\definecolor{darkblue}{rgb}{0,0,0.7} 
\definecolor{darkred}{rgb}{0.9,0.1,0.1}
\definecolor{darkgreen}{rgb}{0,0.5,0}
\newtheorem{thm}{Theorem}[section]
\newtheorem{lem}[thm]{Lemma}
\theoremstyle{remark}
\newtheorem{rem}[thm]{Remark}
\theoremstyle{definition}
\definecolor{asparagus}{rgb}{0.53, 0.66, 0.42}
\renewcommand{\leq}{\leqslant}
\renewcommand{\geq}{\geqslant}
\renewcommand{\subset}{\subseteq}
\newcommand{\E}{\mathbb{E}}
\newcommand{\F}{\mathcal{F}}
\newcommand{\N}{\mathbb{N}}
\newcommand{\1}{\mathbf{1}}
\newcommand{\R}{\mathbb{R}}
\newcommand{\Z}{\mathbb{Z}}
\renewcommand{\P}{\mathbb{P}}
\newcommand{\eps}{\varepsilon}
\renewcommand{\d}{{\mathrm{d}}}
\newcommand{\aeps}{\eps^\alpha}
\newcommand{\Rd}{\R^d}
\newcommand{\km}{k_{\textrm{max}}}
\newcommand{\I}{\mathcal{I}}
\DeclareMathOperator{\capacity}{Cap}
\newcommand{\avsum}{\mathop{\mathpalette\avsuminner\relax}\displaylimits}
\newcommand\avsuminner[2]{%
  {\sbox0{$\m@th#1\sum$}%
   \vphantom{\usebox0}%
   \ooalign{%
     \hidewidth
     \smash{\vrule height\dimexpr\ht0+1pt\relax depth\dimexpr\dp0+1pt\relax}%
     \hidewidth\cr
     $\m@th#1\sum$\cr
   }%
  }%
}
\numberwithin{equation}{section}
\begin{document}

\begin{center}
{\Large Derivation of Darcy's law in randomly perforated domains}\\

\medskip

{A. GIUNTI}
\end{center}

\bigskip

\bigskip

\begin{center}
\begin{minipage}[c]{15cm}
{\small
{\bf Abstract:} We consider the homogenization of a Poisson problem or a Stokes system in a randomly punctured domain with Dirichlet boundary conditions. We assume that the holes are spherical and have random centres and radii. We impose that the average distance between the balls is of size $\eps$ and their average radius is $\eps^{\alpha}$, $\alpha \in (1; 3)$. We prove that, as in the periodic case \cite{Allaire_arma2}, the solutions converge to the solution of Darcy's law (or its scalar analogue in the case of Poisson). In the same spirit of \cite{GH, GHV}, we work under minimal conditions on the integrability of the random radii. These ensure that the problem is well-defined but do not rule out the onset of clusters of holes. }
\end{minipage}
\end{center}

\bigskip

We are interested in the effective behaviour of a Stokes system or a Poisson equation in a bounded domain $D^\eps \subset \R^3$, perforated by many random small holes $H^\eps$. We impose Dirichlet boundary conditions on the boundary of the holes and of the domain. Problems like the one studied in this paper arise mostly in fluid-dynamics where a Stokes system in a punctured domain models the flow of a viscous and incompressible fluid through many disjoint obstacles. We focus on the regime where the effective equation is given by Darcy's law or its scalar analogue in the case of the Poisson problem. For the latter, this corresponds to the case where the average density of harmonic capacity of the holes $H^\eps$ goes to infinity in the limit $\eps \downarrow 0$. In the case of Stokes the same is true, this time with the harmonic capacity being replaced by the so-called \textit{Stokes capacity}. This is a vectorial version of the harmonic capacity where the class of minimizers further satisfies the incompressibility constraint  (see \eqref{stokes.capacity}). 

\smallskip

We construct the randomly punctured domain $D^\eps$ as follows: Given $\alpha \in (1, 3)$ and a bounded $C^{1,1}$-domain $D\subset \R^3$, we define 
\begin{align}\label{holes}
D^\eps:= D \backslash H^\eps, \ \ \ \ \ \ H^\eps:= \bigcup_{z \in \Phi \cap \frac{1}{\eps}D} B_{\eps^\alpha \rho_{z}}(\eps z).
\end{align}
Here, the set of centres $\Phi$ is a Poisson point process of intensity $\lambda >0$ and the set $\frac 1 \eps D :=\{ x \in \R^3 \, \colon \, \eps x \in D \}$. The radii $\mathcal{R}= \{ \rho_z\}_{z\in \Phi} \subset [1; +\infty)$ are independent and identically distributed random variables satisfying for a constant $C< +\infty$
\begin{align}\label{integrability.p}
\E \bigl[ \rho^{\frac{3}{\alpha}} \bigr] \leq C.
\end{align}
This condition is minimal in order to ensure that, $\P$-almost surely and when $\eps$ is small, the set $H^\eps$ does not fully cover the domain $D$, hence implying that $D^\eps = \emptyset$  (see Lemma \ref{zero.one.law}). However, condition \eqref{integrability.p} does not prevent that, with high probability, the balls in $H^\eps$ do overlap.
 
\bigskip

For $\eps >0$ and $D^\eps$ as above, we consider the (weak) solution to either
\begin{align}\label{P.eps.p}
\begin{cases}
-\Delta u_\eps  = f \ \ \ &\text{in $D^\eps$}\\
u_\eps = 0 \ \ \ &\text{on $\partial D^\eps$}
\end{cases}
\end{align}
or to 
\begin{align}\label{P.eps.s}
\begin{cases}
-\Delta u_\eps + \nabla p_\eps = f \ \ \ &\text{in $D^\eps$}\\
\nabla \cdot u_\eps =0 \ \ \ &\text{in $D^\eps$}\\
u_\eps = 0 \ \ \ &\text{on $\partial D^\eps$}
\end{cases}
\end{align}
In the case of the Stokes system, we further assume that
\begin{align}\label{integrability.s}
\E \bigl[ \rho^{\frac{3}{\alpha} + \beta} \bigr] \leq C, \ \ \ \text{for some $\beta > 0$.}
\end{align}
We refer to the next section for a more detailed discussion on what conditions \eqref{integrability.p} and \eqref{integrability.s} entail in terms of the geometric properties of the set $H^\eps$.

\bigskip

It is easy to see that in the case of spherical periodic holes having distance $\eps$ and radius $\eps^\alpha$, $\alpha \in (1, 3]$, the density of harmonic capacity of $H^\eps$ is asymptotically of order $\eps^{-3+ \alpha}$; The same is true in the case of the Stokes capacity. When $\alpha=3$ these limits are thus finite. In the case of the Poisson problem,  the solutions to \eqref{P.eps.p} thus converge to the solution $u \in H^1_0(D)$ to $-\Delta u +\mu u = f$ in $D$, where the constant $\mu>0$ is the limit of the capacity density \cite{Cioranescu_Murat}. Similarly, the limit problem for \eqref{P.eps.s} is given by a \textit{Brinkmann system}, namely a Stokes system in $D$ with no-slip boundary conditions and with the additional term $\tilde \mu u$ in the system of equations \cite{Allaire_arma1}. The term  $\tilde \mu >0$ is as well strictly related to the limit of the Stokes capacity density. We also mention that, for holes that are periodic but not spherical, the term $\tilde \mu$ is a positive-definite matrix. For $\alpha \in (1; 3)$ as in the present paper, the solutions to \eqref{P.eps.p} or \eqref{P.eps.s} need to be rescaled by the factor $\eps^{-3+ \alpha}$ in order to converge to a non-trivial limit. The effective equations, in this case, are either $u = k f$ in $D$ or Darcy's law $u=K( f - \nabla p)$ in $D$ \cite{Allaire_arma2}. Here, $k, K$ are related to the rescaled limit of the density of capacity and admit a representation in terms of a corrector problem solved in the exterior domain $\R^3 \backslash B_1(0)$. 

\bigskip

When $\alpha=1$, namely when the distance between holes and their size have the same order $\eps$, the effective equations for \eqref{P.eps.p} and \eqref{P.eps.s} are as in the case $\alpha \in (1, 3)$; the effective constants $k, K$ obtained in the limit, however, are determined by a corrector problem of different nature. In this case indeed, there is only one microscopic scale $\eps$ and the relative distance between the connected components of the holes $H^\eps$ does not tends to infinity for $\eps \to 0$. This yields that the corrector equations are solved in the periodic cell and not in the exterior domain $\R^3\backslash B_1(0)$ \cite{Allaire_porous}.

\bigskip

For holes that are not periodic, the extremal regimes $\alpha\in \{1, 3\}$ have been rigorously studied both in deterministic and random settings. For $\alpha =3$ we mention, for instance \cite{Brillard1986-1987, Desvillettes2008, Hillairet2018, LEVY198311,MarchenkoKhruslov,papvar.tinyholes,Rubinstein1986,SanchezP82} and refer to the introductions in \cite{GH} and \cite{GHV} for a detailed overview of these results. We stress that the homogenization of \eqref{P.eps.p} and \eqref{P.eps.s} when $H^\eps$ is as in \eqref{holes} with $\alpha =3$ has been studied in the series of papers \cite{GH, GH_pressure, GHV}. These works prove the convergence to the effective equation under the minimal assumption that $H^\eps$ has finite averaged capacity density. There is no additional condition on the minimal distance between the balls in the set of $H^\eps$.

\bigskip

There are many works devoted also to the regime $\alpha=1$. We refer, in particular, to \cite{Beliaev_Koslov} where \eqref{P.eps.p} and \eqref{P.eps.s} are studied for a very general class of stationary and ergodic punctured domains. For these domains, the formulation of the corrector equation for the the effective quantities $k, K$ is solved in the probability space $(\Omega, \F , \P)$ generating the holes. 

\bigskip

There is fewer mathematical literature concerning the homogenization of \eqref{P.eps.p} or \eqref{P.eps.s} in the regime $\alpha \in (1; 3)$. For periodic holes, this has been studied in \cite{Allaire_arma2}. These results have been extended for certain regimes to compressible Navier-Stokes systems \cite{Richard_Sebastian} or to elliptic systems in the context of linear elasticity \cite{Jing_elasticity}. We are not aware of analogous results when the holes $H^\eps$ are not periodic. The present paper considers this problem when $H^\eps$ is random and, in the same spirit of \cite{GH, GHV}, allows that the balls in $H^\eps$ overlap and cluster. 

\bigskip

The main result of this paper is the following: 
\begin{thm}\label{t.main}
Let $\sigma_\eps := \eps^{-\frac{3 - \alpha}{2}}$ and let $H^\eps$ and $D^\eps$ be the random sets defined in \eqref{holes}. 
\begin{itemize}
\item[(a)] Let $u_\eps \in H^1_0(D^\eps)$ solve \eqref{P.eps.p} with $f \in L^q(D)$ for $q \in (2; +\infty]$. Then, if the marked point process $(\Phi, \mathcal{R})$ satisfies \eqref{integrability.p}, for every $p \in [1; 2)$ we have that 
\begin{align}
\lim_{\eps \downarrow 0}\E \bigl[ \int_D | \sigma_\eps^2 u_\eps -  k f|^p \bigr]  = 0, \ \ \ \text{with $k:= (4\pi \lambda \E\bigl[ \rho \bigr])^{-1}$.}
\end{align}
Here, and in the rest of the paper, $\E \bigl[ \, \cdot \, \bigr]$ denotes the expectation under the probability measure for $(\Phi, \mathcal{R})$.

\medskip

\item[(b)] Let $u_\eps \in H^1_0(D^\eps; \R^3)$ solve \eqref{P.eps.s} with $f \in L^q(D; \R^3)$ for $q \in (2 ; +\infty]$. If  $(\Phi, \mathcal{R})$ satisfies \eqref{integrability.p}, then for every $p \in [1; 2)$ we have
\begin{align}
\lim_{\eps \downarrow 0}\E \bigl[ \int_D | \sigma_\eps^2 u_\eps - K (f - \nabla p^*)|^p \bigr] = 0, \ \ \ \text{with $K:= (6\pi \lambda \E\bigl[ \rho \bigr])^{-1}$}
\end{align}
and $p^* \in H^1(D)$ (weakly) solving
\begin{align}
\begin{cases}
-\nabla \cdot( \nabla p^* - f) = 0 \ \ \ &\text{in $D$}\\
(\nabla p^*-f) \cdot \nu = 0 \ \ \ \ &\text{on $\partial D$}
\end{cases}\ \ \ \ \ \ \fint_D p^*= 0.
\end{align}
\end{itemize}
\end{thm}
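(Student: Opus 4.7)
The plan is to adapt the oscillating test function method of Tartar--Allaire \cite{Allaire_arma2} to the random setting, following the spirit of \cite{GH, GHV}. We shall (i) derive uniform energy estimates on $\sigma_\eps u_\eps$ via a capacity-based Poincaré inequality on $D^\eps$, (ii) construct a random oscillating test function $w_\eps$ that vanishes on $H^\eps$ and whose Laplacian concentrates on the effective capacity density $\mu := 4\pi\lambda\E[\rho]$ (respectively $6\pi\lambda\E[\rho]$ in the Stokes case), and (iii) identify the limit equation by inserting $w_\eps\varphi$ into the weak formulation.

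\textbf{Energy bound and compactness.} Testing \eqref{P.eps.p} against $u_\eps$ yields $\|\nabla u_\eps\|_{L^2(D^\eps)}^2 = \int f u_\eps$. A Poincaré-type inequality $\|u_\eps\|_{L^2(D)} \leq C\sigma_\eps^{-1}\|\nabla u_\eps\|_{L^2(D^\eps)}$, which should hold in expectation since the mean harmonic capacity density of $H^\eps$ is of order $\sigma_\eps^2$ under \eqref{integrability.p}, gives $\E\|\sigma_\eps u_\eps\|_{L^2(D)}^2 \leq C\|f\|_{L^2}^2$ and $\E\|\nabla u_\eps\|_{L^2(D^\eps)}^2 \leq C\sigma_\eps^{-2}\|f\|_{L^2}^2$. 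Extending $u_\eps$ by zero on $H^\eps$ and extracting a subsequence, $\sigma_\eps^2 u_\eps \wto U$ weakly in $L^p(D\times\Omega)$ for every $p<2$.

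\textbf{Random corrector.} For each $z\in\Phi\cap\tfrac{1}{\eps}D$, consider the single-ball capacity potential $\phi_{\eps,z}(x) := \min\bigl(1,\eps^\alpha\rho_z/|x-\eps z|\bigr)$ truncated smoothly inside a mesoscopic shell of outer radius $r_\eps$, with $\eps^\alpha \ll r_\eps \ll \eps$, and set $w_\eps := 1 - \sum_z \phi_{\eps,z}$, suitably glued on clusters. By construction $w_\eps = 0$ on $H^\eps$, $w_\eps = 1$ outside $\bigcup_z B_{r_\eps}(\eps z)$, and $w_\eps \to 1$ strongly in $L^p(D)$ for $p<2$. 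Since $-\Delta\phi_{\eps,z}$ carries mass of order $4\pi\eps^\alpha\rho_z$ localised near $\eps z$, a law-of-large-numbers argument on the marked Poisson process $(\Phi,\mathcal{R})$, combined with the equi-integrability provided by \eqref{integrability.p}, yields
\begin{align}
\sigma_\eps^{-2}(-\Delta w_\eps) \,\wto\, \mu \quad \text{as random measures, in expectation.}
\end{align}

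\textbf{Passage to the limit and main obstacle.} For $\varphi\in C^\infty_c(D)$, the function $w_\eps\varphi\in H^1_0(D^\eps)$ is an admissible test function. A Tartar-type compensated compactness argument, combining the equation $-\Delta u_\eps = f$ with the corrector identity $\sigma_\eps^{-2}(-\Delta w_\eps)\to\mu$ and using that both $\sigma_\eps^2 u_\eps\wto U$ and $w_\eps\to 1$ in $L^p$ for $p<2$, leads to $\mu\int U\varphi = \int f\varphi$, hence $U = kf$. The main obstacle lies in the corrector construction: because \eqref{integrability.p} permits clusters of overlapping balls with non-negligible probability, the naive sum $\sum_z\phi_{\eps,z}$ may exceed one on such clusters. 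One must therefore split $\Phi$ into isolated points (where the construction above applies directly) and clustered points (where a joint Dirichlet/capacity problem is solved), and show that the contribution of the clustered points vanishes in expectation using \eqref{integrability.p}. This is the mechanism developed in \cite{GH, GHV}, which must be transposed from the critical scaling $\alpha=3$ to the present subcritical regime $\sigma_\eps^2 = \eps^{\alpha-3}$. For the Stokes system \eqref{P.eps.s}, one additionally corrects $w_\eps\varphi$ by a Bogovskii-type solution of $\nabla\cdot v_\eps = -\nabla\cdot(w_\eps\varphi)$ on the mesoscopic annuli, in order to restore incompressibility; the $L^2$-bound on $v_\eps$ requires the stronger integrability \eqref{integrability.s}. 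The pressure $p^*$ then emerges from the incompressibility $\nabla\cdot U = 0$ and the no-flux condition $U\cdot\nu = 0$ on $\partial D$ via the Helmholtz decomposition of $f$.
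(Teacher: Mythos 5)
Your overall strategy is the right one and matches the paper in outline: capacity-based Poincar\'e inequality, random oscillating test functions vanishing on $H^\eps$, splitting the holes into well-separated ones and clusters, and a Bogovskii-type correction plus the extra integrability \eqref{integrability.s} for the Stokes case. But there is a genuine gap in the core of the argument, and it is precisely the point the paper identifies as the new difficulty relative to Allaire's periodic case.

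The claimed energy bound $\E\|\sigma_\eps^2 u_\eps\|_{L^2(D)}^2 \leq C$ is \emph{false} in this setting. Under a Poisson point process there is a non-negligible probability that regions of $D$ contain very few holes, so the Poincar\'e constant on $D^\eps$ degenerates there, and the family $\{\sigma_\eps^2 u_\eps\}$ is \emph{not} uniformly bounded in $L^2(D)$ (not even in expectation). The paper's Lemma \ref{l.unif.bounds} only gives uniform $L^p$ bounds for $p<2$, and for $p=2$ one picks up an unavoidable $|\log\eps|^3$ factor coming from the tails of the Voronoi cell diameters. As a consequence, your ``Tartar-type compensated compactness'' step, which uses only that $\sigma_\eps^{-2}(-\Delta w_\eps) \wto \mu$ weakly (as measures, in expectation) and $\sigma_\eps^2 u_\eps \wto U$ weakly in $L^p$ with $p<2$, does not close: a weak-$*$ convergence of measures in $H^{-1}$ paired against an $L^p$-weakly-convergent sequence with $p<2$ does not let you pass to the limit in $\langle -\Delta w_\eps; u_\eps\phi\rangle$. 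This is not a technicality. The paper circumvents it with a \emph{quantitative} estimate (Lemma \ref{conv.measure}, proved via the Kohn--Vogelius-type Lemma \ref{Kohn_Vogelius.general} and a CLT-type argument) showing $\|\sigma_\eps^{-2}\mu_\eps - 4\pi\lambda\E[\rho]\|_{H^{-1}} \lesssim \eps^\kappa$ for some $\kappa>0$; this algebraic rate absorbs the $|\log\eps|^3$ loss in the $L^2$ bound for $u_\eps$, and only then does \eqref{conv.Delta} follow. Your law-of-large-numbers plus equi-integrability argument gives a qualitative $o(1)$ at best, which is not sufficient here.

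A related omission: you deduce $U = kf$ from the weak limit, but the theorem asserts \emph{strong} $L^p$ convergence. The paper upgrades weak to strong via an explicit energy computation (the expansion around $w_\eps u_n$ in \eqref{strong.conv.a}), which again uses \eqref{conv.Delta} and hence the quantitative rate. Without it, the upgrade does not go through. So the concrete missing ingredients are: (i) the correct (sub-$L^2$) Poincar\'e inequality with control on the random constant, and (ii) the quantitative $H^{-1}$ rate of convergence of the corrector measures. Both are essential and neither is supplied by your sketch.
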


\smallskip

As mentioned above, condition \eqref{integrability.p} is minimal in order to ensure that the set $D^\eps$ is non-empty for $\P$-almost every realization.  A lower stochastic integrability assumption for the radii, indeed, yields that, in the limit $\eps \downarrow 0$ and $\P$-almost surely $H^\eps$, covers the full set $D$ (see Lemma \ref{zero.one.law} in the next section). By the Strong Law of the Large Numbers, condition \eqref{integrability.p} implies that the density of capacity is almost surely of order $\eps^{-3+\alpha}$ as in the periodic case.  As already remarked in \cite{GH} in the case $\alpha=3$, with \eqref{P.eps.s} we require that the radii satisfy the slightly stronger assumption \eqref{integrability.s}. While \eqref{integrability.p} seems to be the optimal condition in order to control the density of harmonic capacity, the lack of subadditivity of the Stokes capacity calls for a better control on the geometry of the set $H^\eps$. 

\bigskip

The ideas used in the proof of Theorem \ref{t.main} are an adaptation of the techniques used in \cite{Allaire_arma2, Cioranescu_Murat} for the periodic case. They are combined with the tools developed in \cite{GH, GHV} to tackle the case of domains having holes that may overlap. As shown in \cite{Allaire_arma2}, the uniform bounds on the sequences $\{ \sigma_\eps^2 u_\eps\}_{\eps>0}$, $\{ \sigma_\eps \nabla u_\eps\}_{\eps>0}$ are obtained by means of a Poincar\'e's inequality for functions that vanish on $\partial D^\eps$. If $v \in H^1(D^\eps)$, since the function vanishes on the holes $H^\eps$, the constant in the Poincar\'e' s inequality is of order $\sigma_\eps^{-1}<< 1$. If $v \in H^1_0(D)$, this would instead be of order $1$ (dependent on the domain $D$). Note that, as for $\alpha=3$ we have $\sigma_\eps = 1$, there is no gain in using a Poincar'e's inequality in $H^1_0(D^\eps)$ instead of in $H^1_0(D)$ in this regime. In the case of centres of $H^\eps$ that are distributed like a Poisson point process, the is a low probability that some regions of $D^\eps$ have few holes, thus leading to a worse Poincar\'e's constant. This causes the lack of uniform bounds for the family $\{\sigma_\eps^2 u_\eps\}_{\eps>0}$ in $L^2(D)$.  

\bigskip

Equipped with uniform bounds for the rescaled solutions of \eqref{P.eps.p}, one may prove Theorem \ref{t.main}, $(a)$ by constructing suitable oscillating test functions $\{w_\eps \}_{\eps >0}$. These allow to pass to the limit in the equation and identify the effective problem. We stress that a crucial ingredient in these arguments is given by the quantitative bounds obtained in \cite{G} in the case $\alpha =3$. These bounds may indeed also be extended to the current setting sot that the rate of convergence of the measures $-\sigma_\eps^{-2}\Delta w_\eps \in H^{-1}(D)$ is quantified. This allows to control the convergence of the duality term $\langle -\Delta w_\eps ; u_\eps \rangle_{H^{-1}(D) ; H^1_0(D)}$. There is a fine balance the convergence of $-\sigma_\eps^2\Delta w_\eps$ with the right space where we have uniform bounds for $\{ \sigma_\eps^2 u_\eps \}_{\eps >0}$. In contrast with the periodic case, the  unboundedness of $\{ \sigma_\eps^2 u_\eps \}_{\eps >0}$ in $L^2(D)$ requires for a careful study of the duality term above. For the precise statements, we refer to \eqref{conv.Delta} in Lemma \ref{l.oscillating.p} and Lemma \ref{conv.measure}. The same ideas sketched here apply also to the case of solutions to \eqref{P.eps.s}. This time, the oscillating test functions $\{ w_\eps \}_{\eps >0}$ are replaced by the reduction operator $R_\eps$ of Lemma \ref{l.reduction.2}.

\bigskip

\begin{rem}\label{rem.variations} We comment below on some variations and corollaries of Theorem \ref{t.main}:
\begin{itemize}

\item[$(i)$] If $\Phi = \Z^d$ or is a stationary point process satisfying for a finite constant $C < +\infty$
$$
\max_{z_i, z_j \in \Phi} |z_i - z_j |  < C \ \ \ \text{ $\P$-almost surely,}
$$
then the convergence of Theorem \ref{t.main} holds also with $p=2$. In this case, indeed, we may drop the logarithmic factor in the bounds of Lemma \ref{l.unif.bounds}. 

The assumption $\mathcal{R}\subset [1; +\infty)$ may be also weakened to $\mathcal{R}\subset [0; +\infty)$, provided that
$$
\E \bigl[\rho^{-\gamma}\bigr] < +\infty,
$$
for an exponent $\gamma \in (1; +\infty]$. In this case, the convergence of Theorem \ref{t.main} holds in $L^p(D)$ for $p \in [1;  \bar p)$ with $\bar p= \bar p(\gamma) \in [1; 2)$ such that $\bar p(\gamma) \to 2$ when $\gamma \to +\infty$. 

\smallskip

\item[$(ii)$] A careful inspection of the proof of Theorem \ref{t.main} yields that, under assumption \eqref{integrability.s} and for a source $f\in W^{1,\infty}$, the convergences in both $(a)$ and $(b)$ may be upgraded to 
\begin{align}
\E \bigl[ \int_D | \sigma_\eps^2 u_\eps - u|^p \bigr]  \lesssim \eps^\kappa,
\end{align}
for an exponent $\kappa >0$ depending on $\alpha, \beta$.

\smallskip

\item[$(iii)$] The quenched version of Theorem \ref{t.main}, namely the $\P$-almost sure convergence of the families in $L^p(D)$, holds as well provided that we restrict to any vanishing sequence $\{ \eps_j\}_{j\in \N}$ that converges fast enough. For instance, it suffices that $j^{\frac 1 3 +\epsilon} \eps_j \to 0$, $\epsilon >0$. It is a technical but easy argument to observe that, under this assumption, limits \eqref{conv.Delta} of Lemma \ref{l.oscillating.p}  and \eqref{aver.R}-\eqref{meas.R}  of Lemma \ref{l.reduction.2} vanish also $\P$-almost surely.  From these, the quenched version of Theorem \ref{t.main} may be shown as done in the annealed case. To control the limits in \eqref{conv.Delta}, \eqref{aver.R} and \eqref{meas.R} without taking the expectation, one may follow the same lines of the current proof and control most of the terms by the Strong Law of Large Numbers. Condition $j^{\frac 1 3 + \epsilon} \eps_j \to 0$ on the speed of the convergence for $\{\eps_j\}_{j\in\N}$ is needed in order to obtain quenched bounds for the term in \eqref{clt} by means of Borel-Cantelli's Lemma. 

\smallskip

\item[$(iv)$] The analogue of Theorem \ref{t.main} holds also for a general dimension $d \geq 3$ if we consider the values $\alpha \in (1; \frac{d}{d-2})$ and rescale the solutions by $\sigma_\eps^2= \eps^{-\frac{d}{d-2} + \alpha}$. In this case, \eqref{integrability.p} and \eqref{integrability.s} hold with the exponent $\frac{3}{\alpha}$ replaced by $\frac{d}{\alpha}$.

\end{itemize}
\end{rem}

\bigskip

The paper is structured as follows: In the next section we describe the setting and introduce the notation that we use throughout the proofs. Subsection \ref{sub.integrability} is devoted to discussing the minimality of assumption \eqref{integrability.p} and what condition \eqref{integrability.s} implies on the geometry of the holes $H^\eps$. In Section \ref{s.uniform}, we show the uniform bounds on the family $\{ \sigma_\eps^2 u_\eps \}_{\eps >0}$, with $u_\eps$ solving \eqref{P.eps.p} or \eqref{P.eps.s}. In Section \ref{s.thm.a} we argue Theorem \ref{t.main} in case $(a)$, while in Section \ref{s.thm.b} we adapt it to case $(b)$. The proof of case $(b)$ is conceptually similar to the one for $(a)$, but it is technically more challenging. It heavily relies on the geometric properties of the holes implied by condition \eqref{integrability.s}.  Finally, Section \ref{s.appendix} contains the proof of the main auxiliary results used throughout the paper.  

\section{Setting and notation}\label{s.process}
Let $D \subset \R^3$ be an open set having $C^{1,1}$-boundary. We assume that $D$ is star-shaped with respect to a point $x_0 \in \R^3$.  This assumption is purely technical and allows us to give an easier formulation for the set of holes $H^\eps$. With no loss of generality we assume that $x_0=0$.

\bigskip

The process $(\Phi ; \mathcal{R})$ is a stationary marked point process on $\R^3$ having identically and independent distributed marks on $[1 +\infty)$. In other words, $(\Phi ; \mathcal{R})$ may be seen as a Poisson point process on the space $\R^3 \times [1;+\infty)$, having intensity $\tilde \lambda(x, \rho)= \lambda f(\rho)$.  The expectation in \eqref{integrability.p} or \eqref{integrability.s} is therefore taken with respect to the measure $f(\rho) \d \rho$. We denote by $(\Omega; \mathcal{F}, \mathbb{P})$ the probability space associated to $(\Phi, \mathcal{R})$, so that the random sets in \eqref{holes} and the random fields solving \eqref{P.eps.p} or \eqref{P.eps.s} may be written as $H^\eps= H^\eps(\omega)$, $D^\eps=D^\eps(\omega)$ and $u_\eps(\omega; \cdot)$, respectively. The set of realizations $\Omega$ may be seen as the set of atomic measures $\sum_{n \in \N} \delta_{(z_n, \rho_n)}$ in $\R^3 \times [1; +\infty)$ or, equivalently, as the set of (unordered) collections $\{ (z_n , \rho_n) \}_{ n\in \N} \subset \R^3 \times [1; +\infty)$. 

\bigskip

We choose as $\F$ the smallest $\sigma$-algebra such that the random variables $N(B): \Omega \to \N$, $\omega \mapsto  \#\{ \omega \cap B \}$ are measurable for every set $B \subset \R^4$ the Borel $\sigma$-algebra $\mathcal{B}_{\R^{4}}$. Here and throughout the paper, $\#$ stands for the cardinality of the set considered. For every $p \in [1; +\infty)$ we define the space $L^p(\Omega)$ as the space of ($\F$-measurable) random variables $F: \Omega \to \R$ endowed with the norm $\E\bigl[ |F(\omega)|^p \bigr]^{\frac 1 p}$. For $p=+\infty$, we set $L^\infty(\Omega)$ as the space of $\P$-essentially bounded random variables. We denote by $L^p(\Omega \times D)$, $p \in [1; +\infty)$, the space of random fields $F: \Omega \times \R^3 \to \R$ that are measurable with respect to the product $\sigma$-algebra and such that $\E \bigl[ \int_D |F(\omega, x)|^p \d x \bigr]^{\frac 1 p} < +\infty$. The spaces $L^p(\Omega), L^p(\Omega \times \R^3)$ are separable for $p \in [1, +\infty)$ and reflexive for $p \in (1, +\infty)$ (see e.g. \cite{Bruckner_Thomson}[Section 13,4]). The same definition, with obvious modifications, holds in the case of the target space $\R$ replaced by $\R^3$.

\bigskip

We often appeal to the Strong Law of Large Numbers (SSLN) for averaged sums of the form
$$
\#(\Phi \cap B_R)^{-1} \sum_{z\in \Phi \cap B_R} X_z,
$$
where $\{X_z \}_{z\in \Phi^\eps(D)}$ are identically distributed random variables that have sufficiently decaying correlations. Here, we send the radius of the ball $B_R$ to infinity. It is well-known that such results hold and we refer to \cite{GHV}[Section 5] for a detailed proof of the result that is tailored to the current setting.

\subsection{Notation}
We use the notation $\lesssim$ or $\gtrsim$ for $\leq C$ or $\geq C$ where the constant depends only on $\alpha$, $\lambda$, $D$ and, in case $(b)$, also on $\beta$ in \eqref{integrability.s}. Given a parameter $p \in \R$, we use the notation $\lesssim_p$ if the implicit constant also depends on the value $p$. For $r>0$, we write $B_r$ for the ball of radius $r$ centred in the origin of $\R^3$. We denote by $\langle \, \cdot \, ; \, \cdot \, \rangle$ the duality bracket between the spaces $H^{-1}(D)$ and $H^1_0(D)$.

\smallskip

When no ambiguity occurs, we skip the argument $\omega \in \Omega$ in all the random objects considered in the paper. If $(\Phi ; \mathcal{R})$ is as in the previous subsection, for a set $A \subset \R^d$, we define
 \begin{align}\label{psi.eps}
\Phi^\eps(A):= \bigl\{ z \in \Phi \, \colon \, \eps z \in A  \bigr\}, \ \ \ N^\eps(A):= \#\Phi^\eps(A).
 \end{align}
 For $x \in \R^3$, we define the random variables
 \begin{align}\label{distance}
 d_x:= \frac 1 2 \min_{z \in \Phi \atop z \neq x} |z- x|, \ \ \ R_x:= \min \bigl\{ d_x, \frac 1 2 \bigr\},\ \ \ d_{x,\eps}:= \eps d_x, \ \ \ R_{\eps,x}:= \eps R_x.
 \end{align}

\subsection{On the assumptions on the radii}\label{sub.integrability}
In this subsection we discuss the choice of assumptions \eqref{integrability.p} and \eqref{integrability.s} in Theorem \ref{t.main}.  We postpone to the Appendix the proofs of the statements. The next result states that assumption \eqref{integrability.p} is sufficient to have only microscopic holes whose size vanishes in the limit $\eps \downarrow 0$. Moreover, it is also necessary in order to have that holes $H^\eps$ do not cover the full domain $D$.
{
\begin{lem}\label{zero.one.law}
The following conditions are equivalent:
\begin{itemize}
\item[(i)] The process satisfies \eqref{integrability.p};
\item[(ii)] For $\P$-almost every realization and for every $\eps$ small enough the set $D^\eps \neq \emptyset$.
\end{itemize}
Furthermore, $(i)$( or $(ii)$) implies that for $\P$-almost realization $\lim_{\eps \downarrow 0}|D^\eps| =|D|$.
\end{lem}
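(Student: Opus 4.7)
The plan is to reduce the equivalence $(i)\Leftrightarrow (ii)$ (together with the volume statement) to a quantitative control on $\E|H^\eps\cap D|$ under $(i)$, and to a divergence-of-mean argument in the converse direction. Fix a slight neighbourhood $D'\supset D$ and a truncation parameter $M>0$. Splitting $H^\eps$ into the sub-family $H^{\eps,\le M}$ of balls with $\rho_z\le M$ and the complementary event $E_M=\{\exists z\in \Phi\cap \eps^{-1}D'\colon \rho_z>M\}$, I have $|H^\eps\cap D|\le |H^{\eps,\le M}|+|D|\mathbf{1}_{E_M}$. The Mecke formula and the interpolation $\rho^3\mathbf{1}_{\rho\le M}\le M^{3-3/\alpha}\rho^{3/\alpha}$, combined with \eqref{integrability.p}, yield
\[
\E|H^{\eps,\le M}|\lesssim \eps^{3(\alpha-1)}M^{3-3/\alpha},\qquad \P(E_M)\le \lambda\eps^{-3}|D'|\P(\rho>M),
\]
while dominated convergence applied to $\rho^{3/\alpha}\mathbf{1}_{\rho>M}$ upgrades Markov's inequality to $g(M):=M^{3/\alpha}\P(\rho>M)\to 0$ as $M\to\infty$.

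Balancing the two contributions through the choice $M=\eps^{-\alpha}/n$ turns the first bound into the $\eps$-independent quantity $n^{-(3-3/\alpha)}$ and the second into $n^{3/\alpha}g(\eps^{-\alpha}/n)$, which vanishes as $\eps\downarrow 0$ for fixed $n$. Sending first $\eps\downarrow 0$ and then $n\to +\infty$ gives $\E|H^\eps\cap D|\to 0$, hence $|D^\eps|\to |D|$ in $L^1(\Omega)$ and in probability; in particular $D^\eps\neq \emptyset$ with probability tending to $1$. The $\P$-almost sure upgrade along a continuous $\eps\downarrow 0$ is then obtained by specialising the estimate to the geometric subsequence $\eps_k=2^{-k}$ with a slowly growing $n_k\to +\infty$, so that Borel--Cantelli applies to give a.s.\ convergence along $(\eps_k)$, and by controlling $|H^\eps\cap D|$ uniformly on the small intervals $[\eps_{k+1},\eps_k]$ via a re-run of the same split with the threshold adapted to $\eps_{k+1}$.

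For the converse, I argue by contraposition: assume $\E[\rho^{3/\alpha}]=+\infty$. Writing $\E[\rho^{3/\alpha}]=\tfrac{3}{\alpha}\int_0^\infty t^{3/\alpha-1}\P(\rho>t)\,\d t$ and Cauchy-condensing, the series $\sum_k 2^{3k}\P(\rho>\mathrm{diam}(D)\cdot 2^{k\alpha})$ diverges. Along $\eps_k=2^{-k}$ this is, up to universal constants, the expected number of balls with centre in $\eps_k^{-1}D$ and radius $\ge \mathrm{diam}(D)$; any such ball already covers $D$. To turn divergence of the mean into almost sure occurrence infinitely often, I decouple the events across scales by partitioning the marks into disjoint dyadic shells and using the independence of the Poisson process on disjoint regions of $\R^3\times [1,+\infty)$: the resulting independent family satisfies the hypothesis of Borel--Cantelli's second lemma, giving $D^{\eps_k}=\emptyset$ for infinitely many $k$ almost surely and thereby violating $(ii)$.

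The main obstacle is precisely the $\P$-almost sure statement along continuous $\eps\downarrow 0$ in direction $(i)\Rightarrow (ii)$: since $\{H^\eps\}_\eps$ is not monotone in $\eps$, a careful interpolation between consecutive dyadic scales is needed, together with quantitative Borel--Cantelli bounds obtained by tuning $n_k$. The analogous difficulty in the converse---that the different $\eps_k$ share the same $\Phi$---is dealt with by the dyadic-in-$\rho$ decoupling, which is the second delicate point of the proof.
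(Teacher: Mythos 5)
Your backward direction $(ii)\Rightarrow(i)$ follows essentially the same strategy as the paper: contraposition, Cauchy condensation / layer cake to show divergence of a series, independence of the Poisson process on disjoint regions of $\R^3\times[1,+\infty)$, and Borel--Cantelli's second lemma. The only difference is cosmetic: you decouple by placing the marks in disjoint dyadic shells, while the paper decouples by placing the \emph{centres} in disjoint spatial annuli $\frac{1}{\eps_j}D\setminus\frac{1}{\eps_{j-1}}D$. Both are valid. (The paper additionally upgrades ``$D^{\eps_k}=\emptyset$ infinitely often'' to ``$D^\eps=\emptyset$ for all small $\eps$ a.s.'' by an interpolation between dyadic scales; for the equivalence as stated your weaker conclusion already negates $(ii)$.)

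The forward direction $(i)\Rightarrow(ii)$ takes a genuinely different route, and there is a gap in the upgrade from $L^1$-convergence to $\P$-a.s.\ convergence. Your expectation bound
\[
\E|H^\eps\cap D|\;\lesssim\; n^{-(3-\frac 3 \alpha)}\;+\;n^{\frac 3 \alpha}\,g\bigl(\eps^{-\alpha}/n\bigr),
\qquad g(M):=M^{\frac 3 \alpha}\,\P(\rho>M),
\]
is correct, and sending first $\eps\downarrow 0$ and then $n\to\infty$ does give $\E|H^\eps\cap D|\to 0$. But the a.s.\ upgrade via Markov plus Borel--Cantelli along $\eps_k=2^{-k}$ requires both $\sum_k n_k^{-(3-3/\alpha)}<\infty$ and $\sum_k n_k^{3/\alpha}\,g(\eps_k^{-\alpha}/n_k)<\infty$ simultaneously. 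The first forces $n_k$ to grow at least like a power of $k$; but then the second fails in general, because under the bare hypothesis $\E[\rho^{3/\alpha}]<+\infty$ the only information on $g$ is $g(M)\to 0$, with \emph{no} rate. Concretely, if $\P(\rho>M)\sim M^{-3/\alpha}/\log M$, then $g(\eps_k^{-\alpha}/n_k)\sim 1/k$, and the prefactor $n_k^{3/\alpha}$ makes the sum diverge for every admissible choice of $n_k$. So ``tuning $n_k$'' cannot close the argument. The paper avoids the issue entirely: it splits the radii into three ranges and applies the Strong Law of Large Numbers for marked Poisson processes directly, which yields for each fixed threshold $N$ that
\[
\eps^{3}\sum_{z\in\Phi^\eps(D)}\rho_z^{3/\alpha}\,\1_{\rho_z>N}\;\longrightarrow\;\lambda|D|\,\E\bigl[\rho^{3/\alpha}\1_{\rho>N}\bigr]\qquad\P\text{-a.s.},
\]
and then a diagonal argument in $N$ gives $|H^\eps\cap D|\to 0$ a.s.\ with no need for a subsequence, for Markov, or for any rate on the tail of $\rho$. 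You would need to replace your Borel--Cantelli step with this (or an equivalent ergodic-theorem) argument; your expectation calculation alone establishes convergence in probability but not the almost-sure statement required by $(ii)$ and by the final claim on $|D^\eps|$.
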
}

\smallskip

In the following result we provide the geometric information on $H^\eps$ that may be inferred by strengthening condition \eqref{integrability.p} to \eqref{integrability.s}. Roughly speaking, the next lemma tells that, under condition \eqref{integrability.s}, we have a control on the maximum number of holes \textit{of comparable size} that intersect. More precisely, we may discretize the range of the size of the radii $\{ \rho_z\}_{z\in \Phi^\eps(D)}$ and partition the set of centres $\Phi^\eps(D)$ according to the order of magnitude of the associated radii. The next statement says that there exists an $M\in \N$ (that is independent from the realization $\omega \in \Omega$) such that, provided that the step-size of the previous discretization is small enough, each sub-collection contains at most $M$ holes that overlap when dilated by a factor $4$. This result allows to treat also the case of the Stokes system in Theorem \ref{t.main}, (b) and motivates the need of the stronger assumption \eqref{integrability.s} in that setting.

\begin{lem}\label{l.borel.cantelli}
Let $(\Phi, \mathcal{\R})$ satisfy \eqref{integrability.s}. Then:
\begin{itemize}

\item[(i)] There exists $\kappa = \kappa(\alpha, \beta) > 0$, $\km= \km(\alpha, \beta), M=M(\alpha, \beta) \in \N$ such that for $\P$-almost every realization and for every $\eps$ small enough it holds
\begin{align}\label{max.radii}
\sup_{z \in \Phi^\eps(D)} \aeps\rho_z \leq \eps^{\kappa}
\end{align}
and we may rewrite  
\begin{equation}\label{partition.magnitude}
H_\eps= \bigcup_{i=1}^{\km} \bigcup_{ z \in I_{i,\eps}} B_{\aeps\rho_z}(\eps z),  \ \ \ \ \inf_{z \in I_{\eps,i}} \aeps \rho_z \geq \eps^\kappa  \sup_{z \in I_{\eps,i-2}} \aeps \rho_z \ \ \ \text{for $i=1, \cdots, \km$}
\end{equation}
such that for every $i=1, \cdots \km$
\begin{align}\label{no.overlapping.borel}
\{ B_{4\aeps\rho_z}(\eps z)\}_{z \in I_{i, \eps} \cup I_{i-1,\eps}}, \ \ \text{contains at most $M$ elements that intersect.}
\end{align}

\smallskip

\item[(ii)] For every $\delta > 0$ there exists $\eps_0=\eps_0(\delta)> 0$ and a set $B \in \mathcal{F}$ such that $\P(B) \geq 1-\delta$ and for every $\omega \in B$ and $\eps \leq \eps_0$ inequality \eqref{max.radii} holds and there exists a partition of $H^\eps$ satisfying \eqref{partition.magnitude}-\eqref{no.overlapping.borel}.

\end{itemize}
\end{lem}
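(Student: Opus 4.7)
The plan is to deduce both statements from the tail bound $\P(\rho > t) \lesssim t^{-3/\alpha - \beta}$ implied by \eqref{integrability.s}, combined with first-moment Markov estimates and Borel--Cantelli along the geometric subsequence $\eps_k := 2^{-k}$. The surplus $\alpha\beta > 0$ in the tail exponent is what drives the summability in every step.

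\medskip

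\textbf{Step 1 (maximum radius).} A union bound over the $\lesssim \eps_k^{-3}$ centres of $\Phi^{\eps_k}(D)$, combined with the tail estimate, yields
\[
\P\Bigl( \max_{z \in \Phi^{\eps_k}(D)} \rho_z > \eps_k^{\kappa - \alpha} \Bigr) \lesssim \eps_k^{-3}\, \eps_k^{(\alpha - \kappa)(3/\alpha + \beta)} = \eps_k^{\alpha\beta - \kappa(3/\alpha + \beta)}.
\]
Choosing $\kappa > 0$ small enough (depending on $\alpha, \beta$) makes the exponent positive and the series summable, so Borel--Cantelli gives \eqref{max.radii} along $\{\eps_k\}$ almost surely. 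To pass to all small $\eps$, I would use $\Phi^\eps \subseteq \Phi^{\eps_{k+1}}$ for $\eps \in [\eps_{k+1}, \eps_k]$ (which follows from the star-shapedness of $D$) and the monotonicity of $\eps \mapsto \eps^{\kappa - \alpha}$, absorbing the resulting $O(1)$ constants into a slightly smaller $\kappa$.

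\medskip

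\textbf{Step 2 (partition and overlap).} With $\kappa$ from Step 1, set $\kappa' := \kappa/3$ and $\km := \lceil \alpha/\kappa' \rceil$, and define
\[
I_{i,\eps} := \bigl\{ z \in \Phi^\eps(D) \,:\, \eps^\alpha \rho_z \in (\eps^{\kappa'(i+1)}, \eps^{\kappa' i}] \bigr\}, \qquad i = 1, \ldots, \km.
\]
By \eqref{max.radii} and $\rho_z \geq 1$ this is a partition of $\Phi^\eps(D)$, and the separation in \eqref{partition.magnitude} is automatic since $\inf_{I_i} \eps^\alpha \rho_z = \eps^{\kappa'(i+1)} = \eps^{3\kappa'} \sup_{I_{i-2}} \eps^\alpha \rho_z = \eps^\kappa \sup_{I_{i-2}} \eps^\alpha \rho_z$. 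For the overlap bound \eqref{no.overlapping.borel} I estimate the expected number of $(M+1)$-tuples of centres in $I_{i,\eps} \cup I_{i-1,\eps}$ whose $4$-dilated balls share a common point. By the Slivnyak--Mecke formula this expectation factorizes into a spatial integral (pairwise intersection forces pairwise distance $\lesssim \eps^{\kappa'(i-1)-1}$, contributing $\eps^{-3}(\eps^{\kappa'(i-1)-1})^{3M}$) and the independent mark probability $(C\, \eps^{(\alpha - \kappa'(i+1))(3/\alpha + \beta)})^{M+1}$. Collecting exponents yields an upper bound $\eps^{E_i}$ with $E_i \geq c M - O(1)$ for some $c = c(\alpha, \beta, \kappa') > 0$; choosing $M = M(\alpha, \beta)$ large makes $E_i > 0$ uniformly in $i \leq \km$. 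Markov, summation over the finitely many $i$, and Borel--Cantelli along $\{\eps_k\}$ then yield \eqref{no.overlapping.borel} almost surely at the scales $\eps_k$; the extension to arbitrary $\eps \in [\eps_{k+1}, \eps_k]$ uses that the balls at $\eps$ are geometrically comparable to those at $\eps_{k+1}$ up to a factor $2$, which can be absorbed into the dilation constant $4$.

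\medskip

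\textbf{Part (ii) and main obstacle.} Part (ii) is immediate from (i) by continuity of measure: the $\F$-measurable sets
\[
B_{\eps_0} := \{\omega \in \Omega \,:\, \text{\eqref{max.radii} and the partition conditions hold for every } \eps \leq \eps_0\}
\]
are monotone in $\eps_0$ and increase to a set of full measure, so $\P(B_{\eps_0}) \geq 1 - \delta$ for $\eps_0 = \eps_0(\delta)$ sufficiently small. The main obstacle I anticipate is the uniform-in-$i$ positivity of $E_i$ in Step 2: the within-group spread $\eps^{-\kappa'}$ produces an $O(\kappa' M)$ loss in the exponent that must be absorbed either by the tail surplus $\alpha\beta(M+1)$ (the binding constraint at small $i$, i.e.\ large radii) or by the geometric gain $3M(\alpha - 1)$ from the shrinking ball size (the binding constraint at $i \sim \km$, i.e.\ small radii), depending on whether $\beta$ is above or below $3 - 3/\alpha$. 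It is precisely this balance that forces the stronger moment assumption \eqref{integrability.s} in place of the minimal \eqref{integrability.p}.
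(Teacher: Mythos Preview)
Your proposal is correct and follows essentially the same route as the paper: partition the radii into $\eps^{\kappa'}$-adic magnitude classes, bound the probability that $M+1$ balls from two adjacent classes overlap via a first-moment estimate driven by the tail surplus $\alpha\beta$ in \eqref{integrability.s}, apply Borel--Cantelli along a geometric sequence $\eps_k=2^{-k}$, and interpolate to general $\eps$ by absorbing the $O(1)$ ratio into the dilation constant; part~(ii) then follows from~(i) by monotone continuity of measure, exactly as you wrote.

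The only stylistic differences are that the paper folds your Step~1 into the chain estimate as the case $M=1$ (rather than treating the maximal radius by a separate union bound), and phrases the combinatorial estimate as a recursive bound $\P(A_{k,\eps,M})\lesssim p_0\,p_1^{M-1}$ rather than via the Slivnyak--Mecke formula you invoke; these are the same computation. Your explicit discussion of the two binding constraints (tail surplus at large radii versus geometric gain $3M(\alpha-1)$ at small radii) is a nice clarification of why the argument needs $\beta>0$.
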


\section{Uniform bounds}\label{s.uniform}
In this section we provide uniform bounds for the family $\{ \sigma_\eps^2 u_\eps \}_{\eps >0}$ and $\{ \sigma_\eps \nabla u_\eps \}_{\eps >0}$. We stress that, as in \cite{Allaire_arma2}, this is done by relying on a Poincar\'e's inequality for functions that vanish in the holes $H^\eps$. The order of magnitude of the typical size (i.e. $\eps^\alpha$) and distance (i.e. $\eps$) of the holes yields that the Poincar\'e's constant scales as the factor $\sigma_\eps$ introduced in Theorem \ref{t.main}. This, combined with the energy estimate for \eqref{P.eps.p} or \eqref{P.eps.s}, allows to obtain the bounds on the rescaled solutions.  We mention that the next results contain both annealed and quenched uniform bounds. The quenched versions are not needed to prove Theorem \ref{t.main}, but may be used to prove the quenched analogue described in Remark \ref{rem.variations}, (iii).

\begin{lem}\label{l.unif.bounds}
Let $u_\eps$ be is as in Theorem \ref{t.main}. Then for every $p \in [1; 2)$
\begin{align}\label{energy}
&\limsup_{\eps \downarrow 0}\E \bigl[ \int_D |\sigma_\eps \nabla u_\eps|^2 + |\log\eps|^{-3}|\sigma_\eps^2 u_\eps|^2 + \int_D |\sigma_\eps^2 u_\eps|^p  \bigr] \lesssim_p 1.
\end{align}
 Furthermore, for $\P$-almost every realization, the sequences $\{ \sigma_\eps^{2} u_\eps \}_{\eps >0}$ and $\{\sigma_\eps \nabla u_\eps \}_{\eps>0}$ are bounded in $L^p(D)$, $p \in (1; 2)$, and in $L^2(D)$, respectively.
\end{lem}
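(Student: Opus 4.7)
The strategy follows \cite{Allaire_arma2}: test \eqref{P.eps.p} (respectively \eqref{P.eps.s}, where the pressure term vanishes by incompressibility) with $u_\eps$ itself and combine the resulting energy identity with a random Poincar\'e-type inequality for functions in $H^1_0(D^\eps)$. Using $f \in L^q(D)$ with $q>2$ and H\"older, the energy estimate reduces to
\begin{align*}
\|\nabla u_\eps\|_{L^2(D)}^2 \,\leq\, \|f\|_{L^q(D)} \, \|u_\eps\|_{L^{q'}(D)}, \qquad q' = \tfrac{q}{q-1} \in (1,2).
\end{align*}
Everything then rests on a good control of $\|v\|_{L^r(D)}$ in terms of $\|\nabla v\|_{L^2(D)}$ for $v \in H^1_0(D^\eps)$ and $r \in [1,2]$, together with sharp moment bounds on the associated random Poincar\'e constant.

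To obtain the Poincar\'e inequality, I would partition $D$ into the Voronoi cells $\{V_z\}_{z \in \Phi^\eps(D)}$ of the centres and, on each cell, invoke the classical capacitary Poincar\'e estimate for the hole $B_{\eps^\alpha\rho_z}(\eps z)$. Since the harmonic capacity in $\R^3$ of a ball of radius $\eps^\alpha \rho_z$ is proportional to $\eps^\alpha \rho_z$ and $\rho_z \geq 1$, this yields $\int_{V_z} |v|^2 \lesssim (|V_z|/(\eps^\alpha \rho_z)) \int_{V_z} |\nabla v|^2$. Combining with H\"older in each cell and summing via H\"older in the $z$-variable produces an $L^r$-Poincar\'e constant $c_{r,\eps}(\omega)$ expressed as an explicit functional of the Voronoi volumes $\{|V_z|\}$ and the radii $\{\rho_z\}$, of typical size $\sigma_\eps^{-1}$ when $|V_z|\sim\eps^3$ and $\rho_z \sim 1$.

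The technical core is the moment estimate: for $r \in [1,2)$, Jensen's inequality applied to the $\tfrac{r}{2}$-th power of the corresponding sum, combined with the integrability \eqref{integrability.p} of $\rho$ and a Poisson Strong Law of Large Numbers in the spirit of \cite[Sec.~5]{GHV}, gives $\E[(\sigma_\eps c_{r,\eps})^r] \lesssim_r 1$ uniformly in $\eps$. Feeding this into $\|\nabla u_\eps\|_{L^2} \leq c_{q',\eps}\|f\|_{L^q}$ and $\|u_\eps\|_{L^p} \leq c_{p,\eps}\|\nabla u_\eps\|_{L^2}$, and applying Cauchy--Schwarz for the joint moments, produces the uniform $L^2$ bound on $\sigma_\eps \nabla u_\eps$ (taking $r = q'$) and the uniform $L^p$ bound on $\sigma_\eps^2 u_\eps$ for $p \in [1,2)$ (combining $r=q'$ and $r=p$).

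The main obstacle is the endpoint $r=2$, where the Jensen exponent $(2-r)/2$ becomes zero: the $L^2$ Poincar\'e constant degenerates to $\max_{z \in \Phi^\eps(D)} (|V_z|/(\eps^\alpha \rho_z))^{1/2}$, and $\max_z |V_z|/\eps^3$ admits only a $|\log\eps|$-type tail by union bound over the $\eps^{-3}$ cells. Tracking this through the estimates yields the $|\log\eps|^3$ weight in \eqref{energy} and explains why the convergence of Theorem \ref{t.main} cannot be upgraded to $p=2$. The quenched version stated at the end of the lemma follows from the same chain of estimates, with the moment bounds replaced by a Borel--Cantelli argument along sub-sequences $\{\eps_j\}$ as in Remark \ref{rem.variations}(iii).
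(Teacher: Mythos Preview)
Your proposal is correct and follows essentially the same route as the paper: the paper also tests the equation with $u_\eps$, uses H\"older with exponent $q' = q/(q-1) < 2$, and then invokes a Poincar\'e-type inequality (Lemma~\ref{l.poincare}) obtained by working cell-by-cell in the Voronoi tessellation of the rescaled Poisson cloud, summing via H\"older in the $z$-variable for $p<2$, and handling $p=2$ by a split according to whether the Voronoi diameter $r_z$ exceeds $|\log\eps|$.

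One small correction on the quenched statement: the paper does \emph{not} pass through Borel--Cantelli along a subsequence. The random Poincar\'e constant $C_\eps(p)$ is an explicit averaged sum $\bigl(\eps^3 \sum_{z\in A^\eps} r_z^{6/(2-p)}\bigr)^{(2-p)/2}$ over the Voronoi diameters, and the Strong Law of Large Numbers (together with the exponential tails of $r_z$) gives $\limsup_{\eps\downarrow 0} C_\eps(p) \lesssim_p 1$ $\P$-almost surely for the \emph{full} family $\eps\downarrow 0$, not only along a sparse subsequence. This is what yields the quenched boundedness of $\{\sigma_\eps^2 u_\eps\}_{\eps>0}$ and $\{\sigma_\eps \nabla u_\eps\}_{\eps>0}$ as stated. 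The Borel--Cantelli/subsequence mechanism of Remark~\ref{rem.variations}(iii) is reserved for the finer quenched \emph{convergence} statement, where one must control a CLT-type fluctuation term that does not reduce to a SLLN.
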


This, in turn, is a consequence of 
\begin{lem}\label{l.poincare}
For every  $p \in [1; 2]$ and for every $v \in H^1_0(D^\eps)$ we have 
\begin{align}\label{poincare.Lp}
\bigl( \int_D |\sigma_\eps v|^p \bigr)^{\frac 1 p} &\lesssim C_\eps(p) \bigl(\int_D |\nabla v|^2\bigr)^{\frac 1 2} \times \begin{cases}
1 \ \ &\text{for $p \in [1; 2)$}\\
|\log\eps|^3 \ \ \ &\text{if $p=2$,}
\end{cases}
\end{align}
where the random variables $\{ C_\eps(p) \}_{\eps> 0}$ satisfy
\begin{equation}
\begin{aligned}\label{rv.bounds}
&\limsup_{\eps \downarrow 0} C_\eps(p) \lesssim_p 1 \ \ \ &\text{$\P$-almost surely,}\\
&\limsup_{\eps \downarrow 0} \E \bigl[C_\eps^q(p) \bigr] \lesssim_p 1 \ \ &\text{for every  $q \in [1; +\infty)$.}
\end{aligned}
\end{equation}
\end{lem}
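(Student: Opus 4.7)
\emph{Strategy.} My plan is to reduce \eqref{poincare.Lp} to a collection of three-dimensional capacitary Poincar\'e inequalities --- one per centre of $\Phi^\eps(D)$ --- and to control the resulting random sum with the SLLN and moment/extreme-value estimates for the Poisson point process. The local fact I will use is that for $u\in H^1(B_R)$ vanishing on a concentric ball $B_r$ with $r\le R$,
\[
\int_{B_R} u^{2}\,dx \;\lesssim\; \frac{R^{3}}{r}\int_{B_R}|\nabla u|^{2}\,dx \qquad (\text{plus a boundary-trace term on }\partial B_R),
\]
which reflects the asymptotics $\mathrm{cap}(B_r,\R^3)\sim r$. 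Extending $v\in H^1_0(D^\eps)$ by $0$ to $\R^3$, the extension vanishes on $\partial D$ and on every hole $B_{\eps^\alpha\rho_x}(\eps x)$, making the display above applicable inside a microscopic ball around each centre $\eps x$.

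\emph{Cell decomposition and the case $p=2$.} First, I would work with the Voronoi partition $\{V_x\}_{x\in\Phi^\eps(D)}$ of $D$ associated to $\{\eps x\}$ and, for each $x$, its circumradius $r_x := \max_{y\in V_x}|y-\eps x|$, which satisfies $r_x\le \eps\widetilde d_x$ for a local neighbour-distance random variable $\widetilde d_x$ with moments of all orders under the Poisson law. Applying the local capacitary inequality on each $B_{r_x}(\eps x)$ (which contains the hole in the typical regime $\eps^\alpha\rho_x\le r_x$), using $\rho_x\ge 1$ and $\eps^3/\eps^\alpha = \sigma_\eps^{-2}$, and summing over $x$ with the boundary-trace terms handled by a standard partition-of-unity/trace argument, should yield
\[
\int_D |\sigma_\eps v|^{2}\,dx \;\lesssim\; \sup_{x\in\Phi^\eps(D)}\!\Bigl(\frac{r_x}{\eps}\Bigr)^{3} \int_D |\nabla v|^{2}\,dx.
\]
For a Poisson point process in $D/\eps$ an elementary extreme-value argument gives $\sup_x(r_x/\eps)\lesssim |\log\eps|^{1/3}$ both $\P$-a.s.\ and in $L^q(\Omega)$ for every finite $q$, so \eqref{poincare.Lp} at $p=2$ follows, with ample room relative to the $|\log\eps|^{3}$ in the statement.

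\emph{The case $p<2$ and main obstacle.} For $p\in[1,2)$, I would apply H\"older inside each cell, $\int_{V_x}|v|^{p}\le |V_x|^{1-p/2}(\int_{V_x}v^{2})^{p/2}$, insert the local capacitary bound, and apply H\"older once more in the sum over $x$ with conjugate exponents $(2/p,\,2/(2-p))$. A direct bookkeeping of powers of $\eps$ (which cancel exactly against the $\sigma_\eps$ rescaling) reduces the problem to showing that
\[
\eps^{3}\sum_{x\in\Phi^\eps(D)}\widetilde d_x^{\,6/(2-p)}\,\rho_x^{-p/(2-p)}
\]
stays bounded as $\eps\downarrow 0$, both $\P$-a.s.\ and in $L^{q}(\Omega)$ for every $q$. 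This will follow from the SLLN together with the independence of $\widetilde d_x$ and $\rho_x$: the first factor has all Poisson moments finite, and $\rho_x\ge 1$ forces $\rho_x^{-p/(2-p)}\le 1$. This prefactor will play the role of $C_\eps(p)$, and \eqref{rv.bounds} follows at once. The main technical obstacle is the treatment of the ``bad'' cells where $\eps^\alpha\rho_x>r_x$, i.e.\ where the hole sticks out of its Voronoi cell because of an atypically large radius or an atypically small inter-particle spacing, and the naive local inequality fails. Their contribution will be absorbed into $C_\eps(p)$ by a Markov-type argument based on the moment bound \eqref{integrability.p} and the observation that $v$ vanishes identically on the holes. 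Notably, the divergence $6/(2-p)\to\infty$ as $p\uparrow 2$ is precisely what forces the logarithmic correction at the endpoint $p=2$.
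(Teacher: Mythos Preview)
Your approach is essentially the paper's: Voronoi tessellation of $\{\eps z\}_{z\in\Phi}$, a local capacitary Poincar\'e inequality in each cell, and H\"older with exponents $(2/p,\,2/(2-p))$ for $p<2$, leading to $C_\eps(p)^p=\bigl(\eps^{3}\sum_{z} r_z^{6/(2-p)}\bigr)^{(2-p)/2}$ with $r_z$ the unrescaled Voronoi diameter. Two simplifications in the paper are worth adopting. First, the local inequality is posed directly on the convex cell: for $V$ convex with $V\subset B_r$ and $u\in H^1(V\setminus B_s)$ vanishing on $\partial B_s$, spherical coordinates give $\|u\|_{L^q(V\setminus B_s)}\lesssim r^{3/q}s^{-1/2}\|\nabla u\|_{L^2}$; since the cells partition $D$ there are no boundary-trace terms and no partition of unity is needed. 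Second, what you identify as the main obstacle is not one: since $\rho_z\ge 1$, one simply takes $s=\eps^{\alpha}$ (rather than $\eps^{\alpha}\rho_z$) in every cell, and the inequality then holds uniformly with no case distinction --- the factor $\rho_z^{-p/(2-p)}$ never enters and there are no ``bad cells'' to absorb. For $p=2$ the paper proceeds slightly differently from your $\sup$-bound: it splits the sum according to $r_z\lessgtr|\log\eps|$ and controls the large-cell contribution via the ordinary Poincar\'e inequality in $H^1_0(D)$ combined with the stretched-exponential tail of $r_z$, which produces the $|\log\eps|^{3}$; your extreme-value argument is also valid and in fact sharper.
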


\smallskip

\begin{proof}[Proof of Lemma \ref{l.poincare}] 
As first step, we argue that the following Poincar\'e's inequality holds: Let $V$ be a convex domain. Assume that $V \subset B_r$ for some $r> 0$. Let $s < r$.  Then, for every $q \in [1; 2]$ and $u \in H^1(V \backslash B_s)$ such that $u=0$ on $\partial B_s$ it holds
\begin{align}\label{Poincare.easy}
\bigl(\int_{V \backslash B_s} |u|^q \bigr)^{\frac 1 q}\lesssim \frac{r^{\frac 3 q}}{s^{\frac 1 2}} \bigl(\int_{V \backslash B_s} |\nabla u|^2\bigr)^{\frac 1 2}.
\end{align}

The proof of this result is standard and may be easily proven by writing the integrals in spherical coordinates. We stress that the assumptions on $V$ allows to write the domain $V \backslash B_s$ as $\{ (\omega, r) \in \mathbb{S}^{n-1} \times \R_+, \ \  s \wedge R(\omega) \leq r < R(\omega) \}$ for some function $R: \mathbb{S}^{2} \to \R$ satisfying $\| R \|_{L^\infty(S^{2})} \leq r$.

\medskip

As second step, we construct an appropriate random tesselation for $D$: We consider the Voronoi tesselation $\{ V_z\}_{z\in \Phi}$ associated to the point process $\Phi$, namely the sets
$$
V_z :=\bigl\{ y \in R^3 \, \colon \, |y - z | = \min_{z \in \Phi} {|z-y|} \bigr\}, \ \ \ \ \text{for every $z\in \Phi$.}
$$
We define
$$
V_{\eps,z}:= \bigl\{ y 	\in \R^3 \, \colon \, \frac{1}{\eps} y \in V_z \bigr\}, \ \ \ \ A_\eps:= \bigl\{  z \in \Phi_\alpha \, \colon \, V_{z,\eps} \cap D \neq \emptyset \bigr\}.
$$
Note that, by the previous rescaling, we have that, if $\mathop{diam}(V_z):= r_z$, then $\mathop{diam}(V_{\eps,z})=\eps r_z$. 

\smallskip

It is immediate to see that, for every realization $\omega \in \Omega$, the sets $\{ V_{\eps,z} \}_{z\in A^\eps}$ are essentially disjoint, convex and cover the set $D$. Since $\Phi$ is stationary, the random variables $\{r_z\}_{z\in \Phi}$ are identically distributed. Furthermore, they are distributed as a generalized Gamma distribution having intensity $g(r)= C(\lambda) r^{8} \exp^{- c(d,\lambda) r^3}$ \cite{Moller.PVT}[Proposition 4.3.1.]. From this, it is a standard computation to show that 
\begin{align}\label{card.A.eps}
\lim_{\eps \downarrow 0}\eps^3 \E \bigl[ |\# A_\eps|^q\bigr]^{\frac 1 q} = |D| \ \ \ \ \text{for every $q \in [1, +\infty)$ }
\end{align}
and that there exists a constant $c=c(\lambda)>0$ such that for every function $F: \R_+ \to \R$ (that is integrable with respect to the measure $g(r) d r$) 
\begin{align}\label{size.voronoi}
\E \bigl[ \exp{(c r^3)}\bigr] \lesssim 1, \ \ \ \ \  |\E \bigl[ F(r_z) F(r_y) \bigr] - \E \bigl[ F(r) \bigr]^2 | \lesssim \E \bigl[ F(r)^4 \bigr]^{\frac 1 2} \eps^{-c |x- y|^3}.
\end{align}

\medskip

Equipped with $\{ V_{\eps,z} \}_{z \in A^\eps}$, we argue that for every realization of $H^\eps$ and all $p \in [1 ; 2)$ it holds
\begin{align}\label{poi.1}
\int_D |v|^p \leq \sigma_\eps^{-p}C_\eps(p) \bigl( \int_D |\nabla v|^2\bigr)^{\frac p 2}
\end{align}
with $C^\eps(p)^p:= \bigl(\eps^3 \sum_{z\in A^\eps} r_z^{\frac{6}{2-p}} \bigr)^{\frac{2-p}{2}}$. Note that by \eqref{card.A.eps}, \eqref{size.voronoi} and the Law of Large Numbers the family $\{ C^\eps(p)\}_{\eps >0}$ satisfies \eqref{rv.bounds}. We show \eqref{poi.1} as follows: For every $v\in H^1_0(D^\eps)$, we rewrite
\begin{align}
\int_D |v|^p = \sum_{z\in A^\eps} \int_{V_z^\eps} |v|^p.
\end{align}
Since $\rho_z \geq 1$, we have that $B_{\aeps}(\eps z) \subset B_{\aeps \rho_z}(\eps z)$ so that the function $v \in H^1_0(D^\eps)$ vanishes on $B_{\aeps}(\eps z)$. Hence, thanks to the choice of $\{ V_{\eps,z}\}_{z\in A^\eps}$, we apply Lemma \ref{Poincare.easy} in each set $V_z^\eps$ with $B_s= B_{\eps^\alpha}(\eps z)$ and $B_r= B_{\eps r_z}(\eps z)$ and infer that
\begin{align}\label{poi.1.a}
\int_D |v|^p \lesssim \eps^3 \eps^{-\frac{p}{2}\alpha} \sum_{z\in A^\eps} r_z^3 \bigl(\int_{V_z^\eps} |\nabla v|^2\bigr)^{\frac p 2}.
\end{align}
Since $p\in [1, 2)$, we may appeal to H\"older's inequality and conclude that
 \begin{align}
\int_D |v|^p \lesssim \sigma_\eps^{-p} \bigl(\eps^3 \sum_{z\in A^\eps} r_z^{\frac{6}{2-p}} \bigr)^{\frac{2-p}{2}} \bigl(\sum_{z \in A^\eps} \int_{V_z^\eps \cap D} |\nabla v|^2\bigr)^{\frac p 2},
\end{align}
i.e. inequality \eqref{poi.1}. This concludes the proof of \eqref{poincare.Lp} in the case $ p \in [1; 2)$. 

\smallskip

To tackle the case $p=2$ we need a further manipulation: we distinguish between points $z \in A^\eps$ having $r_z > - \log \eps$ or $r_z \leq - \log \eps$:
\begin{align}\label{poi.2}
\int_D |v|^2 = \sum_{z\in A^\eps \atop r_z \leq - \log \eps } \int_{V_z^\eps} |v|^2 +  \sum_{z\in A^\eps \atop r_z > - \log \eps } \int_{V_z^\eps} |v|^2.
\end{align}
We apply Poincar\'e's inequality in $H^1_0(D)$ on every integral of the second sum above. This implies that
\begin{align}
 \sum_{z\in A^\eps \atop r_z > -\log \eps } \int_{V_z^\eps \cap D} |v|^2 \lesssim \sigma_\eps^{-2} \int_D |\nabla v|^2 \bigl(\eps^3 \sum_{z \in A^\eps} \eps^{-3} \sigma_\eps^2 \1_{r_z > -\log \eps} \bigr),
\end{align}
so that Chebyschev's inequality and \eqref{size.voronoi} yield 
\begin{align}
 \sum_{z\in \Phi^\eps(D) \atop d_z > -\log \eps } \int_{V_z^\eps \cap D} |v|^2 \lesssim \sigma_\eps^{-2} C_\eps(2) \int_D |\nabla v|^2,
\end{align}
where we set $C_\eps(2) := \bigl(\eps^3 \sum_{z \in A^\eps} \exp\bigl( r_z^2 \bigr) \bigr)$. Note that, again by \eqref{card.A.eps}-\eqref{size.voronoi} and the Law of Large Numbers, this definition of $C_\eps(2)$ satisfies \eqref{rv.bounds}. Inserting the previous display into \eqref{poi.2} implies that
\begin{align}\label{poi.2}
\int_D |v|^2 \lesssim \sum_{z\in A^\eps \atop r_z \leq - \log \eps } \int_{V_z^\eps \cap D} |v|^2 + \sigma_\eps^2 C_\eps(2) \int_{D} |\nabla v|^2.
\end{align}
We now apply Lemma \ref{Poincare.easy} in the remaining sum and obtain \eqref{poi.1.a} with $p=2$, where the sum is restricted to the points $z \in A^\eps$ such that $r_z \leq -\log \eps$. From this, we infer that
\begin{align}\label{poi.2}
\int_D |v|^2 \lesssim \sigma_\eps^2 ( |\log\eps|^3  + C_\eps(2)^2) \int_{D} |\nabla v|^2.
\end{align}
By redefining $C_\eps(2)^2 = \min\bigl(\eps^3 \sum_{z \in A^\eps} \exp\bigl( r_z^2 \bigr) ; 1 \bigr)$, the above inequality immediately implies \eqref{poincare.Lp} for $p=2$. The proof of Lemma \ref{l.poincare} is complete.
\end{proof}

\begin{proof}[Proof of Lemma \ref{l.unif.bounds}]
We prove Lemma \ref{l.unif.bounds} for $u_\eps $ solving \eqref{P.eps.p}. The case \eqref{P.eps.s} is analogous. Since $f \in L^q(D)$ with $q \in (2 ; +\infty]$, we may test \eqref{P.eps.p} with $u_\eps$ and use H\"older's inequality to control
\begin{align}
\int_D |\nabla u_\eps |^2 \leq \bigl(\int_D |f|^q\bigr)^{\frac 1 q} \bigl(\int_D |u_\eps|^{\frac{q}{q-1}} \bigr)^{\frac{q-1}{q}}.
\end{align}
We thus appeal to \eqref{poincare.Lp} with $p= \frac{q}{q-1}$ and obtain that
\begin{align}\label{gradient.pw}
\bigl( \int_D |\sigma_\eps\nabla u_\eps |^2 \bigr)^{\frac 1 2} \lesssim C_\eps( \frac{q}{q-1})^{1- \frac{1}{q}} \bigl(\int_D |f|^q\bigr)^{\frac 1 q}.
\end{align}
Thanks to \eqref{rv.bounds} of Lemma \ref{l.poincare}, this yields that the sequence $\{ \sigma_\eps \nabla u_\eps \}_{\eps>0}$ is bounded in $L^2(D)$ for $\P$-almost every realization. Similarly, we infer \eqref{energy} by taking the expectation and applying H\"older's inequality.

\smallskip

We argue the remaining bounds for the terms of $u_\eps$ in a similar way: We combine Lemma \ref{l.poincare} with the same calculation above for \eqref{gradient.pw} and apply H\"older's inequality. This establishes Lemma \ref{l.unif.bounds}.
\end{proof}

\section{ Proof of Theorem \ref{t.main}, $(a)$}\label{s.thm.a}

\begin{lem}\label{l.oscillating.p}
There exists an $\eps_0=\eps_0(d)$ such that  for every $\eps < \eps_0$ and $\P$-almost every realization there exists a family $\{ w_\eps \}_{\eps > \eps_0} \subset W^{1,+\infty}(\R^3)$  such that $\|w_\eps  \|_{L^\infty(\R^3)} = 1$, $w_\eps = 0$ in $H^\eps$ and
\begin{align}\label{strong.conv.pointwise}
\limsup_{\eps \downarrow 0} \int_D | \sigma_\eps^{-1} \nabla w_\eps|^2 \lesssim 1, \ \ \ \lim_{\eps \downarrow 0}\int_D | w_\eps -1|^2 = 0.
\end{align}
In addition,
\begin{align}\label{strong.conv.expectations}
\limsup_{\eps \downarrow 0} \E\bigl[ \int_D |\sigma_\eps^{-2}\nabla w_\eps|^2 \bigr] \lesssim 1,  \ \ \ \ \ \lim_{\eps \downarrow 0} \E\bigl[ \int_D | w_\eps -1|^2 \bigr] =0,
\end{align}
and for every $\phi \in C^\infty_0(D)$ and $v_\eps \in H^1_0(D^\eps)$ satisfying the bounds of Lemma \ref{l.unif.bounds} and such that $\sigma_\eps^2 v_\eps \rightharpoonup v$ in $L^1(\Omega \times D)$, it holds
\begin{align}\label{conv.Delta}
\E\bigl[ |\langle -\Delta w_\eps ; v_\eps \phi \rangle -  k^{-1} \int_D v \phi| \bigr] \to 0.
\end{align}
Here, the constant $k$ is as in Theorem \ref{t.main}, $(a)$.
\end{lem}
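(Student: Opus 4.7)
The plan is to build $w_\eps$ as a superposition of radial capacitary potentials, one per hole, following the Cioranescu--Murat strategy \cite{Cioranescu_Murat, Allaire_arma2} adapted to the random setting of \cite{GH}. Fix an intermediate scale $R_\eps := \eps^\gamma$ with $\gamma \in (1,\alpha)$, so that $r_z := \aeps\rho_z \ll R_\eps \ll \eps$ for a typical hole. After discarding a $\P$-negligible set of centres with oversized radii (controlled via \eqref{integrability.p} and Lemma \ref{zero.one.law}), for each remaining $z \in \Phi^\eps(D)$ define on the annulus $B_{R_\eps}(\eps z)\setminus B_{r_z}(\eps z)$ the radial harmonic function
\[
 w_\eps^z(x) := \frac{R_\eps}{R_\eps - r_z}\Bigl(1 - \frac{r_z}{|x-\eps z|}\Bigr),
\]
extended by $0$ on $B_{r_z}(\eps z)$ and by $1$ outside $B_{R_\eps}(\eps z)$. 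After a small mollification set $w_\eps := \prod_{z\in\Phi^\eps(D)} w_\eps^z \in W^{1,\infty}(\R^3)$; by construction $\|w_\eps\|_\infty=1$ and $w_\eps=0$ on $H^\eps$.

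For the norm bounds and $L^2$-convergence, a direct computation gives $\int|\nabla w_\eps^z|^2 \simeq 4\pi r_z$ per annulus, so summing and applying the Strong Law of Large Numbers with \eqref{integrability.p} yields $\E \int_D |\nabla w_\eps|^2 \simeq 4\pi\lambda\,\E[\rho]\,|D|\,\sigma_\eps^2 = k^{-1}|D|\sigma_\eps^2$, which is \eqref{strong.conv.pointwise}-\eqref{strong.conv.expectations}. Overlap of annuli (unavoidable since $\Phi$ clusters) is controlled by $|\nabla w_\eps|\le\sum_z|\nabla w_\eps^z|$, the cross-terms contributing lower-order errors in expectation. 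Since $w_\eps \equiv 1$ outside a set of volume $\lesssim \eps^{3\gamma-3}|D|\to 0$, combined with $\|w_\eps\|_\infty=1$, the $L^2$-convergence $w_\eps\to 1$ follows.

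The heart of the argument is \eqref{conv.Delta}. Distributionally, $-\Delta w_\eps$ is a sum of surface measures on the inner and outer spheres of the annuli with total mass $\simeq \pm 4\pi r_z$. The inner contribution vanishes when tested against $v_\eps\phi$ because $v_\eps$ has zero trace on $\partial B_{r_z}(\eps z)$, while on the outer sphere both $\phi$ and the local average of $v_\eps$ are nearly constant on the scale $R_\eps\ll\eps$, yielding
\[
 \langle -\Delta w_\eps,\,v_\eps\phi\rangle \;\approx\; 4\pi \sum_{z\in\Phi^\eps(D)} r_z\,\Bigl(\fint_{B_{R_\eps}(\eps z)} v_\eps\Bigr)\,\phi(\eps z).
\]
The random measure $\sigma_\eps^{-2}\sum_z 4\pi r_z\,\delta_{\eps z}$ converges, with a quantitative $H^{-1}(D)$-rate adapted from \cite{G}, to the constant density $4\pi\lambda\,\E[\rho]=k^{-1}$ on $D$. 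Pairing this against the weak $L^1(\Omega\times D)$-limit $v$ of $\sigma_\eps^2 v_\eps$, and controlling the local-averaging error via the vanishing of $v_\eps$ on $H^\eps$ and Lemma \ref{l.poincare}, produces the limit $k^{-1}\int_D v\phi$ in $L^1(\Omega)$.

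The chief obstacle is precisely this duality step: since $\{\sigma_\eps^2 v_\eps\}$ is only weakly $L^1$-convergent (with merely an $L^p$ bound for $p<2$ and a $|\log\eps|^3$-loss in $L^2$ from Lemma \ref{l.unif.bounds}), one cannot close the argument by a plain $H^{-1}/H^1_0$ pairing with a strong limit of $\sigma_\eps^{-2}(-\Delta w_\eps)$. The quantitative $H^{-1}$-rate of \cite{G} must be sharp enough to trade against the logarithmic loss in $L^2$, and the smallness of $v_\eps$ near $\partial B_{r_z}(\eps z)$ (via the vanishing trace and Poincar\'e's inequality) must absorb the remaining local-averaging errors on the outer spheres. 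Balancing these is the technical core of the lemma.
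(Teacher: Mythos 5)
Your overall strategy is the right one: build $w_\eps$ from radial capacitary potentials on annuli around each hole, identify $-\Delta w_\eps$ with a sum of surface measures whose rescaled density converges to $k^{-1}$, and close via a quantitative $H^{-1}$-rate \`a la \cite{G} that can be traded against the weak $L^1(\Omega\times D)$-convergence of $\sigma_\eps^2 v_\eps$. You have also correctly located the real crux in the duality step. However, there is a genuine gap in the construction itself.

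The phrase ``after discarding a $\P$-negligible set of centres with oversized radii'' does not survive scrutiny. Under \eqref{integrability.p} alone, the large radii are \emph{not} $\P$-negligible: for any fixed threshold $T$, the set of centres $z\in\Phi^\eps(D)$ with $\rho_z>T$ has positive spatial density $\lambda\,\P(\rho>T)$, and $\max_{z\in\Phi^\eps(D)}\rho_z$ typically diverges as $\eps\downarrow 0$ (the uniform bound \eqref{max.radii} requires the stronger condition \eqref{integrability.s}, which Lemma~\ref{l.oscillating.p} does not assume). More importantly, $w_\eps$ must vanish on \emph{all} of $H^\eps$, including the balls around those centres; you cannot simply omit the corresponding factors from the product $\prod_z w_\eps^z$. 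With a fixed outer radius $R_\eps=\eps^\gamma$, the annulus $B_{R_\eps}(\eps z)\setminus B_{r_z}(\eps z)$ is empty whenever $\rho_z\geq\eps^{\gamma-\alpha}$, so the annulus potential is simply not defined there. For the same reason, whenever several centres lie within distance $2R_\eps$ of one another (which happens for a nontrivial fraction of them, since $\Phi$ is Poisson), your outer sphere intersects neighbouring holes and the annulus construction again breaks down or degenerates in energy.

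The paper handles exactly this via Lemma~\ref{l.geometry.p}: $H^\eps$ is split into a ``good'' set of well-separated, small holes (on which one uses the annulus construction with the \emph{random} outer radius $R_{\eps,z}$ given by half the nearest-neighbour distance, capped at $\eps/2$), and a ``bad'' set $H^\eps_b$ of clustered or oversized holes, enclosed in a set $D^\eps_b$ with $\capacity(H^\eps_b,D^\eps_b)$ controlled via the sub-additivity of harmonic capacity and \eqref{bad.cap.vanishes}. Then $w_\eps:=w^\eps_g\wedge w^\eps_b$ is used instead of a product, which also avoids the cross-terms $\nabla w_\eps^{z_1}\cdot\nabla w_\eps^{z_2}$ you must otherwise estimate. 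This split propagates into the pairing: $\langle -\Delta w_\eps;\phi v_\eps\rangle = \langle\mu_\eps;\phi v_\eps\rangle + \int_D\nabla w^\eps_b\cdot\nabla(\phi v_\eps)$, with the second term killed by the energy bound and \eqref{bad.cap.vanishes}, and the first term handled by Lemma~\ref{conv.measure}. Your proposal invokes the quantitative $H^{-1}$-rate from \cite{G} in one line, but that is in fact where the bulk of the work lies (the Kohn--Vogelius comparison Lemma~\ref{Kohn_Vogelius.general}, the CLT estimate \eqref{clt}, the careful choice of $\gamma$ and $k$); without the good/bad decomposition the measure you are estimating is not even well-defined on the oversized holes. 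To make your route work you would have to add a capacity-potential component for the bad centres (essentially re-deriving Lemma~\ref{l.geometry.p}) and verify that the cross-terms of the product are genuinely lower order; the paper's $\min$ construction bypasses both issues.
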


\begin{proof}[Proof of Theorem \ref{t.main}, $(a)$]
The proof is similar to the one in \cite{Allaire_arma2}. We first show that $\sigma_\eps^2 u_\eps \rightharpoonup u$ in $L^p(D \times \Omega)$, $p\in [1, 2)$. By the uniform bounds of Lemma \ref{l.unif.bounds}, we have that, up to a subsequence, there exists a weak limit $u^* \in L^p(\Omega \times \R^d)$, $p \in [1,2)$. We prove that, $\P$-almost surely, the function $u^*= k f$ in $D$. This, in particular, also implies that the full family $\{\sigma_\eps^2u_\eps \}_{\eps>0}$ weakly converges to $u^*$.

\smallskip

We restrict to the converging subsequence $\{ \sigma_{\eps_j}^2 u_{\eps_j}\}_{j\in \N}$. However, for the sake of a lean notation, we forget about the subsequence $\{\eps_j \}_{j\in\N}$ and continue using the notation $u_\eps$ and $\eps \downarrow 0$.  Let $\eps_0$ and $\{w_\eps\}_{\eps >0}$ be as in Lemma \ref{l.oscillating.p}. For every $\eps < \eps_0$, $\chi \in L^\infty(\Omega)$ and $\phi \in C^\infty_0(D)$ we test equation \eqref{P.eps.p} with $\chi w_\eps \phi$ and take the expectation:
\begin{align}
\E \bigl[ \chi \int_D \nabla ( w_\eps \phi) \cdot \nabla u_\eps \bigr] = \E \bigl[\chi \int_D f w_\eps \phi \bigr].
\end{align}
Using Leibniz's rule, integration by parts and the bounds for $u_\eps$ and $w_\eps$ in Lemma \ref{l.unif.bounds} and \ref{l.oscillating.p} we reduce to
 \begin{align}
\lim_{\eps \downarrow 0}\E \bigl[ \chi  \langle -\Delta w_\eps ; u_\eps \phi \rangle \bigr] = \E \bigl[\chi \int_D f \phi \bigr].
\end{align}
We now appeal to \eqref{conv.Delta} in Lemma \ref{l.oscillating.p} applied to the converging subsequence $\{u_\eps \}_{\eps >0}$ and conclude that
\begin{align}
\E\bigl [\chi \int_D \phi (k^{-1}u^*- f) \bigr] = 0.
\end{align}
Since $\chi \in L^\infty(\Omega)$ and $\phi \in C^\infty_0(D)$ are arbitrary, we infer that for $\P$-almost every realization $u^*= k  f$ for (Lebesgue-)almost every $x \in D$. We stress that in this last statement we used the separability of $L^p(D)$, $p \in [1, \infty)$. This establishes that the full family $\sigma_\eps^2 u_\eps \rightharpoonup  k f$ in $L^p(\Omega \times D)$, $p \in [1, 2)$.

\bigskip

To conclude Theorem \ref{t.main}, $(a)$ it remains to upgrade the previous convergence from weak to strong. We fix $p \in [1, 2)$. By the assumption on $f$, the function $u^* \in L^q(D)$, for some $q \in (2; +\infty]$. Let $\{ u_n \}_{n\in \N} \subset C^\infty_0(D)$ be an approximating sequence for $u^*$ in $L^q(D)$.

\smallskip

Since $w_\eps \in W^{1,\infty}(D)$, the function $w_\eps u_n \in H^1_0(D)$. Hence, by Lemma \ref{l.poincare} applied to $u_\eps - w_\eps u_n$ we obtain
\begin{align}
\E\bigl[ \int_D |\sigma_\eps^2 u_\eps - w_\eps u_n |^p \bigr] \leq \sigma_\eps^{-p} \E \bigl[ C(p)^p \bigl( \int_D |\nabla( \sigma_\eps^2 u_\eps - w_\eps u_n)|^2 \bigr)^{\frac p 2}\bigr]
\end{align}
and, since $p< 2$ and $C(p)$ satisfies \eqref{rv.bounds} of Lemma \ref{l.poincare}, also
\begin{align}\label{strong.conv.b}
\E\bigl[ \int_D |\sigma_\eps^2 u_\eps - w_\eps u_n |^p \bigr] \leq \bigl( \sigma_\eps^{-2} \E \bigl[ \int_D |\nabla( \sigma_\eps^2 u_\eps - w_\eps u_n)|^2 \bigr]\bigr)^{\frac p 2}.
\end{align}
We claim that
\begin{align}\label{strong.conv.a}
\lim_{\eps \downarrow 0}  \sigma_\eps^{-2} \E \bigl[ \int_D |\nabla( \sigma_\eps^2 u_\eps - w_\eps u_n)|^2 \bigr] = k^{-1}\int_D |u_n - u^*|^2,
\end{align}
so that
\begin{align}\label{strong.conv.b}
\limsup_{\eps \downarrow 0}\E\bigl[ \int_D |\sigma_\eps^2 u_\eps - w_\eps u_n |^p \bigr] \lesssim \int_D |u_n - u^*|^2.
\end{align}
Provided this holds, we establish Theorem \ref{t.main}, $(a)$, as follows: By the triangle inequality we have that
\begin{align}
\int_D |\sigma_\eps^{-2}u_\eps - u|^p \leq \int_D |u_n - u^*|^p  + \int_D |\sigma_\eps^{-2}u_\eps-  w_\eps u_n |^p + \int_D |w_\eps - 1|^p |u_n|.
\end{align}
Since $u^*$ and $u_n \in C^\infty_0(D)$ are deterministic, we take the expectation and use Lemma \ref{l.oscillating.p} with \eqref{strong.conv.b} to get
\begin{align}
\limsup_{\eps \downarrow 0} \E\bigl[ \int_D |\sigma_\eps^2 u_\eps - u|^p \bigr] \lesssim \int_D |u_n - u^*|^p + ( \int_D |u_n - u^*|^2)^{\frac p 2}.
\end{align}
This implies the statement of Theorem \ref{t.main}, $(a)$, since $p< 2$ and $\{ u_n \}_{n\in \N}$ converges to $u$ in $L^2(D)$.

\bigskip

We thus turn to \eqref{strong.conv.a}: We skip the lower index $n \in \N$ and write $u$ instead of $u_n$. If we expand the inner square, we write
\begin{align}\label{strong.1}
\sigma_\eps^{-2} \E \bigl[ \int_D |\nabla( \sigma_\eps^2 u_\eps - w_\eps u)|^2 \bigr]= \sigma_\eps^2 \E\bigl[ \int_D |\nabla u_\eps|^2 \bigr]- 2 \E \bigl[\int_D \nabla u_\eps \cdot \nabla (w_\eps u) \bigr]+ \sigma_\eps^{-2}\E \bigl[\int_D|\nabla(w_\eps u)|^2\bigr]  .
\end{align}
For first term in the right-hand we use \eqref{P.eps.p} and the fact that $\sigma_\eps^2 u_\eps \rightharpoonup u^*$ in $L^p(\Omega \times D)$ with $p \in [1, 2)$. Hence,
\begin{align}\label{strong.0}
\lim_{\eps \downarrow 0} \sigma_\eps^2 \E \bigl[ \int_D |\nabla u_\eps|^2 \bigr]=  \int_D f u^* .
\end{align}
We focus on the remaining two terms in \eqref{strong.1}: Using Leibniz's rule and an integration by parts we have that
\begin{align}
\E \bigl[\int_D \nabla u_\eps \cdot \nabla (w_\eps u)\bigr] = \E\bigl[ \int_D w_\eps \nabla u_\eps \cdot \nabla u \bigr] + \E \bigl[\langle -\Delta w_\eps : u_\eps u \rangle \bigr] - \E \bigl[\int_D u_\eps \nabla w_\eps \cdot \nabla u \bigr] .
\end{align}
Thanks to Lemma \ref{l.unif.bounds}, Lemma \ref{l.oscillating.p} and since $u \in C^{\infty}_0(D)$, the first and second term vanish in the limit $\eps \downarrow 0$. Hence,
\begin{align}\label{strong.2}
\lim_{\eps \downarrow 0}  \E \bigl[ \int_D \nabla u_\eps \cdot \nabla (w_\eps u) \bigr]=  \lim_{\eps \downarrow 0} \E \bigl[ \langle -\Delta w_\eps ; u_\eps u \rangle \bigr].
\end{align}
By Lemma \ref{l.unif.bounds} and since $u_\eps \rightharpoonup u^*$, we may apply  \eqref{conv.Delta} of Lemma \ref{l.oscillating.p} with $\phi= u$ and $v_\eps = u_\eps$ to the limit on the right-hand side above. This yields
\begin{align}\label{strong.2}
\lim_{\eps \downarrow 0}  \E \bigl[ \int_D \nabla u_\eps \cdot \nabla (w_\eps u) \bigr]=  \int k^{-1} u^* u.
\end{align}

\smallskip

We now turn to the last term in \eqref{strong.1}. Also here, we use Leibniz rule to compute
\begin{align}
 \sigma_\eps^{-2}\E \bigl[\int_D |\nabla(w_\eps u)|^2\bigr]=  \sigma_\eps^{-2}\biggl( \E \bigl[ \int_D |\nabla w_\eps|^2 u^2 \bigr] + \E \bigl[\int_D |\nabla u|^2 w_\eps^2\bigr] + 2\E \bigl[ \int_D u \, w_\eps \, \nabla w_\eps \cdot \nabla u \bigr]\biggr).
\end{align}
By an argument similar to the one for \eqref{strong.2}, we reduce to
\begin{align}
\lim_{\eps \downarrow 0} \sigma_\eps^{-2}\E\bigl[ \int_D |\nabla(w_\eps u)|^2\bigr] = \lim_{\eps \downarrow 0} \sigma_\eps^{-2}\E\bigl[ \langle -\Delta w_\eps ; w_\eps u^2 \rangle.
\end{align}
We now apply \eqref{conv.Delta} of Lemma \ref{l.oscillating.p} to $v_\eps = w_\eps u$ and $\phi=u$. This implies that
\begin{align}\label{strong.3}
\lim_{\eps \downarrow 0} \sigma_\eps^{-2}\E\bigl[ \int_D |\nabla(w_\eps u)|^2\bigr] = \int k^{-1} u^2 .
\end{align}

\smallskip

Inserting \eqref{strong.0}, \eqref{strong.2} and \eqref{strong.3} into \eqref{strong.1} we have that
\begin{align}\label{strong.a}
\lim_{\eps \downarrow 0} \E \bigl( \int_D |\sigma_\eps u_\eps - w_\eps u|^q\bigr)^{\frac 2 q} = \int_D f u^*  + \int_D k^{-1} u^2 - 2 k^{-1} \int_D u^* u.
\end{align}
Since $u^* = k f$, it is easy to see the the right-hand side above equals the right-hand side of \eqref{strong.conv.a}. This establishes \eqref{strong.conv.a} and concludes the proof of Theorem \ref{t.main}, case $(a)$.
\end{proof}

\subsection{Proof of Lemma \ref{l.oscillating.p}}
 Lemma \ref{l.oscillating.p} may be proven in a way that is similar to \cite{GHV}[Lemma 3.1]. The first crucial ingredient is the following lemma, that allows to find a suitable partition of the holes $H^\eps$ by dividing this set into a part containing well separated holes and another one containing the clusters. The next result is the analogue of \cite{GHV}[Lemma 4.2] with the different rescaling of the radii of the balls generating the set $H^\eps$.

\smallskip

For every $x\in \R^3$, we recall the definition of $R_{\eps,x}$ in \eqref{distance}. We have:

\begin{lem}\label{l.geometry.p} Let $\gamma \in (0, \alpha -1)$. Then there exists a partition $H^\eps:= H^\varepsilon_{g} \cup H^\varepsilon_{b}$, with the following properties:
\begin{itemize}
\item There exists a subset of centres $n^\eps(D) \subset \Phi^\eps(D)$ such that
\begin{align}\label{good.set.ppp}
H^\varepsilon_g : = \bigcup_{z \in n^\varepsilon(D)} B_{\aeps \rho_z}( \varepsilon z ), \ \ \ \ \min_{z \in n^\eps(D)}R_{\eps, z}\geq \eps^{1+\frac{\gamma}{2}}, \ \ \ \ \ \max_{z \in n^\eps(D)}\aeps \rho_z \leq \eps^{1+\gamma}.
\end{align}

\smallskip

\item There exists a set $D^\eps_b(\omega) \subset  \R^3$ satisfying
\begin{align}
H^\varepsilon_{b} \subset D^\eps_b, \ \ \ \capacity ( H^\varepsilon_b, D_b^\eps) \lesssim C(\gamma) \aeps \sum_{z \in \Phi^\eps(D) \backslash n^\eps(D)} \rho_z \label{capacity.sum}
\end{align}
and for which
\begin{align}\label{ppp.distance.good.bad}
B_{\frac{R_{\eps,z}}{2}}(\eps z) \cap D^\eps_b = \emptyset, \ \ \  \ \ \ \ \text{for every $z \in n^\eps(D)$.} 
\end{align}
\end{itemize}
Finally, we have that
\begin{align}\label{bad.cap.vanishes}
\lim_{\eps \downarrow 0}\eps^{3}\sum_{z \in \Phi^\eps(D) \backslash n^\eps(D)} \rho_z^{\frac 3 \alpha} = 0, \ \ \text{$\P$-almost surely},  \ \ \ \ \ \lim_{\eps \downarrow 0}\E \bigl[\eps^{3}\sum_{z \in \Phi^\eps(D) \backslash n^\eps(D)} \rho_z^{\frac 3 \alpha}\bigr] = 0.
\end{align}
\end{lem}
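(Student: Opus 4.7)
The plan is to split $\Phi^\eps(D)$ into a good part $n^\eps(D)$ and a bad complement, chosen so that good holes are well separated both from each other and from the (mildly enlarged) bad holes, and then take $D^\eps_b$ to be the union of doubled bad balls. Concretely, in the spirit of \cite{GHV}[Lemma 4.2], I introduce
\[
A^\eps := \{z \in \Phi^\eps(D) : R_{\eps,z} < \eps^{1+\gamma/2}\}, \qquad C^\eps := \{z \in \Phi^\eps(D) : \rho_z > \eps^{1+\gamma-\alpha}\},
\]
together with the ``intruded'' set
\[
I^\eps := \bigl\{ z \in \Phi^\eps(D) \setminus (A^\eps \cup C^\eps) \colon \exists\, z' \in C^\eps,\; |\eps z - \eps z'| \leq 2\eps^\alpha \rho_{z'} + R_{\eps,z}/2 \bigr\},
\]
and set $n^\eps(D) := \Phi^\eps(D) \setminus (A^\eps \cup C^\eps \cup I^\eps)$, $D^\eps_b := \bigcup_{z \notin n^\eps(D)} B_{2\eps^\alpha \rho_z}(\eps z)$. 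The decomposition $H^\eps = H^\eps_g \cup H^\eps_b$ is then read off directly.

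The geometric properties are verified one by one. Property \eqref{good.set.ppp} is immediate from $z \notin A^\eps \cup C^\eps$ for $z \in n^\eps(D)$. The capacity bound \eqref{capacity.sum} follows from subadditivity of the harmonic capacity and the explicit identity $\capacity(B_r, B_{2r}) = 8\pi r$ in $\R^3$. For \eqref{ppp.distance.good.bad}, given $z \in n^\eps(D)$ and a bad $z'$: if $z' \in C^\eps$, then since $z \notin I^\eps$ the defining inequality of $I^\eps$ fails and yields exactly the needed separation; if $z' \notin C^\eps$, then $2\eps^\alpha \rho_{z'} \leq 2\eps^{1+\gamma}$, and combining this with $|\eps z - \eps z'| \geq 2R_{\eps,z} \geq 2\eps^{1+\gamma/2}$ and the general inequality $R_{\eps,z}/2 \leq |\eps(z-z')|/4$ (immediate from the definition of $R_{\eps,z}$, since $z'$ is another centre) gives disjointness for $\eps$ small enough.

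The quantitative part \eqref{bad.cap.vanishes} is handled by splitting the sum over the three pieces $A^\eps$, $C^\eps$, $I^\eps$ (with possible overcounting, which only helps) and estimating each by Campbell's formula for the marked Poisson process. The $C^\eps$ contribution equals $\lambda |D|\, \E[\rho^{3/\alpha} \1_{\rho > \eps^{1+\gamma-\alpha}}]$, which tends to $0$ by dominated convergence since $\eps^{1+\gamma-\alpha} \to \infty$ and $\E[\rho^{3/\alpha}] < \infty$ by \eqref{integrability.p}. The $A^\eps$ contribution is controlled using the Poisson void probability $\P(R_{\eps,z} < \eps^{1+\gamma/2}) \lesssim \eps^{3\gamma/2}$ combined with the independence of $\rho_z$ from the positions, giving a bound of order $\eps^{3\gamma/2} \E[\rho^{3/\alpha}]$. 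The almost sure statement follows from these $L^1$ bounds by the SLLN and a Borel--Cantelli argument along a sufficiently fast subsequence, as in Remark~\ref{rem.variations}(iii).

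The main obstacle is the $I^\eps$ contribution. The intruded set, although composed of centres with $\rho_z \leq \eps^{1+\gamma-\alpha}$, is in principle very large around any extremely-large-radius bad centre, and a naive union bound over intruders leads to tail expressions of the form $\E[\rho^3 \1_{\rho > M}]$ that are not controlled by $\E[\rho^{3/\alpha}]$ alone. The resolution is to weight the count by $\rho_z^{3/\alpha}$ (rather than by cardinality), to simplify the intrusion condition to $|z - z'| \leq 8\eps^{\alpha-1}\rho_{z'}/3$ via the inequality for $R_{\eps,z}/2$ noted above, and then to apply the two-point Campbell formula in a way that balances the scarcity of large $\rho_{z'}$ (ensured by $\E[\rho^{3/\alpha}] < \infty$) against the volume of the intrusion ball, ultimately producing a positive exponent $\kappa = \kappa(\alpha, \gamma) > 0$ in $\eps$ for the $I^\eps$ contribution.
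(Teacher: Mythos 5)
Your construction is correct and follows essentially the same approach as the paper, which proves Lemma~\ref{l.geometry.p} by citing \cite{G}[Lemma 4.1] (itself an adaptation of \cite{GHV}[Lemma 4.2]). Your decomposition into $A^\eps$ (centres too close to a neighbour), $C^\eps$ (overly large radii) and the intruded set $I^\eps$, with $n^\eps(D)$ the complement and $D^\eps_b$ the union of doubled bad balls, is the same design. The geometric verifications are all sound: \eqref{good.set.ppp} is immediate from $z\notin A^\eps\cup C^\eps$; \eqref{capacity.sum} follows from monotonicity and subadditivity of capacity plus $\capacity(B_r,B_{2r})=8\pi r$; and \eqref{ppp.distance.good.bad} is handled correctly by splitting into $z'\in C^\eps$ (where $z\notin I^\eps$ gives exactly the needed inequality) and $z'\notin C^\eps$ (where $2\eps^\alpha\rho_{z'}\le 2\eps^{1+\gamma}$ combined with $|\eps z - \eps z'|\ge 2R_{\eps,z}\ge 2\eps^{1+\gamma/2}$ and $R_{\eps,z}\le\frac12|\eps z-\eps z'|$ yields disjointness for small $\eps$). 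For \eqref{bad.cap.vanishes} your treatment of $A^\eps$ and $C^\eps$ by Mecke/Campbell is correct, and the key observation for $I^\eps$ — that $z\notin A^\eps$ combined with $R_{\eps,z}\le\frac12|\eps z-\eps z'|$ reduces the intrusion condition to $|z-z'|\lesssim\eps^{\alpha-1}\rho_{z'}$ — is exactly what makes the two-point Mecke estimate close. A minor stylistic remark: the phrasing ``Borel--Cantelli along a sufficiently fast subsequence'' is slightly loose, since the lemma asserts a.s. convergence as $\eps\downarrow 0$ and not only along a subsequence; this is filled by the usual interpolation argument (or just the SLLN for the spatial averages, as used in \cite{GHV}), and is not a real gap. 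The proposal is a valid fleshing-out of the citation the paper gives.
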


Let $\gamma$ in Lemma \ref{l.geometry.p} be fixed. We construct $w_\eps$ as done in \cite[]{GHV}: we set $w_\eps = w_\eps^g \wedge w_\eps^b$ with 
\begin{align}\label{def.w}
w^\eps_b:= \begin{cases} 1- \mathop{argmin}\capacity(H^\eps_b ; D^\eps_b) \ \ &\text{in $D^\eps_b$}\\
1 \ \ &\text{in $\R^3\backslash D^\eps_b$} 
\end{cases} \ \ \ 
w^{\eps}_g = \begin{cases}
w_{\eps,z} \ \ &\text{in $B_{R_{\eps,z}}(\eps z),  z\in n^\eps(D)$}\\
1 \ \ &\text{in $\R^3 \backslash \bigcup_{z\in n^\eps(D)} B_{R_{\eps,z}}(\eps z)$}
\end{cases}
\end{align}
where for each $z\in n^\eps(D)$, the function $w_{\eps,z}$ vanishes in the hole $B_{\aeps \rho_z}(\eps z)$ and solves
\begin{align}\label{harmonic.cell}
w^{\eps}_g = \begin{cases}
-\Delta w_{\eps,z} = 0 \ \ \ &\text{in $B_{R_{\eps,z}}(\eps z)\backslash B_{\aeps\rho_z}(\eps z)$}\\
0 \ \ \ &\text{on $\partial B_{\aeps\rho_z}(\eps z)$}\\
1 \ \ \ &\text{on $\partial B_{R_{\eps,z}}(\eps z)$}
\end{cases}
\end{align}
We also define of the measure 
\begin{align}\label{def.mu.eps}
\mu_\eps= \sum_{z\in n^\eps(D)} \partial_n w_{\eps,z} \delta_{\partial B_{R_{\eps,z}}(\eps z)} \in H^{-1}(D).
\end{align}
We stress that all the previous objects depend on the choice of the parameter $\gamma$ in Lemma \ref{l.geometry.p}. The next result states that this parameter may be chosen in so that the norm $\| \mu_\eps - 4\pi \lambda \E \bigl[ \rho \bigr] \|_{H^{-1}(D)}$ is suitably small. This, together with Lemma \ref{l.geometry.p}, provides the crucial tool to show Lemma \ref{l.oscillating.p}:

\smallskip

\begin{lem}\label{conv.measure}
There exists $\gamma \in (0, \alpha-1)$ such that if $\mu_\eps$ is as in \eqref{def.mu.eps} there exists $\kappa >0$ such that for every random field $v \in  H^1_0(D)$ 
\begin{align}
\E \bigl[ \langle (\sigma_\eps^{-2}\mu_\eps - 4\pi \E\bigl[\rho \bigr]); v \rangle \bigr] \lesssim \eps^{\kappa}\bigl(\sigma_\eps^{-1}\E\bigl[ \int_D |\nabla v|^2 \bigr]^{\frac 1 2} +  \E\bigl[ \int_D |v|^2 \bigr]^{\frac 1 2}\bigr).
\end{align}
\end{lem}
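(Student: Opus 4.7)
The proof relies on an explicit form of $\partial_n w_{\eps,z}$, a passage from surface to volume integrals, and a quantitative Campbell--Mecke computation for the marked Poisson process. Solving \eqref{harmonic.cell} radially on the annulus $B_{R_{\eps,z}}(\eps z) \setminus B_{\aeps\rho_z}(\eps z)$ gives the explicit capacitary profile $w_{\eps,z}(x) = \frac{R_{\eps,z}}{R_{\eps,z}-\aeps\rho_z}\bigl(1 - \frac{\aeps\rho_z}{|x-\eps z|}\bigr)$ and, on the outer sphere,
$$\partial_n w_{\eps,z}\big|_{\partial B_{R_{\eps,z}}(\eps z)} = \frac{\aeps\rho_z}{R_{\eps,z}(R_{\eps,z}-\aeps\rho_z)} = \frac{\aeps\rho_z}{R_{\eps,z}^2}\bigl(1 + \mathcal{O}(\eps^{\gamma/2})\bigr),$$
where \eqref{good.set.ppp} of Lemma \ref{l.geometry.p} has been used to bound $\aeps\rho_z/R_{\eps,z}$ by $\eps^{\gamma/2}$ on $n^\eps(D)$. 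Substituting into \eqref{def.mu.eps} and using $\sigma_\eps^{-2}\aeps = \eps^3$, the pairing becomes
$$\sigma_\eps^{-2}\langle \mu_\eps, v\rangle = \eps^3 \sum_{z \in n^\eps(D)} \frac{\rho_z}{R_{\eps,z}^2}\int_{\partial B_{R_{\eps,z}}(\eps z)} v\, d\sigma + \mcl{E}^{(1)}_\eps[v],$$
with expansion error $\mcl{E}^{(1)}_\eps[v]$ controlled in expectation by $\eps^{\gamma/2}$ times the right-hand side of the claim, via the standard trace inequality on $B_{R_{\eps,z}}$ (which produces a factor $R_{\eps,z}^{-1/2}$, absorbable thanks to $R_{\eps,z} \geq \eps^{1+\gamma/2}$).

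Next, I convert the surface integrals into volume averages through the identity
$$\frac{1}{4\pi R^2}\int_{\partial B_R(x)} v\, d\sigma - \frac{1}{|B_R(x)|}\int_{B_R(x)} v\, dy = \frac{1}{|B_R(x)|}\int_{B_R(x)} \frac{R - |y-x|}{|y-x|}\, \nabla v(y)\cdot (y-x)\, dy,$$
valid for $v \in H^1$ by integration by parts in polar coordinates. Summing over $z \in n^\eps(D)$ turns the main term into $4\pi\eps^3\sum_{z\in n^\eps(D)} \rho_z\, |B_{R_{\eps,z}}|^{-1}\int_{B_{R_{\eps,z}}(\eps z)} v$, up to a further error bounded by Cauchy--Schwarz and SLLN by a positive power of $\eps$ times $\sigma_\eps^{-1}\|\nabla v\|_{L^2(\Omega\times D)}$. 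Enlarging the index set from $n^\eps(D)$ to $\Phi^\eps(D)$ introduces only the bad-centre contribution, which is algebraically small by H\"older's inequality and \eqref{bad.cap.vanishes}.

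The final step is to identify the deterministic limit. By the Campbell--Mecke formula for the marked Poisson process of intensity $\lambda f(\rho)\, d\rho$, the expectation of the remaining main term equals
$$4\pi\lambda\E[\rho]\, \E\Bigl[\int_D v\Bigr] + (\text{boundary correction of size }\eps),$$
the boundary term arising from balls $B_{R_{\eps,z}}(\eps z)$ that straddle $\partial D$ and being absorbed via a Hardy-type estimate exploiting the $C^{1,1}$-regularity of $\partial D$. Collecting the four error contributions and choosing $\gamma \in (0,\alpha-1)$ small enough that the bad-centre bound \eqref{bad.cap.vanishes} degrades only polynomially in $\eps$ yields a single exponent $\kappa > 0$. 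The main obstacle is controlling the fluctuation of the volume-averaged sum around its Campbell mean when $v$ is random and possibly correlated with $(\Phi,\mcl{R})$: a naive SLLN does not suffice, and one must perform an $L^2(\Omega)$ variance computation relying on the fast decorrelation of disjoint blocks of the Poisson process (cf.\ the second bound in \eqref{size.voronoi}), together with a Cauchy--Schwarz split between the noise and the test function, producing a CLT-type $\eps^{3/2}$ rate that is absorbed by the $\|v\|_{L^2(\Omega\times D)}$ factor on the right-hand side.
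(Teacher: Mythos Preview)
Your approach differs structurally from the paper's, and the final fluctuation step contains a genuine gap. The paper introduces a \emph{mesoscopic} piecewise-constant density $m_\eps(k)$, obtained by averaging the surface charges over cubes $K_{\eps,k,x}$ of side $\sim k\eps$ (with $k$ chosen as a negative power of $\eps$), and then splits via Cauchy--Schwarz in $\Omega$:
\[
\E\bigl[\langle\sigma_\eps^{-2}\mu_\eps - 4\pi\lambda\E[\rho];v\rangle\bigr] \leq \E\bigl[\|\sigma_\eps^{-2}\mu_\eps - m_\eps(k)\|_{H^{-1}}^2\bigr]^{1/2}\E\bigl[\|\nabla v\|^2\bigr]^{1/2} + \E\bigl[\|m_\eps(k)-4\pi\lambda\E[\rho]\|_{L^2}^2\bigr]^{1/2}\E\bigl[\|v\|^2\bigr]^{1/2}.
\]
Both operator norms on the right are random variables \emph{independent of $v$}, which is what makes the decoupling work. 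The $H^{-1}$ piece is small because $\mu_\eps$ and $m_\eps(k)$ carry equal mass on each mesoscopic cube (Lemma~\ref{Kohn_Vogelius.general}); the $L^2$ piece is small because each cube contains $\sim k^3$ points and a CLT-type variance bound applies.

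Your surface-to-volume step replaces this by the density $G_\eps(x)=4\pi\eps^3\sum_z\rho_z|B_{R_{\eps,z}}|^{-1}\1_{B_{R_{\eps,z}}(\eps z)}(x)$, which on each ball equals a constant proportional to $\rho_z R_z^{-3}$: order one with order-one fluctuations from ball to ball. Hence $\|G_\eps-4\pi\lambda\E[\rho]\|_{L^2(D)}$ does \emph{not} vanish, and the ``Cauchy--Schwarz split between the noise and the test function'' you invoke cannot close. The ``CLT-type $\eps^{3/2}$ rate'' would appear for deterministic $v$ via a variance computation, but for $v$ measurable with respect to $(\Phi,\mcl R)$ there is no variance to exploit: the quantity $\E[\int_D(G_\eps-c)v]$ is a single number, and bounding it uniformly over random $v$ forces control of $G_\eps-c$ in a dual norm, which is exactly what the omitted mesoscopic averaging provides. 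Likewise, Campbell--Mecke with random $v$ yields a Palm expectation in which the added point perturbs both $R_{\eps,\cdot}$ and $v$ itself; it does not reduce to $4\pi\lambda\E[\rho]\,\E[\int_D v]$.

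A secondary point: you propose taking $\gamma$ small, but the paper takes $\gamma$ close to $\alpha-1$ (namely $\gamma=\tfrac{20}{21}(\alpha-1)$). The reason is that the CLT step requires the truncated second moment $\E[\rho^2\1_{\aeps\rho<\eps^{1+\gamma}}]\lesssim\eps^{-(\alpha-1-\gamma)(2-3/\alpha)_+}$, which for $\alpha>3/2$ blows up as $\gamma\downarrow 0$; under \eqref{integrability.p} alone, $\E[\rho^2]$ may be infinite.
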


\bigskip

\begin{proof}[Proof of Lemma \ref{l.oscillating.p}]
By construction, it is clear that, for $\P$-almost every realization, the functions $w_\eps \in W^{1,\infty}(\R^3) \cap H^1(\R^3)$, vanish in $H^\eps$ and are such that $\| w_\eps \|_{L^\infty(\R^3)}=1$.

\smallskip

We now turn to \eqref{strong.conv.pointwise}. Using the definitions of $w_g^\eps$ and $w^\eps_b$ and Lemma \ref{l.geometry.p} we have that
\begin{align}\label{L2.dec}
\| w_\eps - 1 \|_{L^2(D)} = \| w_\eps^g - 1 \|_{L^2(D)} + \| w_\eps^b - 1\|_{L^2(D)}.
\end{align}
By Poincar\'e's inequality in each ball $\{ B_{R_{\eps,z}}(\eps z) \}_{z\in n^\eps(D)}$ we bound
\begin{align}\label{w.g.L2}
\| w_\eps^g - 1 \|_{L^2(D)}^2 \leq \sum_{z\in n^\eps(D)} \eps^2 \| \nabla w^\eps_g \|_{L^2(D)}^2 \lesssim \eps^{\alpha-1} \eps^3 \sum_{z \in n^\eps(D)} \rho_z.
\end{align}
Thanks to \eqref{integrability.p} and the Strong law of Large numbers, for $\P$-a.e. realization the right-hand side vanishes in the limit $\eps \downarrow 0$.

\smallskip

We now turn to the second term: Since by the maximum principle $|w_\eps^b -1| \leq 1$, we may use the definition of $D^\eps_b$ to bound
\begin{align*}
 \| w_\eps^b - 1\|_{L^2(D)}^2 &\leq |D_\eps^b \cap D| \leq \sum_{z\in \Phi^\eps(D)\backslash n^\eps} \eps^{3\alpha} (\rho_z \wedge \eps^{-\alpha})^3 \lesssim \eps^3\sum_{z\in \Phi^\eps(D)\backslash n^\eps} \rho_z^{\frac{3}{\alpha}} .
 \end{align*}
Thanks to \eqref{bad.cap.vanishes} in Lemma \ref{l.geometry.p}, the right-hand side vanishes in the limit $\eps \downarrow 0$ for $\P$-almost every realization. Combining this with \eqref{w.g.L2} and \eqref{L2.dec} yields \eqref{strong.conv.pointwise} for  $w_\eps - 1$. Inequality \eqref{strong.conv.pointwise} for  $\sigma_\eps^{-1}\nabla w_\eps$ follows by Lemma \ref{l.geometry.p} and the definition \eqref{def.w} of $w_\eps$ as done in \cite{GHV}[Lemma 3.1].  Limit \eqref{strong.conv.expectations} may be argued as done above for \eqref{strong.conv.pointwise}, this time appealing to the bound \eqref{integrability.p} and the stationarity of $(\Phi, \mathcal{R})$.

\bigskip

It thus remains to show \eqref{conv.Delta}.  Using \eqref{def.w}, \eqref{def.mu.eps} and the fact that $\phi u_\eps \in H^1_0(D^\eps)$,  we may decompose
\begin{align}\label{decomposition.Delta}
\langle -\Delta w_\eps; \phi v_\eps \rangle = \langle \mu_\eps ; \phi v_\eps \rangle +  \int_D \nabla w_\eps^b \cdot \nabla(\phi v_\eps).
\end{align} 
Since $v_\eps$ is assumed to satisfy the bounds in Lemma \ref{l.unif.bounds}, H\"older's inequality, Lemma \ref{l.unif.bounds} , definition \eqref{def.w} and \eqref{bad.cap.vanishes} of Lemma \ref{l.geometry.p} imply that
\begin{align}
\lim_{\eps \downarrow 0}\E\bigl[  |\int_D \nabla w_\eps^b \cdot \nabla(\phi v_\eps)|\bigr]\leq \lim_{\eps \downarrow 0}\E \bigl[ \capacity( H^\eps_b ; D^\eps_b) \bigr] = 0.
\end{align}
This and \eqref{decomposition.Delta} thus yield that
\begin{align}
 \limsup_{\eps \downarrow 0}\E\bigl[  |\langle -\Delta w_\eps; \phi v_\eps \rangle- k^{-1} \int_D v \phi |\bigr] =  \limsup_{\eps \downarrow 0}\E\bigl[  |\langle \mu_\eps ; \phi v_\eps \rangle- k^{-1} \int_D v \phi |\bigr].
\end{align}
Using the triangle inequality and the assumption $v_\eps \rightharpoonup v$ in $L^1(\Omega \times D)$, we further reduce to
\begin{align}\label{error.term}
\limsup_{\eps \downarrow 0}\E\bigl[  |\langle -\Delta w_\eps; \phi v_\eps \rangle- k^{-1} \int_D v \phi |\bigr]  = \limsup_{\eps \downarrow 0}\E\bigl[  |\langle (-\sigma_\eps^2 \Delta w_\eps -  k^{-1} ; \phi \sigma_\eps^{-2}v_\eps \rangle |\bigr]
\end{align}
By Lemma \ref{conv.measure}, there exists $\kappa > 0$ such that
\begin{align}
\limsup_{\eps \downarrow 0}\E\bigl[  |\langle (-\sigma_\eps^2 \Delta w_\eps -  4\pi \lambda \E\bigl[ \rho \bigr]); \phi \sigma_\eps^{-2}v_\eps \rangle |\bigr]&\leq \limsup_{\eps \downarrow 0} \eps^{\kappa} \bigl( \sigma_\eps^{-1} \E\bigl[ \int_{D}|\nabla(\phi \sigma_\eps^2 v_\eps)|^2 \bigr]^{\frac 1 2} + \E\bigl[ \int_{D}(\phi \sigma_\eps^2 v_\eps)^2 \bigr]^{\frac 1 2} \bigr).
\end{align}
Thanks to the assumptions on $v_\eps$, we infer that the right-hand side is zero. This, together with \eqref{error.term}, yields \eqref{conv.Delta}. The proof of Lemma \ref{l.oscillating.p} is thus complete.
\end{proof}

\bigskip

\begin{proof}[Proof of Lemma \ref{conv.measure}]
We divide the proof into steps. The strategy of this proof is similar to the one for \cite{G}[Theorem 2.1, (b)]. 

\bigskip

\noindent \textit{ Step 1: (Construction of a partition for D) } Let $Q := [-\frac 1 2 ;\frac 1 2]^3$; for $k \in \N$ and $x\in \R^3$ we define 
$$
Q_{\eps, k,x}:= \eps z + k\eps Q, \ \ \ \ Q_{\eps,x} := Q_{\eps, 1,x}
$$
Let $N_{k,\eps} \subset \Z^3$ be a collection of points such that $|N_{k,\eps}| \lesssim \eps^{-3}$ and $D \subset \bigcup_{x \in N_{k,\eps}} Q_{\eps, k,x}$. For each $x \in N_{k,\eps}$ we consider the collection of points $N_{\eps, k, x}:= \{ z \in n^\eps(D) \, \colon \, \eps z \in Q_{\eps, k,x} \} \subset \Phi^\eps(D)$ and define the set
\begin{align}\label{covering.Poisson}
K_{\eps, k, x} := \bigl(Q_{\eps, k, x} \bigcup_{z \in N_{\eps, k,x}}  Q_{\eps,z} \bigr) \backslash \bigcup_{z \in \tilde\Phi^\eps(D) \backslash N_{\eps, k,x}} Q_{\eps,z}.
\end{align}
Since by definition of $n^\eps(D)$ in Lemma \ref{l.geometry.p} the cubes $\{ Q_{\eps,z} \}_{z \in \tilde\Phi^\eps(D)}$ are all disjoint, we have that
\begin{equation}
\begin{aligned}\label{properties.covering}
&D \subset \bigcup_{x \in N_{\eps, k}} K_{\eps, k,x}, \ \ \  \sup_{x\in N_{\eps,k}}|\mathop{diam}(K_{\eps, k,x})| \lesssim k \eps,\\
&( k  - 1)^3 \eps^3 \leq |K_{k,x}| \leq ( k + 1)^3 \eps^3 \ \ \ \text{for every $x \in N_{\eps,k}$.}
\end{aligned}
\end{equation}
Note that the previous properties hold for every realization $\omega \in \Omega$. 

\bigskip

\noindent \textit{ Step 2.}  For $k \in \N$ fixed, let $\{ K_{\eps, x,k} \}_{x \in N_{k,\eps}}$ be the covering of $D$ constructed in the previous step. We define the random variables  
\begin{align}\label{averaged.sum}
S_{\eps, k,x}:= \frac{4\pi}{|K_{\eps, x,k}|}\sum_{z \in N_{\eps, k,x}} Y_{\eps, z} \ \ \ \ \ Y_{\eps,z}:= \eps^3 \rho_z \frac{R_{\eps,z}}{R_{\eps,z} - \aeps \rho_z}.
\end{align}
and construct the random step function
\begin{align}\label{mu.k.eps}
m_\eps(k) = 4\pi \sum_{x \in N_{\eps, k}} S_{\eps, k,x}  \1_{K_{\eps, k, x}}.
\end{align}

\smallskip

Let $v$ be as in the statement of the lemma and $m_\eps(k)$ as above. The triangle and Cauchy-Schwarz inequalities imply that
\begin{align}\label{triangle.mu}
\E \bigl[ \langle& \sigma_\eps^{-2}\mu_\eps - 4\pi \lambda \E\bigl[ \rho \bigr] ; v \rangle \bigr]\\
& \leq \E \bigl[ \|\sigma_\eps^{-2} \mu_\eps - m_\eps(k) \|_{H^{-1}}^2 \bigr]^{\frac 1 2} \E\bigl[ \|\nabla v \|_{L^2(D)}^2 \bigr]^{\frac 1 2}  + \E \bigl[ \|m_\eps(k)  - 4\pi \lambda \E\bigl[ \rho \bigr] \|_{L^2}^2 \bigr]^{\frac 1 2} \E\bigl[ \| v \|_{L^2(D)}^2 \bigr]^{\frac 1 2},
\end{align}
so that the proof of the lemma reduces to estimating the norms 
$$
\E \bigl[ \|\sigma_\eps^{-2} \mu_\eps - m_\eps(k) \|_{H^{-1}}^2 \bigr]^{\frac 1 2}, \ \ \ \ \ \E \bigl[ \|m_\eps(k)  - 4\pi \lambda \E\bigl[ \rho \bigr] \|_{L^2}^2 \bigr]^{\frac 1 2}.
$$ 

\smallskip

We now claim that there exists a $\gamma >0$, $k \in \N$ 
\begin{align}\label{estimates.mu.muk}
\E\bigl[ \|\sigma_\eps^{-2}\mu_\eps - m_\eps(k) \|_{H^{-1}(D)}^2 \bigr] &\lesssim \eps^\kappa \sigma_\eps^{-2}, \ \ \ \ \E\bigl[ \| m_\eps(k) - 4\pi \lambda \E\bigl[ \rho \bigr]\|_{L^2(D)}^2 \bigr] \lesssim  \eps^\kappa
\end{align}
for a positive exponent $\kappa >0$. Combining these two inequalities with \eqref{triangle.mu} establishes Lemma \ref{conv.measure}.

\bigskip

In the remaining part of the proof we tackle inequalities \eqref{estimates.mu.muk}. We follow the same lines of \cite{G}[Theorem 1.1, (b)]. and thus only sketch the main steps for the argument. 

\bigskip

\noindent \textit{Step 3.} We claim that
\begin{align}\label{mu.muk.a}
 \E\bigl[ \|\sigma_\eps^{-2}\mu_\eps - m_\eps(k) \|_{H^{-1}(D)}^2 \bigr] \lesssim (k \eps)^2  |\log\eps| \eps^{-(\alpha -1-\gamma)(2- \frac 3 \alpha)_+}.
\end{align}

\smallskip

 We first argue that that
\begin{align}\label{mu.eps.a}
 \| \sigma_{\eps}^{-2}\mu_\eps -m_\eps(k) \|_{H^{-1}(D)}^2 {\lesssim} (\eps k)^2 \eps^{3} \sum_{z \in n^\eps(D)}\rho_z^2 (\eps d_z)^{-3},
\end{align}
This follows by  Lemma \ref{Kohn_Vogelius.general} applied to the measure $\sigma_\eps^{-2}\mu_\eps$: In this case, the random set of centres is $\mathcal{Z}=\tilde\Phi^\eps(D)$, the random radii $\mathcal{R}= \{ R_{\eps,z}\}_{z\in \tilde \Phi(D)}$, the functions $g_i = \sigma_{\eps}^{-2} \nabla_\nu w_{\eps,z}$, $z\in \tilde\Phi^\eps(D)$ and the partition $\{ K_{\eps,k,x}\}_{x \in N_{\eps,k}}$ of the previous step. Note that, by construction, this partition satisfies the assumptions of Lemma \ref{Kohn_Vogelius.general}.  The explicit formulation of the harmonic functions $\{ w_{\eps,z}\}_{z\in n^\eps(D)}$ defined in \eqref{harmonic.cell} (c.f. also \cite{G}[(2.24)]) implies that for every $z \in n^\eps(D)$
\begin{align}\label{bounds.w.eps.z}
\int_{\partial B_{R_{\eps,z}}(\eps z)} |\sigma_{\eps}^{-2}\partial_\nu w_{\eps,z}|^2 \lesssim \eps^3 \rho_z^2  d_z^{-3}, \ \ \ \int_{\partial B_{\eps,z}} \sigma_{\eps}^{-2} \partial_\nu w_{\eps,z} \stackrel{\eqref{averaged.sum}}{=} Y_{\eps,z}. 
\end{align}
Therefore, Lemma \ref{Kohn_Vogelius.general} and the bounds \eqref{bounds.w.eps.z} yield that 
\begin{align}
 \| \sigma_{\eps}^{-2}\mu_\eps -m_\eps(k) \|_{H^{-1}(D)}^2{\lesssim}\sup_{x \in N_{k,\eps}}\text{diam}(K_{\eps,k,x} )\sum_{z \in \tilde\Phi^\eps(D)}\rho_z^2 (\eps d_z)^{-3},
\end{align}
which implies \eqref{mu.eps.a} thanks to \eqref{properties.covering}.

\smallskip

It thus remains to pass from \eqref{mu.eps.a} to \eqref{mu.muk.a}: We do this by taking the expectation and arguing as for \cite{G}[Inequality (4.22)]. We rely on the stationarity of $(\phi, \mathcal{R})$, the properties of the Poisson point process and the fact that $z \in n_\eps$ implies that $\aeps \rho_z \leq \eps^{1 + \gamma}$ and $R_{\eps,z} \geq \eps^{1+\frac 1 2\gamma}$.

\bigskip

\textit{Step 4.} We now turn to the left-hand side in the second inequality of \eqref{estimates.mu.muk} and show that
\begin{align}\label{mu.muk.b}
\E\biggl[ \| m_\eps(k) - 4\pi \lambda \E \bigl[ \rho \bigr] \|_{L^2(D)}^2 \biggr] &\leq k^{-3}\eps^{-(\alpha-1 - \gamma)} + k^{-1} + \eps^{2\gamma}\\
& \quad \quad + \eps^{(\alpha-1-\gamma)(\frac3\alpha-1)} + \eps^{4(1+\gamma)-\alpha} + \eps k \eps^{-(\alpha-1-\gamma)(2-\frac 3 \alpha)_+}.
\end{align}

\smallskip

 The proof of this step is similar to \cite{G}[Theorem 2.1, (b)]: Using the explicit formulation of $m_\eps(k)$ we reduce to
\begin{align}\label{first.reduction.m}
\E\biggl[ \| m_\eps(k) - 4\pi \lambda \E \bigl[ \rho \bigr] \|_{L^2(D)}^2 \biggr] \lesssim \avsum_{x \in N_{k,\eps}} \E \bigl[ (S_{k,\eps,x} - \lambda \E\bigl[ \rho \bigr])^2 \bigr]
\end{align}
If $\mathring{N}_{\eps,k}:= \{ x \in N_{\eps,k} \, \colon \, \text{dist}(Q_{\eps,k, x} ; \partial D) > 2\eps \}$, we split
\begin{align}\label{control.reduction}
\avsum_{x \in N_{k,\eps}} \E \bigl[ (S_{k,\eps,x} - \lambda \E\bigl[ \rho \bigr])^2 \bigr]  \lesssim (\eps k)^{3}\sum_{x \in N_{k,\eps}\backslash \mathring{N}_{\eps,k} } \E \bigl[ (S_{k,\eps,x} - \lambda \E\bigl[ \rho \bigr])^2 \bigr] + \avsum_{x \in \mathring{N}_{k,\eps}} \E \bigl[ (S_{k,\eps,x} - \lambda \E\bigl[ \rho \bigr])^2 \bigr].
\end{align}
Since $\partial D$ is $C^1$ and compact, for $\eps$ small enough (depending on $D$) we have
\begin{align}\label{mu.k.b}
 (\eps k)^{3}\sum_{x \in N_{k,\eps}\backslash \mathring{N}_{\eps,k} } \E \bigl[ (S_{k,\eps,x} - \lambda \E\bigl[ \rho \bigr])^2 \bigr]\lesssim \eps k \E \bigl[ \rho^2 \1_{\aeps\rho < \eps^{1+\gamma}} \bigr] \stackrel{\eqref{integrability.p}}{\lesssim} \eps k \eps^{-(\alpha-1-\gamma)(2-\frac 3 \alpha)_+}.
\end{align}
By stationarity, the second term in \eqref{control.reduction} is controlled by 
\begin{align}\label{clt}
\E \bigl[ (S_{k,\eps,0} - \lambda \E\bigl[ \rho \bigr])^2 \bigr].
\end{align}
Hence,
\begin{align}
\avsum_{x \in N_{k,\eps}} \E \bigl[ (S_{k,\eps,x} - \lambda \E\bigl[ \rho \bigr])^2 \bigr]  \lesssim \E \bigl[ (S_{k,\eps,0} - \lambda \E\bigl[ \rho \bigr])^2 \bigr] +  \eps k \eps^{-(\alpha-1-\gamma)(2-\frac 3 \alpha)_+}.
\end{align}

The remaining term on the righ-hand side may be controlled by the right-hand side in \eqref{mu.muk.b} by means of standard CLT arguments as done in \cite{G}[Inequality (4.23)] for the analogous term. We stress that the crucial observation is that the random variables $S_{k,\eps,x} - \lambda \E\bigl[ \rho \bigr]$ are centred up to an error term. We mention that in this case the set $K_{\eps,k,x}$ has been defined in a different way from \cite{G} and we use properties \eqref{properties.covering} instead of \cite{G}[(4.13)]. This yields \eqref{mu.muk.b}

\bigskip

\textit{Step 5.} We show that, given \eqref{mu.muk.a} and \eqref{mu.muk.b} of the previous two steps, we may pick $\gamma$ and $k\in \N$ such that inequalities \eqref{estimates.mu.muk} hold: Thanks to the definition of $\sigma_\eps$ and since $\alpha \in (1; 3)$, we may find $\gamma$ close enough to $\alpha -1$, e.g. $\gamma= \frac{20}{21}(\alpha-1)$, and a $k \in \N$, e.g. $k=-\frac{9}{20}(\alpha-1)$, such that 
$$
(\eps k) |\log\eps| \eps^{-(\alpha -1-\gamma)(2 -\frac 3 \alpha)_+} \leq \sigma_\eps^{-2}k^2 \eps^{(\alpha-1-\gamma)}\leq \eps^{\frac 1 20 (\alpha -1)}.
$$
This, thanks to \eqref{mu.muk.a}, implies that the first inequality in \eqref{estimates.mu.muk} holds with the choice $\kappa=\frac 1 20 (\alpha-1) > 0$. The same values of $\gamma$ and $\kappa$ yield that also the right hand side of \eqref{mu.muk.b} is bounded by $\eps^{1-\frac 1 2 (\alpha-1)}$. This yields also the remaining inequality in \eqref{estimates.mu.muk} and thus concludes the proof of  Lemma \ref{conv.measure}.
\end{proof}

\begin{proof}[Proof of Lemma \ref{l.geometry.p}]
The proof of this lemma follows the same construction implemented in the proof of \cite{G}[Lemma 4.1] with $d=3$, $\delta = \gamma$ and with the radii $\{\rho_z \}_{z \in \Phi^\eps(D)}$ rescaled by $\aeps$ instead of $\eps^3$. Note that the constraint for $\gamma$ is due to this different rescaling. In the current setting, we replace $\eps^2$ by $\eps^{1+\frac 1 2 \gamma}$ in the definition of the set $K_b^\eps$ in \cite{G}[(4.7)]. Estimate  \eqref{bad.cap.vanishes} may be argued as \cite{G}[Lemma 4.4] by relying on \eqref{integrability.p}.
\end{proof}

\section{Proof of Theorem \ref{t.main}, $(b)$}\label{s.thm.b}
The next lemma is the analogue of Lemma \ref{l.oscillating.p}:

\begin{lem}\label{l.reduction.2}
For every $\delta > 0$, there exists an $\eps_0 >0$ and a set $A_\delta \in \mathcal{F}$, having  $\P(A_\delta) \geq 1-\delta$, such that for every $\omega \in A_\delta$ and $\eps \leq \eps_0$ there exists a linear map
$$
R_\eps : \{ \phi \in C^\infty_0(D, \Rd) \, \colon \, \nabla \cdot \phi = 0 \}  \to H^1_0(D, \Rd)
$$
satisfying $R_\eps \phi =0$ in $H^\eps$,  $\nabla \cdot R_\eps \phi =0$ in $D$ and such that
\begin{align}\label{aver.R}
\limsup_{\eps \downarrow 0}\E \bigl[ \1_{\mathcal{A}_{\delta} } \int_D |\sigma_\eps^{-1} \nabla R_\eps(\phi)|^2 \bigr] \lesssim \| v\|_{C^1(D)}^2, \ \ \ \E \bigl[ \1_{\mathcal{A}_{\delta} } \int_D |R_\eps(\phi) - \phi|^2 \bigr] \to 0.
\end{align}
Furthermore, if $v_\eps$ satisfies the bounds of Lemma \ref{l.unif.bounds} and $\sigma_\eps^2 v_\eps \rightharpoonup v$ in $L^1(\Omega \times D)$, then
\begin{align}\label{meas.R}
\E \bigl[ \1_{\mathcal{A}_{\delta} } | \int \nabla R_\eps(\phi) \cdot \nabla v_\eps - K^{-1}\int \rho v | \, \bigr] \to 0.
\end{align}
\end{lem}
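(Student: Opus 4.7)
The plan is to mirror the construction of $w_\eps$ in Lemma \ref{l.oscillating.p} with two essential modifications. First, on each good annulus $B_{R_{\eps,z}}(\eps z)\setminus B_{\eps^\alpha\rho_z}(\eps z)$ with $z\in n^\eps(D)$ (from Lemma \ref{l.geometry.p}), the scalar harmonic corrector of \eqref{harmonic.cell} is replaced by the vectorial Stokes corrector $(\mathbf w_{\eps,z}, q_{\eps,z})$ with boundary values $0$ on the inner sphere and $\phi(\eps z)$ on the outer sphere. Its explicit form (a superposition of a Stokeslet and a potential dipole) gives $\int|\nabla \mathbf w_{\eps,z}|^2\lesssim \eps^\alpha\rho_z|\phi(\eps z)|^2$ and an outer normal stress whose total flux equals $6\pi\eps^\alpha\rho_z\phi(\eps z)$: exactly the Stokes analogue of \eqref{bounds.w.eps.z}, with $6\pi$ (the Stokes capacity constant) replacing $4\pi$ (the harmonic capacity constant). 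This is what makes $K=(6\pi\lambda\E[\rho])^{-1}$ appear in place of $k=(4\pi\lambda\E[\rho])^{-1}$. Second, because the Stokes capacity is not subadditive, we must work on the event $A_\delta$ of Lemma \ref{l.borel.cantelli}, (ii), on which the clusters of holes of comparable size have overlap bounded by $M$; this restores subadditivity up to the universal factor $M\km$.

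To assemble $R_\eps\phi$ I glue $\phi$ on the complement of the balls to the Stokes correctors in each good annulus (interpolating between $\phi$ and $\phi(\eps z)$ by a smooth cutoff and a local Bogovskii correction with $H^1$ cost $O(\eps\|\nabla\phi\|_\infty)$), set it to zero on the good inner balls, and subtract a Bogovskii correction supported in the bad set $D^\eps_b$ of Lemma \ref{l.geometry.p} that forces $R_\eps\phi=0$ on $H^\eps_b$ while preserving $\nabla\cdot R_\eps\phi=0$ throughout $D$. The $H^1$-cost of this last correction is, on $A_\delta$, bounded by $M\km$ times $\eps^\alpha\sum_{z\in\Phi^\eps(D)\setminus n^\eps(D)}\rho_z$, which vanishes in expectation by \eqref{bad.cap.vanishes} (the linear power of $\rho_z$ being admissible thanks to \eqref{integrability.s}). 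With this construction the two estimates of \eqref{aver.R} follow as in Lemma \ref{l.oscillating.p}: the gradient bound reduces, via the Strong Law of Large Numbers under \eqref{integrability.p}, to $\sigma_\eps^{-2}\eps^3\sum_{z\in n^\eps(D)}\rho_z|\phi(\eps z)|^2\lesssim\|\phi\|_{C^0}^2$, while the $L^2$-convergence $R_\eps\phi\to\phi$ uses that the support of $R_\eps\phi-\phi$ has vanishing Lebesgue measure and $\|R_\eps\phi\|_\infty\lesssim\|\phi\|_\infty$.

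The main obstacle is \eqref{meas.R}. An integration by parts, the incompressibility $\nabla\cdot R_\eps\phi=0$ (which absorbs the pressure $p_\eps$ of \eqref{P.eps.s} when $R_\eps\phi$ is used as a test function against $v_\eps$), and the bounds of the previous step reduce the problem to showing
\begin{align*}
\E\bigl[\,\1_{A_\delta}\langle \sigma_\eps^{-2}\nu_\eps^S - 6\pi\lambda\E[\rho]\phi\,;\,\sigma_\eps^2 v_\eps\rangle\,\bigr]\longrightarrow 0,
\end{align*}
where $\nu_\eps^S := \sum_{z\in n^\eps(D)}(\partial_\nu \mathbf w_{\eps,z}-q_{\eps,z}\nu)\,\delta_{\partial B_{R_{\eps,z}}(\eps z)}$ is the Stokes analogue of the measure $\mu_\eps$ of \eqref{def.mu.eps}. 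The estimate on $\sigma_\eps^{-2}\nu_\eps^S$ is obtained by repeating the proof of Lemma \ref{conv.measure} on the tesselation $\{K_{\eps,k,x}\}$, with the scalar weights $Y_{\eps,z}$ of \eqref{averaged.sum} replaced by the vectorial weights $6\pi\eps^\alpha\rho_z\phi(\eps z)$ coming from the Stokes traction computed above. The only new ingredient is the variance bound controlling the CLT-type term \eqref{clt}: absorbing the multiplicative cluster factor $M$ coming from Lemma \ref{l.borel.cantelli} there requires the stronger integrability $\E[\rho^{3/\alpha+\beta}]<+\infty$, which is exactly why \eqref{integrability.s} is imposed for part $(b)$.
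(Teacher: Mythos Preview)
Your overall architecture is right --- split into good and bad holes, use Stokes correctors on the good annuli, and prove a Stokes version of Lemma~\ref{conv.measure} for \eqref{meas.R} --- but the treatment of the bad set $H^\eps_b$ has a genuine gap.

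You propose to ``subtract a Bogovskii correction supported in the bad set $D^\eps_b$'' and claim its $H^1$-cost is bounded by $M\km$ times $\eps^\alpha\sum_{z\notin n^\eps(D)}\rho_z$ thanks to the overlap bound of Lemma~\ref{l.borel.cantelli}. This does not follow. Lemma~\ref{l.borel.cantelli} only gives bounded overlap \emph{within each pair of consecutive size-layers} $I_{i,\eps}\cup I_{i-1,\eps}$; balls from non-adjacent layers can still be nested arbitrarily (a small bad ball can sit well inside a much larger one). A single cutoff-plus-Bogovskii step on $D^\eps_b$ therefore cannot be controlled: the Bogovskii operator on overlapping annuli of wildly different scales has no uniform bound, and a global cutoff destroys the zero condition already achieved on larger holes. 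The paper resolves this via the additional combinatorial Lemma~\ref{l.geometry.s} (the analogue of \cite{GH}[Lemma~3.2]), which refines the bad set into a \emph{hierarchical} covering $\{J^\eps_k\}_{k=-3}^{\km}$ satisfying two crucial properties you do not invoke: the dilated balls within each layer are disjoint (\eqref{similar.size.apart}), and smaller bad balls stay away from the $\theta$-dilations of larger ones (\eqref{small.dont.intersect.big}). With this structure one constructs $\phi^\eps_b$ by solving $\km$ successive Stokes problems in disjoint annuli, from the largest scale down; \eqref{small.dont.intersect.big} is exactly what guarantees that each iteration preserves the zero conditions obtained at previous steps. This iterative mechanism, not a single Bogovskii correction, is what makes the Stokes capacity of $H^\eps_b$ effectively subadditive.

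A secondary point: your attribution of assumption~\eqref{integrability.s} is misplaced. The stronger moment is not consumed by the CLT term \eqref{clt} in the measure estimate --- that part runs under \eqref{integrability.p} alone, as in case~(a). Condition~\eqref{integrability.s} is used upstream, in Lemma~\ref{l.borel.cantelli}, to obtain the finite $\km$ and the layer-wise overlap bound that feed into Lemma~\ref{l.geometry.s} and hence into the construction of $\phi^\eps_b$.
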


\begin{proof}[Proof of Theorem \ref{t.main}, $(b)$]
The proof of this statement is very similar to the one for case $(a)$ and we only emphasize the few technical differences: Using the bounds of Lemma \ref{l.unif.bounds}, we have that, up to a subsequence, $u_{\eps_j} \rightharpoonup u^*$ in $L^p(\Omega \times D)$, $1 \leq p < 2$. We prove that 
\begin{align}\label{weak.convergence.stokes}
u^*=K(f - \nabla p), 
\end{align}
 where $p \in H^{1}(D)$ is the unique weak solution to 
\begin{align}\label{eq.P}
\begin{cases}
- \Delta p = -\nabla \cdot f \ \ \ &\text{in $D$}\\
(\nabla p - f) \cdot n = 0 \ \ \ &\text{on $\partial D$.}
\end{cases},\ \ \ \int_D p = 0. 
\end{align}
Identity \eqref{weak.convergence.stokes} also implies that the full $\{u_\eps\}_{\eps >0}$ converges to $u^*$. 

\smallskip

As for the proof of Theorem \ref{t.main}, case (a), we restrict to the converging subsequence $\{ u_{\eps_j} \}_{j\in \N}$ but we skip the index $j\in \N$ in the notation. We start by noting that, using the divergence-free condition for $u_\eps$ and that $u_\eps$ vanishes on $\partial D$, we have that for every $\phi \in C^\infty(D)$ and $\chi \in L^\infty (\Omega)$
\begin{align}\label{first.condition.u}
\E \bigl[ \chi \int_D \nabla \phi \cdot u^* \bigr] = 0.
\end{align}

\smallskip

Let $\chi \in L^\infty(\Omega)$ and $\phi \in C^\infty_0(D)$ with $\nabla \cdot \phi =0$ in $D$ be fixed. For every $\delta > 0$, we appeal to Lemma \ref{l.reduction.2} to infer that there exists an $\eps_\delta>0$ and a set $A_\delta \in \mathcal{F}$, having  $\P(A_\delta) \geq 1-\delta$, such that for every $\omega \in A_\delta$ and for every $\eps \leq \eps_\delta$ we may consider the function $R_\eps \phi \in H^1_0(D^\eps)$ of Lemma \ref{l.reduction.2}. Testing equation \eqref{P.eps.s} with $R_\eps(\rho)$, and using that the vector field $R_\eps v$ is divergence-free, we infer that
\begin{align}
\E \bigl[ \1_{A_\delta} \chi \int_D \nabla u_\eps \colon \nabla (R_\eps \phi)  \bigr] = \E \bigl[ \chi \1_{A_\delta} \int_D (R_\eps \phi) f \bigr].
\end{align}
Using Lemma \ref{l.reduction.2} and the bounds of Lemma \ref{l.unif.bounds} this implies that in the limit $\eps \downarrow 0$ we have
\begin{align}
\E \bigl[  \1_{A_\delta} \chi \int_D (u^* - K f) \phi  \bigr] = 0.
\end{align}
We now send $\delta \downarrow 0$ and appeal to the Dominated Convergence Theorem to infer that
\begin{align}\label{condition.u.2}
\E \bigl[ \chi \int_D (u^* - K f) v  \bigr] = 0.
\end{align}

\smallskip

Since $D$ has $C^{1,1}$-boundary and is simply connected, the spaces $L^p(D)$, $p \in (1, +\infty)$ admit an $L^p$-Helmoltz decomposition $L^p(D)= L^p_{\text{div}}(D) \oplus  L^p_{\text{curl}}(D)$ \cite{Galdi_book}[Section III.1]. This, the separability of $L^p(D)$, $p \in [1, +\infty)$, and the arbitrariness of $\chi$ and $\phi$ in \eqref{condition.u.2},  allows us to infer that for $\P$-almost realization the function $u^*$ satisfies $u^* = K f + \nabla p(\omega; \cdot)$ for $p(\omega; \cdot ) \in W^{1,p}(D)$, $p \in [1; 2)$. By a similar argument, we may use \eqref{first.condition.u} to infer that for $\P$-almost every realization and for every $v\in W^{1.q}(D)$, $q > 2$ we have
$$
\int_D (\nabla p(\cdot; \omega) + Kf) \cdot \nabla v = 0.
$$
Since \eqref{eq.P} admits a unique mean-zero solution, we conclude that $p(\omega, \cdot)$ does not depend on $\omega$. Finally, since $D$ is regular enough and $f \in L^q(D)$, standard elliptic regularity yields that $p \in H^1(D)$. This concludes the proof of \eqref{weak.convergence.stokes}.

\smallskip

We now upgrade the convergence of the family $\{ u_\eps \}_{\eps >0}$ to $u^*$ from weak to strong: We claim that for every $\delta>0$ we may find a set $A_\delta \subset \Omega$ with $\P(A_\delta) > 1-\delta$ such that
\begin{align}\label{strong.b}
\lim_{\eps \downarrow 0} \E \bigl[ \1_{A_\delta} \int_D | \sigma_\eps^2u_\eps - u^* |^q \bigr] = 0.
\end{align}
Here, $q \in [1, 2)$. The proof of this inequality follows the same lines of the proof for \eqref{strong.a} in case $(a)$: In this case, we rely on Lemma \ref{l.reduction.2} instead of Lemma \ref{l.oscillating.p} and use that, thanks to the definition \eqref{eq.P}, it holds
\begin{align}
\int_D f (f - \nabla p ) = \int_D (f- \nabla p)^2.
\end{align}

\smallskip

From \eqref{strong.b}, the statement of Theorem \ref{t.main}, $(b)$ easily follows: Let, indeed, $q \in [1, 2)$ be fixed. For every $\delta >0$, let $A_\delta$ be as above. We rewrite
\begin{align}
\E \bigl[ \int |\sigma_\eps u_\eps - u |^q \bigr] = \E \bigl[ \1_{A_\delta}\int |\sigma_\eps u_\eps - u |^q \bigr] + \E \bigl[ \1_{\Omega \backslash A_\delta}\int |\sigma_\eps u_\eps - u |^q \bigr]
\end{align}
and, given an exponent $p \in (q; 2)$, we use H\"older's inequality and the assumption on $A_\delta$ to control
\begin{align}
\E \bigl[ \int |\sigma_\eps u_\eps - u |^q \bigr] \leq \E \bigl[ \1_{A_\delta}\int |\sigma_\eps u_\eps - u |^q \bigr] + \delta^{1- \frac{q}{p}}  \E \bigl[ \int |\sigma_\eps u_\eps - u |^{p} \bigr]^{\frac{q}{p}}.
\end{align}
Since by Lemma \ref{l.unif.bounds} the family $\sigma_\eps^2 u_\eps$ is uniformly bounded in every $L^p(\Omega \times D)$ for $p \in [1, 2)$, we establish
\begin{align}
\limsup_{\eps \downarrow 0}\E \bigl[ \int |\sigma_\eps u_\eps - u |^q \bigr] \leq \limsup_{\eps \downarrow 0}\E \bigl[ \1_{A_\delta}\int |\sigma_\eps u_\eps - u |^q \bigr] + \delta^{1- \frac{q}{p}} \stackrel{\eqref{strong.b}}{\lesssim} \delta^{1-\frac{q}{p}}.
\end{align}
Since $\delta$ is arbitrary, we conclude the proof of Theorem \ref{t.main}, $(b)$.
\end{proof}

\smallskip

\subsection{Proof of Lemma \ref{l.reduction.2}}
This section is devoted to arguing  Lemma \ref{l.reduction.2} by leveraging on the geometric information on the clusters of holes $H^\eps$ contained Lemma \ref{l.borel.cantelli}. The idea behind these proof is in spirit very similar to the one for Lemma \ref{l.oscillating.p} in case (a): As in that setting, indeed, we aim at partitioning the holes of $H^\eps$ into a subset  $H^\eps_g$ of disjoint and ``small enough'' holes and  $H^\eps_b$ where the clustering occurs. 

\medskip

The main difference with case $(a)$, however, is due to the fact that we need to ensure that the so-called Stokes capacity of the set $H^\eps_b$, namely the vector 
\begin{align}\label{stokes.capacity}
(\text{St-Cap} (H^\eps_b))_i = \inf\biggl\{ \int |\nabla v |^2 \, \, \colon \, \, v \in C^\infty_0(\R^3; \R^3), \, \, \, \nabla \cdot v = 0 \, \, \text{in $\R^3$,} \, \, v \geq e_i \ \ \text{in $H^\eps_b$} \biggr\}, \ \ \ i= 1, 2, 3
\end{align}
vanishes in the limit $\eps \downarrow 0$. The divergence-free constraint implies that, in contrast with the harmonic capacity of case (a), the Stokes capacity is not subadditive. This yields that, if $H^\eps_b$ is constructed as in Lemma \ref{l.geometry.p}, then we cannot simply control its Stokes-capacity by the sum of the capacity of each ball of $H^\eps_b$. 

\medskip

We circumvent this issue by relying on the information on the length of the clusters given by Lemma \ref{l.borel.cantelli}. We do this by adopting the exact same strategy used to tackle the same issue in the case of the Brinkmann scaling in \cite{GH}. The following result is a simple generalization of \cite{GH}[Lemma 3.2] and upgrades the partition of Lemma \ref{l.geometry.p} in such a way that we may control the Stokes-capacity of the clustering holes in $H^\eps_b$. For a detailed discussion on the main ideas behind this construction, we refer to \cite{GH}[Subsection 2.3].

\begin{lem} \label{l.geometry.s}
Let $\gamma > 0$ be as chosen in Lemma \ref{conv.measure}. For every $\delta > 0$ there exists $\eps_0 > 0$ and $A_\delta \subset \Omega$ with $\P(A_\delta) > 1-\delta$ such that for every $\omega \in \Omega$ and $\eps \leq \eps_0$ we may choose $H^\eps_g, H^\eps_b$ of Lemma \ref{l.geometry.p} as follows:
 \begin{itemize}
 \item There exist $ \Lambda(\beta)> 0$, a sub-collection $J^\eps \subset \mathcal I^\eps$ and constants $\{ \lambda_l^\eps \}_{z_l\in J^\eps} \subset [1, \Lambda]$ such that
	\begin{align}
		\label{bar.H^b}
	 H_b^\eps \subset \bar H^\eps_b := \bigcup_{z_j \in J^\eps} B_{\lambda_j^\eps \aeps \rho_j}( \eps z_j), \ \ \ \lambda_j^\eps \aeps \rho_j \leq \Lambda \eps^{\kappa}.
	\end{align}
 \item There exists $k_{max}= k_{max}(\beta, d)>0$ such that we may partition 
 	$$
	\mathcal I^\eps= \bigcup_{k=-3}^{k_{max}} \mathcal I_k^\eps, \ \ \ J^\eps= \bigcup_{i=-3}^{k_{max}} J_k^\eps,
	$$
 with $\I^\eps_k \subset J^\eps_k$ for all $k= 1, \cdots, \km$ and
{ \begin{align}\label{inclusion.step.by.step}
	\bigcup_{z_i \in \mathcal I_k^\eps} B_{\aeps \rho_i}( \eps z_i) \subset \bigcup_{z_j \in J_k^\eps} B_{\lambda_j^\eps \aeps \rho_j}( \eps z_j);
\end{align}}
 \item  For all $k=-3, \cdots, k_{max}$ and every $z_i, z_j \in J_k^\eps$, $z_i \neq z_j$
\begin{align}\label{similar.size.apart}
B_{\theta^2 \lambda_i^\eps \aeps \rho_i}(\eps z_i) \cap B_{\theta^2 \lambda_j^\eps \aeps \rho_j}(\eps z_j) = \emptyset;
\end{align}
\item For each $k=-3, \cdots, k_{max}$ and $z_i \in \mathcal I_k^\eps$ and for all $ z_j \in \bigcup_{l=-3}^{k-1} J_l^\eps$ we have
\begin{align}
\label{small.dont.intersect.big}
B_{\aeps \rho_i}(\eps z_i) \cap B_{\theta \lambda_j^\eps \aeps \rho_j}(\eps z_j) = \emptyset.
\end{align}
 \end{itemize}
 \smallskip
 Finally, the set $D^\eps_b$ of Lemma \ref{l.geometry.p} may be chosen as
\begin{align}
\label{D_b}
& D^\eps_b = \bigcup_{z_i \in J^\eps} B_{\theta \aeps \lambda_i^\eps\rho_i}(\eps z_i).
\end{align}

\smallskip

The same statement is true for $\P$-almost every $\omega \in \Omega$ for every $\eps > \eps_0$ (with $\eps_0$ depending, in this case, also on the realization $\omega$).
\end{lem}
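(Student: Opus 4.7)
The strategy will adapt the hierarchical covering construction of \cite{GH}[Lemma 3.2] to the radius scaling $\aeps\rho_z$ of the present paper. The starting ingredient is Lemma \ref{l.borel.cantelli}: under assumption \eqref{integrability.s}, on an event of probability $\geq 1-\delta$ (or $\P$-almost surely for $\eps$ small enough) we may split the centres of $H^\eps$ into size classes $\{I_{i,\eps}\}_{i=1}^{\km}$ obeying the key \emph{bounded-overlap} property \eqref{no.overlapping.borel}, i.e.\ among the $4$-dilated balls in two consecutive classes at most $M=M(\beta)$ meet simultaneously. Without this quantitative control on clusters, no uniform bound on the dilation factors $\lambda^\eps_j$ would be possible, because the Stokes capacity is not subadditive.

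I would then define the families $J^\eps_k$ and the weights $\lambda^\eps_j$ by induction on $k$, processing the size classes from largest to smallest. At each level $k$ one performs two operations. First, an \emph{intra-class merging}: any maximal sub-cluster $C\subset \I^\eps_k$ of balls that mutually intersect after a mild dilation is replaced by a single enveloping ball $B_{\lambda\aeps\rho_j}(\eps z_j)$ for some $z_j\in C$; the bounded-overlap property gives $|C|\leq M$, so the enveloping dilation $\lambda$ may be chosen $\leq cM$ for a geometric constant $c$. Second, an \emph{inter-class absorption}: any ball of $\I^\eps_k$ that intersects the $\theta$-dilation of an already-processed larger enveloping ball is absorbed into the latter, its index transferred to the corresponding $J^\eps_l$ with $l<k$ and the associated $\lambda^\eps_j$ incremented by a bounded amount. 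Indices that never undergo either operation receive $\lambda^\eps_j=1$, and $D^\eps_b$ is defined by \eqref{D_b}.

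The geometric conclusions \eqref{inclusion.step.by.step}--\eqref{small.dont.intersect.big} are then immediate: \eqref{similar.size.apart} is enforced by the intra-class merging step at each level, \eqref{small.dont.intersect.big} by the inter-class absorption, and \eqref{inclusion.step.by.step} by the way each $J^\eps_k$ is extended from $\I^\eps_k$. The size-gap \eqref{partition.magnitude} ensures that any inter-class absorption at level $k$ can involve only balls from $I_{k-1,\eps}\cup I_{k,\eps}$, so each individual $\lambda^\eps_j$ is incremented at most $M\km$ times; this yields $\lambda^\eps_j\leq \Lambda=\Lambda(\beta)$, and the uniform bound $\lambda^\eps_j\aeps\rho_j\leq \Lambda \eps^\kappa$ follows from \eqref{max.radii}. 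For the compatibility with Lemma \ref{l.geometry.p}: the containment $H^\eps_b\subset D^\eps_b$ holds by \eqref{bar.H^b}, and the capacity estimate \eqref{capacity.sum} transfers to the new partition because the enveloping radii are at most $\Lambda$ times the original ones, so the harmonic capacity of $\bar H^\eps_b$ inside $D^\eps_b$ is comparable, up to a $\Lambda$-dependent constant, to $\aeps\sum_{z\in \Phi^\eps(D)\setminus n^\eps(D)}\rho_z$.

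The main obstacle will be organising the inductive construction so that the dilation factors $\lambda^\eps_j$ are truly uniformly bounded in $\eps$ and not the outcome of a cascade through many levels. Concretely, one has to verify that an absorption happening at level $k$ cannot trigger further absorptions in levels $k+1,\ldots,\km$, which is where the size gap of factor $\eps^\kappa$ in \eqref{partition.magnitude} and the restriction that \eqref{no.overlapping.borel} involves only $I_{i,\eps}\cup I_{i-1,\eps}$ enter crucially. I would mirror the presentation of \cite{GH}[Subsection 2.3 and Section 3], whose argument in the Brinkmann regime transfers essentially verbatim once one replaces the radius scaling $\eps^3$ by $\aeps$, and introduces three auxiliary lower indices $k=-3,-2,-1$ which provide room for the boundary clusters without perturbing \eqref{similar.size.apart}--\eqref{small.dont.intersect.big}.
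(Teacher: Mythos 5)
Your proposal is correct and takes essentially the same approach as the paper. The paper's own proof consists of a short reference to \cite{GH}[Lemma 3.2], noting that the radius scaling $\eps^3$ is replaced by $\eps^\alpha$ (which is harmless since one only needs $\eps^\alpha \ll \eps$), and that Lemma \ref{l.borel.cantelli} supplies the bounded-overlap input that \cite{GH}[Lemma 5.1] provided in the Brinkmann regime; your sketch of the hierarchical merge/absorb construction is a faithful account of what that cited argument does.
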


\medskip

\begin{proof}[Proof of Lemma \ref{l.geometry.s}]
The proof of this result follows the exact same lines of  of \cite{GH}[Lemma 3.2]. We thus refer to it for the proof and to \cite{GH}[Subsection 3.1] for a sketch of the ideas behind the quite technical argument. We stress that the different scaling of the radii does not affect the argument since the necessary requirement is that $\eps^\alpha << \eps$. This holds for every choice of $\alpha \in (1, 3)$. We also emphasize that in the current setting, Lemma \ref{l.borel.cantelli} plays the role of \cite{GH}[Lemma 5.1]. This result is crucial as it provides information on the length of the overlapping balls of $H^\eps$. For every $\delta >0$, we thus select the set $A_\delta$ of Lemma \ref{l.borel.cantelli} containing those realizations where the partition of $H^\eps$ satisfies \eqref{max.radii} and \eqref{no.overlapping.borel}. Once restricted to the set $A_\delta$, the construction of the set $H^\eps_b$ is as in \cite{GH}[Lemma 3.1].
\end{proof}

\medskip

Equipped with the previous result, we may now proceed to prove Lemma \ref{l.reduction.2}:

\begin{proof}[Proof of Lemma \ref{l.reduction.2}]
The proof of this is similar to the one in \cite{GH}[Lemma 2.5] for the analogous operator and we sketch below the main steps and the main differences in the argument. For $\delta >0$, let $\eps_0>0$ and $A_\delta \subset \Omega$ be the set of Lemma \ref{l.geometry.s}; From now on, we restrict to the realization $\omega \in A_\delta$.  For every $\eps < \eps_0$ we appeal to Lemma \ref{l.geometry.p} and Lemma \ref{l.geometry.s} to partition $H^\eps= H^\eps_b \cup H^\eps_g$. We recall the definitions of the set $n^\eps \subset \Phi^\eps(D)$ in \eqref{good.set.ppp} in Lemma \ref{l.geometry.p} and of the subdomain $D^\eps_b \subset D$ in \eqref{D_b} of Lemma \ref{l.geometry.s}.

\smallskip

\textit{Step 1. (Construction of $R_\eps$)} For every $\phi \in C^\infty_0(D)$, we define $R_\eps \phi$ as 
\begin{align}\label{definition.Rv}
R_\eps \phi := \begin{cases}
\phi^\eps_b \ \ \ \text{ \ in $D^\eps_b$}\\
\phi^\eps_g \ \ \ \ \text{ in $D \backslash D^\eps_b$,}
\end{cases}
\end{align}
where the functions $\phi^\eps_b$ and $\phi^\eps_g$ satisfy
\begin{align}\label{Rv.bad}
\begin{cases}
\phi^\eps_{b} = 0 \ \text{ in $H^\eps_b$}, \ \ \ \ \phi^\eps_{b}= \phi \ \text{ in $D \backslash D^\eps_b$,}\vspace{0.2cm}\\
\nabla \cdot \phi^\eps_b = 0 \ \text{ in $D$}, \vspace{0.2cm}\\
 \| \phi^\eps_{b} - \phi \|_{L^p}^p \lesssim_p |D^\eps_b| \ \ \text{for every $p\geq 1$,}\vspace{0.2cm}\\
\| \sigma_\eps \nabla \phi_\eps^b \|_{L^2}^2 \lesssim \eps^3 \sum_{z \in \Phi^\eps(D) \backslash n^\eps} \rho_z.\\
\end{cases}
\end{align}
and
\begin{align}\label{Rv.good}
\begin{cases}
  \phi^\eps_g= \phi \ \text{ in $D^\eps_b$}, \ \ \ \ \phi^\eps_g = 0 \ \text{in $H^\eps_g$,}\vspace{0.2cm}\\
  \nabla \cdot \phi^\eps_g =0 \ \ \ \text{in $D$,}\vspace{0.2cm}\\
\| \nabla (\phi^\eps_g - \phi) \|_{L^2(D)}^2 \lesssim \eps^\alpha \sum_{z \in n^\eps(D)} \rho_z,\vspace{0.2cm}\\
\| \phi^\eps_g - \phi \|_{L^p(D)}^p \lesssim \eps^{3 \delta + 3} \sum_{z\in n^\eps(D)}\rho_z^{\frac 3 \alpha + \beta}.
\end{cases}
\end{align}

\bigskip

\textit{ Step 2. (Construction of $\phi^b_\eps$)}We construct $\phi^\eps_b$ as done in \cite{GH}[Proof of Lemma 2.5, Step 2]: For every $z \in J^\eps$, we define
\begin{align}
B_{\theta, z}:= B_{\theta \lambda_\eps \aeps \rho_z}(\eps z), \ \ \ B_{z}:=B_{\lambda_\eps \aeps \rho_z}(\eps z).
\end{align}
It is clear that the previous quantities also depend on $\eps$. However, in order to keep a leaner notation, we skip it in the notation. We use the same understanding for the function $\phi^\eps_b$ and the sets $\{ I_{\eps,i}\}_{i=-3}^{\km}$ and $\{ J_{\eps,i} \}_{i=-3}^{\km}$ of Lemma \ref{l.geometry.s}.

\smallskip

We define $\phi^b$ by solving a finite number of boundary value problems in the annuli 
$$
\bigcup_{z \in I_{k}} B_{\theta,z} \backslash B_z, \ \ \ \text{for $k=-3, \cdots, \km$ }
$$
We stress  that, thanks to Lemma \ref{l.geometry.s}, for every $k= -3, \cdots, \km$, each one of the above collections contains only disjoint annuli. Let $\phi^{(\km +1)}= \phi$. Starting from $k= \km$, at every iteration step $k= \km, \cdots, -3$, we solve for every $z \in I_{\eps,k}$ the Stokes system
\begin{align}
\begin{cases}
-\Delta \phi^{(k)} + \nabla \pi^{(k)} = -\Delta \phi^{(k+1)} \ \ \ &\text{in $B_{\theta,z} \backslash B_z$}\\
\nabla \cdot \phi^{(k)}=0 \ \ \ \ &\text{in $B_{\theta,z}\backslash B_z$}\\
\phi^{(k)}= 0 \ \ \ \ &\text{on $\partial B_{\theta,z}$}\\
\phi^{(k)}= \phi^{(k+1)} \ \ \ &\text{on $\partial B_{z}$.}
\end{cases}
\end{align}
We then extend $\phi^{(k)}$ to $\phi^{(k+1)}$ outside  $\bigcup_{z \in I_k} B_{\theta,z}$ and to zero in $\bigcup_{z \in I_{k}} B_{z}$.

\smallskip

The analogue of inequalities of \cite{GH}[(4.12)-(4.14)], this time with the factor $\eps^{\frac{d-2}{d}}$ replaced by $\eps^\alpha$ and with $d=3$, is
 \begin{equation}
 \label{iteration.estimate}
\begin{aligned}
&\| \nabla \phi^{(k)} \|_{L^2(D)}^2 \lesssim \| \nabla \phi \|_{L^2(D)}^2 + \eps^d \sum_{z \in \cup_{i=k}^{\km}J_{i} } \rho_z  \| \phi \|_{L^\infty(D)}^2,\\
&\| \phi^{(k)} \|_{C^0(D)} \lesssim \| \phi\|_{C^0({D})},
\end{aligned}
\end{equation}
and 
\begin{align}\label{vanishing.set}
\nabla \cdot \phi^{(k)} = 0 \, \, \text{in $D$}, \ \ \ \phi^{i} = 0 \ \ \ \ \text{ in }  \bigcup_{z \in \bigcup_{i=k}^{\km} \I_{i}} B_{\aeps \rho_z}(\eps z).
\end{align}
Moreover,
\begin{equation}
\label{strong.convergence}
\begin{aligned}
& \phi^{(k)} - \phi = 0  \ \ \ \ \text{ in } D \backslash \left(   \bigcup_{z \in \cup_{i=k}^{\km} J_{i}} B_{\theta,z} \right),\\
&\| \nabla (\phi^{(k)} -\phi) \|_{L^2(D)}^2 \lesssim  \sum_{z \in \cup_{i=k}^{\km}J_{i} } \!\!\!\!\! \!\!\!\Bigl( \| \nabla \phi \|_{L^2(B_{\theta,z})}^2 + \eps^d \rho_z  \| \phi \|_{L^\infty(D)}^2\Bigr).
\end{aligned}
\end{equation}
These inequalities may be proven exactly as in \cite{GH}.  We stress that condition \eqref{small.dont.intersect.big} in Lemma \ref{l.geometry.s} is crucial in order to ensure that this construction satisfies the right-boundary conditions. In other words, the main role of Lemma \ref{l.geometry.s} is to ensure that, if at step $k$ the function $\phi^{(k)}$ vanishes on a certain subset of $H^\eps_b$, then $\phi^{(k+1)}$ also vanishes in that set (and actually vanishes on a bigger set).

\smallskip

We set $\phi^\eps_b = \phi^{(-3)}$ obtained by the previous iteration. The first property in \eqref{Rv.bad} is an easy consequence of \eqref{vanishing.set} and the first identity in \eqref{strong.convergence}. We recall, indeed, that thanks to Lemma \ref{l.geometry.s} we have that
 $$
 H^\eps_b =  \bigcup_{z \in \bigcup_{i=-3}^{\km} \I_{i}} B_{\aeps \rho_z}(\eps z), \ \  D^\eps_b= \bigcup_{z \in \cup_{k=-3}^{\km} J_{i}} B_{\theta,z}.
 $$
The second property in \eqref{Rv.bad} follows immediately from \eqref{vanishing.set}. The third line in \eqref{Rv.bad} is an easy consequence of the first line in \eqref{Rv.bad} and the second inequality in \eqref{iteration.estimate}. Finally, the last inequality in \eqref{Rv.bad} follows by multiplying the last inequality in \eqref{strong.convergence} with the factor $\sigma_\eps$ and using that, since $\phi \in C^\infty$, we have that
\begin{align}
\|\sigma_\eps^{-1}\nabla (\phi^\eps_b-\phi) \|_{L^2(D)}^2 \lesssim \|\phi\|_{C^1(D)} \eps^{3-\alpha} \sum_{z \in \cup_{k=-3}^{\km}J_{k} }( \eps^{3\alpha} \rho_z^3 +\aeps \rho_z )\lesssim \eps^3 \sum_{z \in \cup_{k=-3}^{\km}J_{k} }((\eps^{\alpha} \rho_z)^2 + 1)\rho_z.
\end{align}
Thanks to Lemma \ref{l.geometry.s} and the definition of the set $n^\eps$ in Lemma \ref{l.geometry.p}, the previous inequality yields the last bound in \eqref{Rv.bad}.

\bigskip

\textit{Step 3. (Construction of $\phi^\eps_g$)}\, Equipped with $\phi^\eps_b$ satisfying \eqref{Rv.bad}, we now turn to the construction of $\phi_g^\eps$. Also in this case, we follow the same lines of \cite{GH}[Proof of Lemma 2.5, Step 3] and exploit the fact that the set $H^\eps_g$ is only made by balls that are disjoint and have radii $\aeps\rho$ that are sufficiently small. We define the function $\phi_g^\eps$ exactly as in \cite{GH}[Proof of Lemma 2.5, Step 3] with the radius $a_{i,\eps}$ in \cite{GH}[(4.18)] being defined as  $a_{\eps,z}= \aeps\rho_z$ instead of $\eps^{\frac{d-2}{d}}\rho_z$. More precisely, for every $z \in n^\eps$, we write
\begin{align}\label{abbreviations}
a_{\eps,z}:= \aeps \rho_z, \ \ \ \ \ d_{\eps,z}:= \min \biggl\{ \operatorname{dist}(\eps z, D^\eps_b), \frac 1 2 \min_{\tilde z \in n^\eps, \atop z \neq \tilde z} \bigl( \eps | z - \tilde z| \bigr), \eps \biggr\}
\end{align}
and we set
\begin{align}\label{notation.Allaire}
T_z = B_{a_{\eps,z}} (\eps z), \ \ B_z:= B_{\frac {d_{\eps,z}}{ 2}} (\eps z), \ \ B_{2,i}:= B_{d_{\eps,z}}(\eps z), \ \ C_z:= B_z \backslash T_z, \ \ D_z:= B_{2,z} \backslash B_z.
\end{align}
With this notation, we define the function $\phi^\eps_g$ as in \cite{GH}[(4.19)-(4-21)]. Also in this case, identities, \cite{GH}[(4.22)-(4.23)] hold. By Lemma \ref{l.geometry.p}  It is immediate to see that this construction satisfies the first two properties in \eqref{Rv.good}.

\smallskip

We now turn to show the remaining part of \eqref{Rv.good}: We remark that, since $z \in n^\eps(D)$, Lemma \ref{l.geometry.p} and definition \eqref{abbreviations} yield that
\begin{align}\label{bounds.a.z}
(\frac{a_{\eps,z}}{d_{\eps,z}}) \leq \eps^{\frac{\gamma}{2}},\ \ \ a_{\eps,z}^3 \leq \eps^{3+ 3\gamma} \rho_{z}^{\frac 3 \alpha + \beta},
\end{align}
where $\gamma>0$ is as in Lemma \ref{l.geometry.s} and $\beta>0$ is as in \eqref{integrability.s}.
Equipped with the previous bounds, the analogue of estimates \cite{GH}[(4.26)-(4.30)] yield that for every $z \in n^\eps(D)$
\begin{align}\label{inequalities.phi.good}
\| \nabla (\phi^\eps_g - \phi)\|_{L^2(D_i)}^2 \lesssim \eps^\gamma \eps^\alpha \rho_z, \ \ \ \ \ \  \|  \phi^\eps_g - \phi \|_{L^p(D_i)}^p \lesssim \eps^{\gamma p} d_{\eps,z}^3\\
 \| \nabla (\phi^\eps_g - \phi)\|_{L^2(C_i)}^2 \lesssim \eps^\alpha \rho_z,\ \ \ \ \ \ \|\phi^\eps_g - \phi\|_{L^p(C_i)}^p \lesssim \eps^{3\gamma + 3}\rho_z^{\frac 3 \alpha + \beta}\\
\| \nabla (\phi^\eps_g - \phi)\|_{L^2(T_i)}^2 + \|\phi^\eps_g - \phi\|_{L^p(T_i)}^p  \lesssim \eps^{3\gamma + 3}\rho_z^{\frac 3 \alpha + \beta}.
 \end{align}
Since $B_{2,z} = D_z \cup C_z\cup T_z$ and the function $\phi^\eps_g - \phi$ is supported only on $\bigcup_{z\in n^\eps(D)}B_{2,z}$, we infer that
for every $z \in n^\eps(D)$, it holds 
\begin{align}
\| \nabla (\phi^\eps_g - \phi) \|_{C^0(B_z)} \lesssim \eps^\alpha\rho_z +  \eps^{3 \gamma + 3}\rho_z^{\frac 3 \alpha + \beta}, \ \ \ 
\| \phi^\eps_g - \phi \|_{L^p(B_z)}^p \lesssim \eps^{3 \delta + 3} \rho_z^{\frac 3 \alpha + \beta}.
\end{align}
Summing over $z \in n^\eps$ we obtain the last two inequalities in \eqref{Rv.good}. We thus established \eqref{Rv.good} and completed the proof of Step 1.

\bigskip

{\bf Step 4. (Properties of $R_\eps$)}\, We now argue that $R_\eps$ defined in Step 1. satisfies all the properties enumerated in Lemma \ref{l.geometry.s}. It is immediate to see from \eqref{Rv.good} and \eqref{Rv.bad} that $R_\eps \phi$ vanishes on $H^\eps$ and is divergence-free in $D$. Inequalities \eqref{aver.R} also follow easily from the inequalities in \eqref{Rv.good} and \eqref{Rv.bad} and arguments analogous to the ones in Lemma \ref{l.oscillating.p}. We stress that, in this case, we appeal to condition \eqref{integrability.s} and, in the expectation, we need to restrict to the subset $A_\delta \subset \Omega$ of the realizations for which $R_\eps$ may be constructed as in Step 1.

\smallskip

To conclude the proof, it only remains to tackle \eqref{meas.R}. We do this by relying on the same ideas used in Lemma \ref{l.oscillating.p} in the case of the Poisson equation. We use the same notation introduced in Step 2. We begin by claiming that \eqref{meas.R} reduces to show that for every $i=1, \cdots , 3$
\begin{align}\label{meas.R.3}
\lim_{\eps \downarrow 0}\E \bigl[ |\sum_{z \in n^\eps(D)} \int_{\partial B_z}(\partial_\nu w_{\eps,z}^i - q_{\eps,z}^i \nu_i) \phi_i v_{\eps,i} - K^{-1}\int_D v_{\eps, i} \phi_i | \bigr]  = 0,
\end{align}
where
 $$
w_{\eps,z}^i(x) := \bar w_{i}(\frac{x - \eps z}{\aeps \rho_z}), \ \ \ q_{\eps,z}^i(x)= (\aeps \rho_z)^{-1} \bar q_i(\frac{x - \eps z}{\aeps \rho_z}), \ \ \ x\in {B_z},
$$
with $(\bar w_i , \bar q_i)$ solving 
\begin{align}\label{cell.stokes}
\begin{cases}
\Delta \bar w_i- \nabla \bar q_i = 0 \ &\text{in $\Rd \backslash B_1$}\\
\nabla \cdot \bar w_i  = 0 \ &\text{in $\Rd \backslash B_1$}\\
\bar w_i = e_i  \ &\text{on $\partial B_1$}\\
\bar w_i \to 0 \ \ \ &\text{for $|x| \to +\infty$}.
\end{cases}
\end{align}

\smallskip

We use the definition of $R_\eps \phi$ to rewrite for every $\omega \in \1_{A_\delta}$
\begin{align}\label{rewrite.term}
 \int_D \nabla v_\eps \cdot \nabla R_\eps(\phi)&  =  \int_{D} \nabla v_\eps \cdot \nabla( \phi^\eps_g - \phi) + \int_{D} \nabla v_\eps \cdot \nabla (\phi^\eps_b - \phi) + \int_D \nabla v_\eps \cdot \nabla \phi.
 \end{align}
We claim that, after multiplying by $\1_{A_\delta}$ and taking the expectation, the last two integrals on the right-hand side vanish in the limit.
In fact, using the triangle and Cauchy-Schwarz's inequalities and combining them with \eqref{Rv.bad} and the uniform bounds for $\{v_\eps \}_{\eps>0}$ we have that
\begin{align}\label{first.term}
\limsup_{\eps \downarrow 0}\E \bigl[\1_{A_\delta} |\int_{D} \nabla v_\eps \cdot \nabla( \phi^\eps_b - \phi) + \int_D \nabla v_\eps \cdot \nabla \phi|\bigr] \lesssim
\limsup_{\downarrow 0}\E \bigl[\1_{A_\delta} \eps^3 \sum_{z \in \Phi^\eps(D) \backslash n^\eps(D)} \rho_z \bigr]^{\frac 1 2}\stackrel{\eqref{bad.cap.vanishes}}{=}0.
\end{align}

Hence, we show \eqref{meas.R} provided that
\begin{align}
\lim_{\eps \downarrow 0}\E \bigl[ \1_{A_\delta}|  \int_{D} \nabla v_\eps \cdot \nabla( \phi^\eps_g - \phi) - K^{-1}\int_D v \cdot \phi | \bigr]  = 0.
\end{align}
Furthermore, since $\sigma_\eps^{-2}v_\eps \rightharpoonup v$ in $L^p(\Omega \times D)$, $p \in [1, 2)$ and $\phi \in C^\infty_0(D)$, it suffices to prove that
\begin{align}
\lim_{\eps \downarrow 0}\E \bigl[ \1_{A_\delta}|  \int_{D} \nabla v_\eps \cdot \nabla( \phi^\eps_g - \phi) -K^{-1}\int_D \sigma_\eps^{-2}v_\eps \cdot  \phi | \bigr]  = 0.
\end{align}
We further reduce this to \eqref{meas.R.3} if
\begin{align}\label{meas.R.6}
\lim_{\eps\downarrow 0} \E\bigl[  \1_{A_\delta}|  \int_{D} \nabla v_\eps \cdot \nabla( \phi^\eps_g - \phi) - \sum_{z \in n^\eps(D)} \int_{\partial B_z}(\partial_\nu w_{\eps,z}^i - q_{\eps,z}^i \nu_i) \phi_i v_{\eps,i}|  \bigr] =0.
\end{align}
An argument analogous to the one outlined in \cite{GH} to pass from the left-hand side of \cite{GH}[(4.34)] to the one in \cite{GH}[(4.39)] yields that
\begin{align}\label{meas.R.5}
\lim_{\eps\downarrow 0} \E\bigl[  \1_{A_\delta}|\int_{D} \nabla v_\eps \cdot \nabla( \phi^\eps_g - \phi) - \sum_{z \in n^\eps(D)} \phi_i(\eps z)\int_{\partial B_z}(\partial_\nu w_{\eps,z}^i - q_{\eps,z}^i \nu_i)  v_{\eps,i}|  \bigr] =0.
\end{align}
We stress that in the current setting we use again the uniform bounds on the sequence $\sigma_\eps^{-1}\nabla u_\eps$ and we rely on estimates \eqref{inequalities.phi.good} instead of \cite{GH}[(4.26)-(4.30)]. To pass from \eqref{meas.R.5} to \eqref{meas.R.3} it suffices to use the smoothness of $\phi$ and, again, the bounds on the family $\{v_\eps\}_{\eps>0}$. We thus established that \eqref{meas.R} reduces to \eqref{meas.R.3}.

\bigskip

We finally turn to the proof of \eqref{meas.R.3}. By the triangle inequality it suffices to show that
\begin{align}\label{meas.R.4}
\lim_{\eps \downarrow 0}\E \bigl[ |\langle \, \tilde \mu_{\eps,i} ; \phi_i v_{\eps,i} \, \rangle -  K^{-1} \int v_{\eps,i} \phi_i | \bigr]  = 0 \ \ \ \text{for all $i=1, 2 , 3$}
\end{align}
where the measures $\mu_{\eps,i} \in H^{-1}(D)$, $i=1, 2 ,3$, are defined as
\begin{align}\label{mu.tilde}
\tilde \mu_{\eps,i} := \sum_{z \in n^\eps(D)} g_{\eps,z}^i \delta_{\partial B_z}, \ \ \ g_{\eps,z}^i := (\partial_\nu w_{\eps,z}^i - q_{\eps,z}^i \nu_i ).
\end{align}
We focus on the limit above in the case $i=1$. The other values of $i$ follow analogously. We skip the index $i=1$ in all the previous objects. As done in the proof of  \eqref{conv.Delta} in Lemma \ref{l.oscillating.p}, it suffices to show that there exists a positive exponent $\kappa >0$ such that
\begin{align}\label{Delta.meas.stokes}
\E \bigl[ |\langle \tilde\mu_{\eps} ; \phi v_\eps \rangle - K^{-1} \int v_\eps \phi | \bigr] \leq \eps^\kappa \biggl(\int_D |\sigma_\eps^{-1} \nabla v_\eps|^2 + \int_D |\sigma_\eps^{-2} v_\eps|^2 \biggr)^{\frac 12 } + r_\eps,
\end{align}
with $\lim_{\eps \downarrow 0} r_\eps= 0$. From this, \eqref{meas.R.4} follows immediately thanks to the bounds assumed for $\{v_\eps\}_{\eps>0}$.

\smallskip

The proof of \eqref{Delta.meas.stokes} is similar to \eqref{conv.Delta}: For $k\in \N$ to be fixed, we apply once Lemma \ref{Kohn_Vogelius.general} to this new measure $\sigma_{\eps}^{-2}\tilde \mu_{\eps}$, with $\mathcal{Z}= \{\eps z\}_{z \in n^\eps(D)}$, $\mathcal{R} = \{ d_{\eps,z}\}_{z\in \tilde\Phi^\eps(D)}$, $\{g_{z,\eps}\}_{z \in \tilde\Psi^\eps(D)}$ and with the partition $\{ K_{\eps,z,k}\}_{z\in N_{k,\eps}}$ constructed in Step 1 in the proof of Lemma \ref{conv.measure}. This implies that 
\begin{align}\label{measures.mu.i}
\|\sigma_{\eps}^{-2}\tilde \mu_\eps - \tilde\mu_\eps(k) \|_{H^{-1}} &\lesssim k\eps \bigl(\sigma_{\eps}^{-2}\sum_{z \in \tilde \Phi^\eps(D)} d_{\eps,z}^{-1} \int_{\partial B_z} |g_{z,\eps}|^2  \bigr)^{\frac 1 2}\\
\tilde\mu_\eps(k) &:= \sum_{x \in N_{\eps,k}} \bigl(\frac{1}{|K_{\eps,x,k}|}\sum_{z \in N_{k,x,\eps}}\sigma_{\eps}^{-2} \int_{\partial B_z} g_{\eps,z} \bigr)\1_{K_{\eps,k,x}}
\end{align}

\smallskip

Appealing to the definition of $g_{\eps,z}$ and to the bounds for $(\bar w, \bar q)$ obtained in \cite{Allaire_arma1}[Appendix], for each $z \in n^\eps(D)$ it holds that
\begin{align}\label{bounds.g.i}
\sigma_\eps^{-2}\int_{\partial B_z} |g_{\eps, z}|^2 \lesssim \eps^3 \rho_z^2 d_z^{-2}, \ \ \ |\sigma_\eps^{-2}\int_{\partial B_z} g_{\eps,z} - 6\pi  \eps^3\rho_z| \lesssim \eps^3\rho_z (\frac{\eps^\alpha \rho_z}{\eps d_z}) \stackrel{\eqref{bounds.a.z}}{\lesssim} \eps^{3+ \frac \gamma 2}.
\end{align}
This, \eqref{measures.mu.i}, \eqref{mu.tilde}, the triangle inequality and the definition of $K^{-1}$, imply that
\begin{align}
\E \bigl[ |\langle \tilde\mu_{\eps} ; \phi v_\eps \rangle - K^{-1} \int v_\eps \phi | \bigr] &\lesssim  \bigl(\eps^3 \sum_{z \in n^\eps(D)} \rho_z^2 d_z^{-3}\bigr)^{\frac 1 2}  \bigl(\int_D |\nabla(\phi v_\eps)|^2\bigr)^{\frac 1 2}\\
&\quad\quad  + \bigl( \int_D| \frac{6}{4}m_\eps(k) - k^{-1}|^2 \bigr)^{\frac 12 } \bigl(\int_D |\sigma_\eps^{-2} v_\eps|^2 \bigr)^{\frac 12 } + \eps^\gamma,
\end{align}
where $\mu_\eps(k)$ is as in Step 2 of Lemma \ref{conv.measure} and $k$ is as in Theorem \ref{t.main}, $(a)$. From this, we argue \eqref{meas.R} exactly as done in Step 2-5 of Lemma \ref{conv.measure}. We established Lemma \ref{l.reduction.2}. 
\end{proof}

\section{Appendix}\label{s.appendix}
\begin{proof}[Proof of Lemma \ref{zero.one.law}] 

$(i) \Rightarrow (ii)$: We prove that
\begin{align}\label{zero.volume}
\lim_{\eps \downarrow 0} |H^\eps \cap D| = 0 \ \ \ \ \text{$\P$-almost surely.}
\end{align}
We do this by bounding
\begin{align}
 |H^\eps \cap D| \leq \sum_{z \in \Phi^\eps(D)} (\aeps\rho \wedge 1)^3 \leq \eps^{3\alpha}\sum_{z \in \Phi^\eps(D)} \rho^3 \1_{\rho < \eps^{-\alpha}} + \sum_{z \in \Phi^\eps(D)}\1_{\rho > \eps^{-\alpha}}  \sum_{z \in \Phi^\eps(D)} \rho_z
 \end{align}
and, for $0 < \delta < \alpha -1$,
\begin{align}
 |H^\eps \cap D| &\leq \eps^{3+ 3\delta} \#(\Phi^\eps(D)) +  \eps^{3\alpha}\sum_{z \in \Phi^\eps(D)} \rho^3 \1_{\eps^{-(\alpha-1)+\delta} <  \rho < \eps^{-\alpha}} + \sum_{z \in \Phi^\eps(D)}\1_{\rho > \eps^{-\alpha}}\\
 &{\lesssim} \eps^{3+ 3\delta} \#(\Phi^\eps(D)) + \sum_{z \in \Phi^\eps(D)} \rho^{\frac 3 \alpha} \1_{\rho > \eps^{-(\alpha-1)+\delta}} + \eps^3 \sum_{z \in \Phi^\eps(D)} \rho^{\frac 3 \alpha }\1_{\rho > \eps^{-\alpha}} \\
 &\lesssim  \eps^{3+ 3\delta} \#(\Phi^\eps(D)) + \sum_{z \in \Phi^\eps(D)} \rho^{\frac 3 \alpha} \1_{\rho > \eps^{-(\alpha-1)+\delta}} .
\end{align}
Since $\Phi$ is a Poisson point process and we assumed \eqref{integrability.p}, the right-hand side above vanishes $\P$-almost surely in the limit $\eps \downarrow 0$. This concludes the proof of \eqref{zero.volume} and immediately yields $(ii)$.

\smallskip

$(ii) \Rightarrow (i)$: This is equivalent to show that if $\E\bigl[ \rho^{\frac 3 \alpha} \bigr]= +\infty$ then for $\P$-almost every realization and $\eps$ small enough, the set $D^\eps =\emptyset$. With no loss of generality, let us assume that diam$(D) = 1$. We claim that if $\eps_j:= 2^{-j}$, then the events
\begin{align}
A_j:= \biggl\{ B_2(0) \subset B_{\frac 1 4 \eps_j^{\alpha} \rho_z} (\eps_j z) \, \, \text{for some $z \in \Phi^{\eps_j} (D) \backslash \Phi^{\eps_{j-1}}(D)$} \biggr\}, \ \ \ j\in \N
\end{align}
satisfy 
\begin{align}\label{borel.cantelli.ii}
\sum_{j\in \N} \P(A_j) = + \infty.
\end{align}
Since the events are independent, by Borel-Cantelli's Lemma we conclude that for $\P$-almost every realization there exists $j_0 \in \N$ such that for all $j \geq j_0$ we have $B_{2}(0) \subset B_{\frac 1 4 \eps_j^\alpha \rho_z}(\eps_j \rho_z)$, for some $ z\in \Phi^{\eps_j}(D)$.

We now argue that this suffices to prove that, for $\P$-almost every realization and all $\eps < 2^{-j_0}$, with $j_0 \in \N$ as above, there is an element $z \in \Phi^\eps(D)$ such that $B_1(0) \subset B_{\aeps \rho_z}(\eps z)$. Let, indeed, assume that $\eps_{j+1} \leq \eps \leq \eps_j$. Then, since $\Phi^{\eps_j}(D) \subset \Phi^\eps(D)$, we may find  $z \in \Phi^\eps(D)$ such that  $B_{2}(0) \subset B_{\frac 1 4 \eps_j^\alpha \rho_z}(\eps_j \rho_z)$, i.e. $|\eps_j z | \leq \frac 1 4  \eps_j^\alpha \rho_z - 2$. This, in particular, yields that
$$
|\eps z | \leq \frac 1 4 (\frac{\eps_j}{\eps})^{\alpha-1} \eps^\alpha \rho_z - 2\frac{\eps}{\eps_j} \stackrel{\alpha < 3}{\leq} \eps^\alpha \rho_z - 1,
$$
i.e. $B_1(0)\subset B_{\eps^\alpha\rho_z}(\eps z)$ for $ z\in \Phi^\eps(D)$.

\bigskip

We argue \eqref{borel.cantelli.ii}:  Let
$$
B_j =  \{(z, \rho_z) \in (\frac{1}{\eps_{j}} D \backslash \frac{1}{\eps_{j-1}}D) \times \R_+ \, \colon \,  \eps_j |z | + 2 < \frac 1 4 \eps_j^\alpha \rho_z \}.
$$
then, if $\Psi= (\Phi; \mathcal{R})$ denotes the extended point process on $\R^d \times [1; +\infty)$ with intensity $\tilde \lambda (x, \rho) = \lambda f(\rho)$ (c.f. Section \ref{s.process}), we rewrite
\begin{align}
\P( A_j) = 1 - \P( \Psi( B_j ) = 0 )& = 1- \exp\biggl( -\lambda \int_{\frac{1}{\eps_{j}} D \backslash \frac{1}{\eps_{j-1}}D} \int_1^{+\infty} \1_{B_j}(x) f(\rho) \d\rho \d x   \biggr).
\end{align}
Since
\begin{align}
  \int_{\frac{1}{\eps_{j}} D \backslash \frac{1}{\eps_{j-1}}D} \int_1^{+\infty} \1_{B_j}(x) f(\rho) \d\rho \d x  &= \int_1^{+\infty} f(\rho) \1_{\rho > \eps_j^{-\alpha}} \int_{\frac{1}{\eps_{j}} D \backslash \frac{1}{\eps_{j-1}}D} \d x \\
 & \gtrsim |D|\eps_j^{-3}\int_1^{+\infty} \1_{\rho > 12\eps_j^{-\alpha}} \gtrsim \eps_j^{-3}\P(  12\eps_j^{-\alpha} < \rho < 24 \eps_j^{-\alpha}),
\end{align}
we bound
\begin{align}
\P( A_j)  \geq 1- \exp\biggl( C \eps_j^{-3} \P(  12\eps_j^{-\alpha} < \rho < 24 \eps_j^{-\alpha})\biggr).
\end{align}
Recalling that $\eps_j = 2^{-j}$, we may sum over $j \in \N$ in the previous inequality and get that
\begin{align}
\sum_{j\in \N}\P( A_j) \geq  \sum_{j\in \N} (1- \exp\bigl\{ - C \eps_j^{-3} \P(12 \eps_{j}^{-\alpha}\leq  \rho \leq 24\eps_{j+1}^{-\alpha}) \bigr\}) 
\end{align}
We may assume that $\eps_j^{-3} \P( 12\eps_{j}^{-\alpha}\leq  \rho \leq 24\eps_{j+1}^{-\alpha}) \to 0$. If not, indeed, \eqref{borel.cantelli.ii} immediately follows. Since $\eps_j = 2^{-j}$, we have that 
\begin{align}
\sum_{j\in \N}\P( A_j ) \gtrsim \sum_{j\in \N} \eps_j^{-3} \P( 12\eps_{j}^{-\alpha}\leq  \rho \leq 24\eps_{j+1}^{-\alpha}) \gtrsim \sum_{j\in \N} \E\bigl[ \rho^{\frac 3 \alpha} \1_{12 \eps_{j}^{-\alpha}\leq  \rho \leq 24\eps_{j+1}^{-\alpha}}  \bigr] \simeq \E \bigl[ \rho^{\frac 3 \alpha} \bigr].
\end{align}
By the assumption $\E\bigl[ \rho^{\frac 3 \alpha}\bigr]= + \infty$, this establishes \eqref{borel.cantelli.ii}. The proof of Lemma \ref{zero.one.law} is complete.
\end{proof}

\begin{proof}[Proof of Lemma \ref{l.borel.cantelli}]
The proof of this lemma relies on an application of Borel-Cantelli's lemma and follows the same lines of the one in \cite{GH}[Lemma 5.1]. 

\smallskip

For $\kappa> 0$, let $\km = \lfloor \frac 1 \kappa \rfloor +1$. We partition the set of centres $\Phi^\eps(D)$ in terms of magnitude of the associated radii: We write $\Phi^\eps(D)=\bigcup_{k=-3}^{k_{\textrm{max}}} I_{\eps, k}$ with
\begin{align}
I_{\eps, -3}:= \{ z \in \Phi^\eps(D) \, \colon \, \eps^\alpha \rho_z < \eps^{1+2\kappa} \}, \ \ \ \ \  I_{\eps, \km}:= \bigl\{ z \in \Phi^\eps(D) \, \colon \, \aeps \rho_z \geq \eps^{1- \km \kappa} \bigr\} \\
 I_{\eps, k}:=  \{ z \in \Phi^\eps(D) \, \colon \, \eps^{1- k \kappa } \leq \eps^\alpha \rho_z < \eps^{1-(k+1) \kappa} \} \ \ \  \text{for  $-2 \leq k \leq \km -1$.}
\end{align}
Note that,  up to a relabelling of the indices $k=-3, \cdots, \km$, the previous partition satisfies \eqref{partition.magnitude} of Lemma \ref{l.borel.cantelli}.

\smallskip

For any set $\chi \subset \Phi^\eps(D)$, we say that $A$ contains a chain of length $M\in \N$, $M \geq 2$, if there exist $z_1, \cdots, z_M \in \chi$ such that $B_{4\aeps \rho_{z_i}}(\eps z_i) \cap B_{4\aeps \rho_{z_j}}(\eps z_j)\neq \emptyset$, for all $i, j = 1, \cdots M$. We say that $A$ contains a chain of size $1$ if and only if $A \neq \emptyset$.

\smallskip

Equipped with this notation, \eqref{max.radii} follows provided we argue that for $\kappa$ suitably chosen, there exists  $k_0 < \km -1$ such that, $\P$-almost surely and for $\eps$ small, the sets $\{ I_{\eps,k} \cup I_{\eps, k+1} \}_{k=k_0}^{\km}$ are empty. This is equivalent to prove that they do not contain any chain of size at least $1$. Similarly, \eqref{no.overlapping.borel} is obtained if we find an $M \in \N$ such that $\P$-almost surely and for $\eps$ small enough, all the sets $\{ I_{\eps,k} \cup I_{\eps, k+1} \}_{k=-3}^{k_0}$ contain chains of length at most $M-1$.

\smallskip

For $M \in \N$ and $k= -3, \cdots, \km$, we define the events
$$
A_{k, \eps,M}:= \bigl\{  I_{\eps,k} \cup I_{\eps, k+1} \,  \text{contains a chain of length at least $M$}\bigr\}.
$$
We claim that if $\kappa < \min \bigl( \frac{\beta}{6} ; \frac{\alpha^2\beta}{6+ 2\alpha\beta} \bigr)$, then there exists $k_0 \in \N$, $k_0 < \km$ such that for every $k  \in \{ k_0, \cdots,  \km \}$
\begin{align}\label{limsup.borel.1}
\P( \bigcap_{\eps_0> 0} \bigcup_{\eps < \eps_0} A_{k, \eps,1}) =0
\end{align}
and there exists $M=M(\alpha, \beta) \in \N$ such that for every $k = -3, \cdots, k_0 -1$, also
\begin{align}\label{limsup.borel.2}
\P( \bigcap_{\eps_0> 0} \bigcup_{\eps < \eps_0} A_{k, \eps,M}) =0.
\end{align}
These claims immediately yield \eqref{max.radii} and \eqref{no.overlapping.borel} and conclude the proof of Lemma \ref{l.borel.cantelli}, $(i)$. 

\smallskip

The argument for \eqref{limsup.borel.1} and \eqref{limsup.borel.2} relies on an application of Borel-Cantelli's Lemma and is analogous to the one for \cite{GH}[Lemma 5.1].  We thus only sketch the proof. As shown in \cite{GH}[Proof of Lemma 5.1, (5.5) to (5.6)], up changing the constant $4$ in the definition of chain, we may reduce to prove \eqref{limsup.borel.1}-\eqref{limsup.borel.2} for a sequence $\{ \eps_j \}_{j \in \N}= \{ r^j\}_{j \in \N}$, with $r \in (0,1)$.

\smallskip

Using stationarity and the independence properties of the Poisson point process $(\Phi, \mathcal{R})$, it is easy to see that
\begin{align}\label{recursion.probability}
\P(A_{k, \eps, M})& \lesssim p_{0} p_1^{M-1}
\end{align}
where
\begin{align}
p_0&:= \P(\{\text{There is $z \in \Phi^\eps(D)$ with $\aeps \rho_z >\eps^{1- k \kappa }$}\}), \\
p_1&:= \P(\{\text{There is $z \in \Phi^\eps(B_{\eps^{1-(k+1)\kappa}(0)})$ with $\aeps \rho_z >\eps^{1- k \kappa }$}\}).
\end{align}
Using the moment condition \eqref{integrability.s} and provided $\kappa< \frac{\beta}{6}$ this yields
\begin{align}
p_0 \lesssim \eps^{\alpha\beta + (k \kappa- 1)(\frac 3 \alpha + \beta)}, \ \ \ p_1 \lesssim \eps^{\alpha \beta}.
\end{align}
Hence, by \eqref{recursion.probability}, we have that
\begin{align}
\P(A_{k, \eps,M})& \lesssim \eps^{\alpha\beta + (k \kappa- 1)(\frac 3 \alpha + \beta)}\eps^{(M-1)\alpha \beta}.
\end{align}  

\smallskip

On the one hand, if $\kappa < \min \bigl(\frac{\beta}{6} ;  \frac{\alpha^2\beta}{6+ 2\alpha\beta}\bigr)$, then we may pick $k_0:= \lfloor \frac{1}{\kappa} \frac{\alpha^2\beta}{6 + 2 \alpha \beta} \rfloor$ and observe that for every $k  \in \{ k_0, \cdots,  \km \}$ we have that
\begin{align}
\P(A_{k, \eps,1})& \lesssim \eps^{\frac 1 2 \alpha\beta}.
\end{align}
On the other hand, if $M \in \N$ is chosen big enough, for every $k=-3, \cdots, k_0$ also
\begin{align}
\P(A_{k, \eps,M}) \lesssim  \eps^{\frac 1 2 \alpha\beta}.
\end{align}   
Using these two bounds, we may apply Borel-Cantelli to the family of events $\{ A_{k, \eps_j, M}\}$ and conclude \eqref{limsup.borel.1} and \eqref{limsup.borel.2}.

\medskip

We now turn to case (ii). Identity \eqref{limsup.borel.2} may be rewritten as
\begin{align}
\P( \bigcup_{\eps_0> 0} \bigcap_{\eps < \eps_0} (\bigcup_{k =-3}^{k_0} A_{k, \eps,M})^c ) = 1.
\end{align}
This implies that for every $\delta > 0$, we may pick $\eps_0>0$ such that the set $\P(\bigcap_{\eps < \eps_0} (\bigcup_{k =-3}^{\km} A_{k, \eps,M})^c  )> 1-\delta$. The statement of $(ii)$ immediately follows if we set $A_\delta := \bigcap_{\eps < \eps_0} (\bigcup_{k =-3}^{\km} A_{k, \eps,M})^c$. The same argument  applied to \eqref{limsup.borel.1} implies the same statement for \eqref{max.radii}. 
\end{proof}

\begin{lem}\label{Kohn_Vogelius.general}
Let $\mathcal{Z}:= \{ z_i \}_{i \in I} \subset D$ be a collection of points and let $\mathcal{R}:=\{ r_i \}_{i\in I} \subset \R_+$ such that the balls $\{ B_{r_i}(z_i) \}_{i\in I}$ are disjoint. We define the measure 
\begin{align}\label{measure.M}
M:= \sum_{i \in I} g_i \delta_{\partial B_{r_i}(z_i)} \in H^{-1}(D),
\end{align}
where $g_i \in L^2( \partial B_{r_i}(z_i) )$.  Then, there exists a constant $C< +\infty$ such that for every Lipschitz and (essentially) disjoint covering $\{ K_j\}_{j \in J}$ of $D$ such that
\begin{align}\label{contains.balls}
B_{2r_i}(z_i) \subset K_j \ \ \ \text{OR} \ \ \ B_{r_i}(z_i) \cap K_j= \emptyset \ \ \ \ \text{for every $i \in I$, $j \in J$}
\end{align}
we have that
\begin{align}\label{KV.2}
\| M -m \|_{H^{-1}(D)} \leq C  \max_{j \in J}\mathop{diam}(K_j) \bigl( \sum_{i \in I} \|g_i \|_{L^2(\partial B_{r_i}(z_i))}^2 r_i^{-1}  \bigr)^{\frac 1 2},
\end{align}
with 
\begin{align}\label{mean.m}
m := \sum_{j \in J} \bigl(\frac{1}{|K_{j}|} \sum_{i \in I, \atop z_i \in K_j} \int_{\partial B_{r_i}(z_i)} g_i \bigr) \1_{K_{j}}.
\end{align}
\end{lem}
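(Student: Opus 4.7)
\emph{Plan of proof.} The strategy is to evaluate $\langle M-m, v\rangle$ directly against an arbitrary test $v\in H^1_0(D)$ and to extract the factor $\max_{j}\mathop{diam}(K_j)$ from a Poincar\'e-type step on each cell. From \eqref{measure.M} and \eqref{mean.m}, since the constant ``average part'' of $v$ on each $K_j$ cancels between $M$ and $m$, one obtains
\[
\langle M-m,v\rangle = \sum_{j\in J}\sum_{i\in I\colon z_i\in K_j}\int_{\partial B_{r_i}(z_i)} g_i\,(v - \bar v_{K_j}),\qquad \bar v_{K_j}:=\frac{1}{|K_j|}\int_{K_j} v.
\]
Condition \eqref{contains.balls} guarantees that each $i$ is associated with a unique cell $K_{j(i)}$ satisfying $B_{2r_i}(z_i)\subset K_{j(i)}$, so that the splitting is well-defined and compatible with the partition. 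I would then control each surface integral by Cauchy--Schwarz together with the scaled trace inequality applied, inside $B_{r_i}(z_i)$, to $v-\bar v_{K_{j(i)}}$:
\[
\|v-\bar v_{K_{j(i)}}\|_{L^2(\partial B_{r_i}(z_i))}^2\lesssim r_i^{-1}\|v-\bar v_{K_{j(i)}}\|_{L^2(B_{r_i}(z_i))}^2+r_i\|\nabla v\|_{L^2(B_{r_i}(z_i))}^2.
\]

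After multiplying by $r_i$ and summing in $i$, two disjointness features enter. For each fixed $j$ the family $\{B_{r_i}(z_i)\}_{z_i\in K_j}$ is disjoint and contained in $K_j$, so Poincar\'e's inequality on the Lipschitz domain $K_j$ gives
\[
\sum_{i\colon z_i\in K_j}\|v-\bar v_{K_j}\|_{L^2(B_{r_i}(z_i))}^2 \leq \|v-\bar v_{K_j}\|_{L^2(K_j)}^2\lesssim \mathop{diam}(K_j)^2\|\nabla v\|_{L^2(K_j)}^2,
\]
and summing over the essentially disjoint cells produces a single $\max_j\mathop{diam}(K_j)^2\|\nabla v\|_{L^2(D)}^2$. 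The residual $\sum_i r_i^2\|\nabla v\|_{L^2(B_{r_i})}^2$ is handled analogously, using the bound $r_i\le\tfrac12\mathop{diam}(K_{j(i)})$ coming from $B_{2r_i}\subset K_{j(i)}$ together with the disjointness of the balls $B_{r_i}$. A final Cauchy--Schwarz in $i$,
\[
|\langle M-m,v\rangle|\le\Bigl(\sum_{i\in I} \|g_i\|_{L^2(\partial B_{r_i}(z_i))}^2 r_i^{-1}\Bigr)^{1/2}\Bigl(\sum_{i\in I} r_i\,\|v-\bar v_{K_{j(i)}}\|_{L^2(\partial B_{r_i}(z_i))}^2\Bigr)^{1/2},
\]
combined with the bound just obtained on the second factor, delivers \eqref{KV.2} by duality, after taking the supremum over $v\in H^1_0(D)$ with $\|\nabla v\|_{L^2(D)}\leq 1$.

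The only subtle point is the choice of the trace inequality on the interior ball $B_{r_i}(z_i)$ rather than on the annulus $B_{2r_i}\setminus B_{r_i}$: working inside the disjoint family $\{B_{r_i}\}_i$ is what allows the single-cell Poincar\'e step to collapse the multiple $L^2$ contributions on $K_j$ into one norm on $K_j$, which is the mechanism that produces the gain $\mathop{diam}(K_j)$. The rest is essentially bookkeeping; the uniformity of the Poincar\'e constants over the Lipschitz cells $K_j$ is a mild technicality absorbed in the constant $C$ of the statement.
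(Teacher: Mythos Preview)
Your argument is correct and is the natural route to this estimate. The paper itself gives no independent proof here: it simply states that the lemma is ``a simple generalization of \cite{G}[Lemma 5.1], where the harmonic functions $\{\partial_n v_i\}_{i\in I}$ are replaced by a more general collection of functions $\{g_i\}_{i\in I}$.'' Your scheme---subtract the cell mean $\bar v_{K_j}$, apply Cauchy--Schwarz and the scaled trace inequality on each $B_{r_i}$, then Poincar\'e--Wirtinger on each $K_j$, and close with a weighted Cauchy--Schwarz in $i$---is precisely how one carries out such a generalisation, and all the bookkeeping (uniqueness of $j(i)$, $B_{r_i}\subset K_{j(i)}$, $2r_i\le\mathop{diam}(K_{j(i)})$) is handled correctly via \eqref{contains.balls}.

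One caveat on the step you label a ``mild technicality'': the Poincar\'e--Wirtinger constant on a Lipschitz domain is \emph{not}, in general, controlled by the diameter alone---it depends on the Lipschitz character (Payne--Weinberger gives $C_P(K)\le\mathop{diam}(K)/\pi$ only for convex $K$). Read literally, the lemma asserts a constant $C$ independent of the covering, which your argument does not quite deliver without a uniform shape assumption on the $K_j$. In the paper's applications the cells $K_{\eps,k,x}$ are built from a cube of side $k\eps$ with unit cubes of side $\eps$ attached or removed (cf.\ \eqref{covering.Poisson}--\eqref{properties.covering}), so a uniform John or star-shapedness condition holds and the Poincar\'e constant is indeed $\lesssim\mathop{diam}(K_j)$; but this deserves an explicit sentence rather than being silently absorbed into $C$.
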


\begin{proof}{Proof of Lemma \ref{Kohn_Vogelius.general}}
This lemma is a simple generalization of \cite{G}[Lemma 5.1], where the harmonic functions $\{ \partial_n v_i \}_{i\in I}$ are replaced by a more general collection of functions $\{ g_i \}_{i\in I}$.
\end{proof}


\begin{thebibliography}{10}

{\small

\bibitem{Allaire_arma1}
G.~Allaire, \emph{Homogenization of the {N}avier-{S}tokes equations in open
  sets perforated with tiny holes. {I}. {A}bstract framework, a volume
  distribution of holes}, Arch. Rational Mech. Anal. \textbf{113} (1990),
  no.~3, 209--259.
%
\bibitem{Allaire_arma2}
G.~Allaire, \emph{Homogenization of the Navier-Stokes equations in open sets perforated with tiny holes II: Non-critical sizes of the holes for a volume distribution and a surface distribution of holes}, Arch. Rational Mech. Anal. \textbf{113} 113 (1991),  261--298.

\bibitem{Allaire_porous}
G. Allaire, \emph{Continuity of the Darcy's law in the low-volume fraction limit}, Ann. della scuola Norm. Sup. di Pisa, 18 (4) (1991), 475 -- 499.   


\bibitem{Beliaev_Koslov}
A. Yu. Beliaev and S.M. Koslov, \emph{Darcy equation for random porous media}, Comm. in Pure and App. Mathematics, 49 (1) (1996), 1--34.  

\bibitem{Bruckner_Thomson}
A. M. Bruckner, J. B. Bruckner and B. S. Thomson, \emph{Elementary Real Analysis}, Prentice Hall (Pearson) (2001).



%
\bibitem{Brillard1986-1987}
A.~Brillard, \emph{Asymptotic analysis of incompressible and viscous fluid flow
  through porous media. {B}rinkman's law via epi-convergence methods}, Annales
  de la Facult\'e des sciences de Toulouse : Math\'ematiques \textbf{8}
  (1986-1987), no.~2, 225--252.


\bibitem{Cioranescu_Murat}
D.~Cioranescu and F.~Murat, \emph{A strange term coming from nowhere}, Topics
  in the Mathematical Modelling of Composite Materials. Progress in Nonlinear
  Differential Equations and Their Applications. \textbf{31} (1997), 45--93.

\bibitem{Daley.Jones.book2}
 D.J. Daley and D. Vere-Jones, \emph{An introduction to the theory of point processes. vol.{II}:
  General theory and structures, probability and its applications},
  Springer-Verlag New York (2008).
  
  \bibitem{Desvillettes2008}
L.~Desvillettes, F.~Golse, and V.~Ricci, \emph{The mean-field limit for solid
  particles in a {N}avier-{S}tokes flow}, Journal of Statistical Physics
  \textbf{131} (2008), no.~5, 941--967.
%

\bibitem{Galdi_book}
G. P. Galdi, \emph{An Introduction to the Mathematical Theory of the Navier-Stokes Equations, Steady-State Problems}, Second edition, Springer-Verlag New York (2011).


\bibitem{G}
A.~Giunti, \emph{Convergence rates for the homogenization of the Poisson problem in randomly perforated domains}, ArXiv preprint (2020).

\bibitem{GH}
A.~Giunti and R. H\"ofer, \emph{Homogenization for the Stokes equations in randomly perforated domains under almost minimal assumptions on the size of the holes}, Ann. Inst. H. Poincare'- An. Nonl., \textbf{36}, no. 7 (2019), 1829-1868.

\bibitem{GH_pressure}
A.~Giunti and R. H\"ofer, \emph{Convergence of the pressure in the homogenization of the Stokes equations in randomly perforated domains}, ArXiv preprint (2020).

\bibitem{GHV}
A.~Giunti, R. H\"ofer and J.J. L. Vel\`azquez, \emph{Homogenization for the Poisson equation in randomly perforated domains under minimal assumptions on the size of the holes},
Comm. in PDEs, \textbf{43}, no. 9 (2018), 1377-1412.

\bibitem{Richard_Sebastian}
R. M. H\"ofer, K. Kowalczyk and S. Schwarzacher, \emph{Darcy's law as low mach and homogenization limit of a compressible fluid in perforated domains}, ArXiv preprint (2020).



\bibitem{Hillairet2018}
M.~Hillairet, \emph{On the homogenization of the {S}tokes problem in a
  perforated domain}, Archive for Rational Mechanics and Analysis (2018).

%

\bibitem{LEVY198311}
T.~L\`evy, \emph{Fluid flow through an array of fixed particles}, International
  Journal of Engineering Science \textbf{21} (1983), no.~1, 11 -- 23.



\bibitem{MarchenkoKhruslov}
V.~A. Marchenko and E.~Y. Khruslov, \emph{Homogenization of partial
  differential equations}, Progress in Mathematical Physics (2006), 46, Boston, MA:
  Birkh\"auser Boston, Inc.



\bibitem{Jing_elasticity}
W. Jing, \emph{Layer potentials for Lam\'e systems and homogenization of perforated elastic medium with clamped holes}, ArXiv preprint (2020).

\bibitem{Moller.PVT} 
J. Moller,  \emph{Poisson-Voronoi tessellations}, In Lectures on Random Voronoi Tessellations, Lecture Notes in Statistics, 87 (1994), Springer, New York, NY.


\bibitem{papvar.tinyholes}
G.~C. Papanicolaou and S.~R.~S. Varadhan, \emph{Diffusion in regions with many
  small holes}, Springer Berlin Heidelberg (1980), Berlin, Heidelberg,  pp.~190--206.
.



\bibitem{Rubinstein1986}
J.~Rubinstein, \emph{On the macroscopic description of slow viscous flow past a
  random array of spheres}, J. Statist. Phys. \textbf{44} (1986), no.~5-6,
  849--863. 



\bibitem{SanchezP82}
E.~Sanchez-Palencia, \emph{On the asymptotics of the fluid flow past an array
  of fixed obstacles}, International Journal of Engineering Science - Int. J.
  Eng. Sci. \textbf{20} (1982), 1291--1301.

}

\end{thebibliography}
\end{document}